\title{Obtuse constants of Alexandrov spaces}
\author{Ayato Mitsuishi}
\email{mitsuishi@fukuoka-u.ac.jp}
\address{Department of Applied Mathematics, Fukuoka University, Jyonan-ku, Fukuoka-shi, Fukuoka 814-0180, JAPAN}
\author{Takao Yamaguchi}
\thanks{This work was supported by JSPS KAKENHI Grant Numbers 26287010, 15H05739, 15K17529}
\email{takaoy@math.kyoto-u.ac.jp}
\address{Department of mathematics, Kyoto University, Kitashirakawa, Kyoto 606--8502, JAPAN}
\date{\today}
\subjclass[2010]{53C20, 53C21, 53C23}
\keywords{obtuse constant; normalized volume; Alexandrov space; ideal boundary}
\theoremstyle{plain}
\newtheorem{theorem}{Theorem}[section]
\newtheorem{lemma}[theorem]{Lemma}
\newtheorem{corollary}[theorem]{Corollary}
\newtheorem{definition}[theorem]{Definition}
\newtheorem{remark}[theorem]{Remark}
\newtheorem{problem}[theorem]{Problem}
\newtheorem{example}[theorem]{Example}
\newtheorem{conjecture}[theorem]{Conjecture}
\newcommand{\wangle}[0]{\tilde{\angle}}
\newcommand{\diam}[0]{\mathrm{diam}\,}
\newcommand{\rad}[0]{\mathrm{rad}\,}
\newcommand{\e}[0]{\epsilon}
\newcommand{\p}[0]{\partial}
\begin{document}
\begin{abstract}
	We introduce a new geometric invariant  called 
	the obtuse constant of spaces with curvature bounded below.
	We first find relations between this invariant and the normalized volume.
	We also discuss the case of maximal obtuse constant equal to $\pi/2$,
	where we prove some rigidity for spaces. Although we consider Alexandrov spaces with curvature 
	bounded below, the results are new even in the Riemannian case.
\end{abstract}
\maketitle


\section{Introduction}

In the present paper, we introduce a new geometric invariant called 
the obtuse constant of a space  with curvature bounded below, and investigate the relations 
between this invariant and the normalized volume. 

For this invariant, there are some historical backgrounds.
For positive integer $n$ and $D, v>0$, let $\mathcal M(n,D, v)$ denote the
family of $n$-dimensional closed Riemannian manifolds with
sectional curvature $\ge -1$, diameter $\le D$ and volume $\ge v$.
In \cite{Ch}, Cheeger proved that for every $M\in\mathcal M(n, D,  v)$, the length of 
every periodic closed geodesic has length $\ge \ell_{n,D}(v)>0$ 
for some uniform constant $\ell_{n,D}(v)$. 
In \cite{GP}, Grove and Petersen extended Cheeger's argument as follows: There are positive constants 
$\delta=\delta_{n, D}(v)$ and $\e=\e_{n,D}(v)$ such that
for every $M\in\mathcal M(n,D,v)$ and for every distinct $p,q\in M$ with 
distance $|p,q|<\delta$, either $q$ is $\e$-regular to $p$, or 
$p$ is $\e$-regular to $q$.
Those results  were keys to control 
local geometry of the space, and brought a significant results, topological finiteness 
of Riemannian manifolds (see  \cite{Ch}, \cite{GP},\cite{GPW}).

In this paper, we do not need to restrict ourselves to 
Riemannian manifolds. Let $M$ be a complete  Alexandrov space with curvature $\ge \kappa$.
For two points $p,q$ of $M$, $\Uparrow_p^q$ denotes the 
set of all directions at $p$ of all minimal geodesics from $p$ to $q$. We also use the 
symbol $\uparrow_p^q$ to indicate $\uparrow_p^q\,\in\, \Uparrow_p^q$.

Let $\mathcal A(n,D,v)$ denote the
family of $n$-dimensional compact  Alexandrov spaces with curvature
$\ge -1$, diameter $\le D$ and volume $\ge v$.
Grove and Petersen's result mentioned above still holds for Alexandrov spaces 
(see \cite{PerPet:ext}).
This implies that for every $M\in\mathcal A (n,D, v)$ and for every distinct $p,q\in M$ with 
$|p,q|<\delta$ there exists a point  $x \in M$ such that 
either 
$\angle(\Uparrow_p^q, \uparrow_p^x)    >\pi/2+\e$ or $\angle(\Uparrow_q^p, \uparrow_q^x) >\pi/2+\e$.
However such a point $x$ was assumed to be close to those points $p$ or $q$ in general.
As we see later, if one can take such a point $x$ relatively far away from $p$ or $q$, it will be useful in 
some situations.

The above is a motivation to our invariants, which we are going to define in detail. 
First we suppose that $M$  is compact. Let $R=R_M={\rm rad}(M)$ be the radius of $M$:
\[
R_p := \sup_{q\in M} |p,q|, \qquad  R := \inf_{p\in M}\, R_p.
\]
For $p\neq q\in M$,   set
\[
{\rm ob}(p;q):= \sup_{x\in B(p,R/2)^c}    \angle(\Uparrow_p^q, \uparrow_p^x)   -\pi/2, 
\]
which we call the {\it  obtuse constant of $\{ p,q\} $ at $p$}, and
define the {\it obtuse constant at $p$ and $q$} by
\[
{\rm ob}(p,q):=  \max\{  {\rm ob}(p;q),     {\rm ob}(q;p) \}.
\]
Finally we define  
the {\it obtuse constant} ${\rm ob}(M)$ of $M$ as 
\[
{\rm ob}(M):= \liminf_{|p,q|\to 0}  {\rm ob}(p,q) \in [0, \pi/2].
\]
%

\begin{theorem}\label{thm:main-cpt1}
	There exists a uniform positive constant  $\e_{n,D}(v)$ such that 
	\[
	{\rm ob}(M)>\e_{n,D}(v)
	\]
	for every $M\in \mathcal A(n,D,v)$.
	
	More precisely, there exists also a positive constant $\delta_{n,D}(v)$
	such that if $M\in \mathcal A(n,D,v)$ and $p,q$ are distinct points of $M$ with 
	$|p,q|<\delta_{n,D}(v)$, then ${\rm ob}(p,q)>\e_{n,D}(v)$.
\end{theorem}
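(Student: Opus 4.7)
The plan is to argue by contradiction via a blow-up at $p_i$ with rescaling by $r_i^{-1} = |p_i,q_i|^{-1}$, passing to a pointed Gromov--Hausdorff limit, and exploiting the rigidity of the resulting tangent cone. Suppose the theorem fails: there exist $M_i \in \mathcal{A}(n,D,v)$ and distinct points $p_i, q_i \in M_i$ with $r_i := |p_i, q_i| \to 0$ and $\mathrm{ob}(p_i, q_i) \to 0$. By Gromov's precompactness applied to $\mathcal{A}(n,D,v)$, together with noncollapsing and dimension preservation from the volume bound, a subsequence satisfies $M_i \to X$ in Gromov--Hausdorff topology with $X \in \mathcal{A}(n,D,v)$ of dimension $n$, and $p_i, q_i \to p_* \in X$ since $r_i \to 0$. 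Bishop--Gromov yields a uniform lower bound $R_{M_i} \geq r_0(n,D,v) > 0$. Rescaling by $r_i^{-1}$ and extracting a further subsequence (with a diagonal extraction matching the rate of $M_i \to X$ against $r_i$) produces
\[
(r_i^{-1} M_i, p_i, q_i) \longrightarrow (Y, p_\infty, q_\infty),
\]
and identifies $Y$ with the tangent cone $Y = T_{p_*} X = C(\Sigma_{p_*} X)$, an $n$-dimensional nonnegatively curved metric cone with apex $p_\infty$ and $|p_\infty, q_\infty| = 1$.

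Because $R_{M_i}/(2r_i) \to \infty$, the assumption $\mathrm{ob}(q_i; p_i) \to 0$ passes to the limit as: for every $y \in Y$ at sufficiently large distance from $q_\infty$, $\angle(\Uparrow_{q_\infty}^{p_\infty}, \uparrow_{q_\infty}^y) \leq \pi/2$. To contradict this, write $q_\infty = (\xi, 1)$ for some $\xi \in \Sigma_{p_*} X$ and consider the radial half-line $y_t := (\xi, t)$, which passes through $p_\infty$ at $t = 0$ and through $q_\infty$ at $t = 1$, continuing to infinity for $t > 1$. For such $t$, the segment $[q_\infty, y_t]$ is the radial continuation of $[p_\infty, q_\infty]$, so $\uparrow_{q_\infty}^{y_t} = -\uparrow_{q_\infty}^{p_\infty}$ and hence $\angle(\uparrow_{q_\infty}^{p_\infty}, \uparrow_{q_\infty}^{y_t}) = \pi$. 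Choosing $t$ large enough that $|q_\infty, y_t| = t - 1$ exceeds the ``far from $q_\infty$'' threshold then gives $\pi \leq \pi/2$, the desired contradiction.

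The main obstacle is identifying the blow-up limit with a genuine tangent cone. A priori the rate of $M_i \to X$ is unrelated to the rescaling rate $r_i$, so the direct blow-up of $r_i^{-1} M_i$ need not be a metric cone; a Toponogov-comparison argument alone yields only the weaker conclusion that every ray from $p_\infty$ is initially orthogonal to $\uparrow_{p_\infty}^{q_\infty}$ (and symmetrically at $q_\infty$), which by itself does not exclude the configuration (as in a product such as $\mathbb{R}^k \times Y_0$ with compact $Y_0$). A careful diagonal subsequence making the two convergence rates compatible restores the full radial rigidity of the cone and forces the contradiction, and this is where the noncollapsing and the uniform lower bound $R_{M_i} \geq r_0$ play essential roles.
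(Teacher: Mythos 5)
Your blow-up at scale $r_i=|p_i,q_i|$ cannot see the hypothesis you are trying to contradict, and this is a fatal gap. By definition, ${\rm ob}(p_i;q_i)$ and ${\rm ob}(q_i;p_i)$ constrain the angles $\angle(\Uparrow_{q_i}^{p_i},\uparrow_{q_i}^{x})$, $\angle(\Uparrow_{p_i}^{q_i},\uparrow_{p_i}^{x})$ only for points $x$ outside $B(q_i,R_{M_i}/2)$, resp.\ $B(p_i,R_{M_i}/2)$. In the rescaled spaces $r_i^{-1}M_i$ these admissible points lie at distance $\ge R_{M_i}/(2r_i)\to\infty$ from the base points, so they escape to infinity and impose no condition at finite distance in the pointed limit $(Y,p_\infty,q_\infty)$; your claim that the assumption passes to the limit as ``for every $y\in Y$ far from $q_\infty$ one has $\angle(\Uparrow_{q_\infty}^{p_\infty},\uparrow_{q_\infty}^{y})\le\pi/2$'' is unjustified. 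Symmetrically, the point $y_t$ on the radial continuation of $p_\infty q_\infty$ that you use for the contradiction corresponds to points of $M_i$ at distance about $t\,r_i\to 0$ from $q_i$, which are \emph{not} in $B(q_i,R_{M_i}/2)^c$; an angle equal to $\pi$ there contradicts nothing, since such nearby points are precisely what the definition of ${\rm ob}$ excludes (whenever the geodesic $pq$ extends beyond $q$ one trivially gets angle $\pi$ at $q$, which is why the invariant takes the supremum only over far points). To reach admissible points you would need $t\ge R_{M_i}/(2r_i)$, i.e.\ control of the geometry at the macroscopic scale $R_{M_i}/2$, which the blow-up limit does not determine; no diagonal extraction can arrange this, because both $r_i$ and the Gromov--Hausdorff approximation scale of $M_i\to X$ are dictated by the putative counterexample sequence and are not at your disposal. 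For the same reason the identification $Y=T_{p_*}X$, hence the cone structure and radial rigidity on which the whole argument rests, is unjustified: when the spaces $M_i$ vary, the blow-up limit is merely an $n$-dimensional Alexandrov space of nonnegative curvature, not a cone (attach to a fixed space a bump of size $r_i$ at $p_i$), and the ``weaker conclusion'' you mention as a fallback does not yield the theorem either.

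The statement is genuinely global and quantitative, and the paper proves it by a volume argument in the spirit of Cheeger and Grove--Petersen rather than by infinitesimal rigidity. Under the negation, a thin-triangle estimate in $\mathbb M_{-D^2}$ converts the two one-sided bounds at $p$ and $q$ into the two-sided bound $|\angle(\Uparrow_p^q,\uparrow_p^x)-\pi/2|\le 7\epsilon/10$ for all $x$ with $|x,p|>R_M/2+\delta$; hence $M\setminus B(p,R_M/2+\delta)$ is contained in the image under the $1$-Lipschitz map $\mathrm{gexp}_p^{-D^2}$ of the set of vectors in $T_p^{-D^2}M$ with radius in $(R_{\min}/2,1]$ and direction whose angle to $\Uparrow_p^q$ lies within $\epsilon$ of $\pi/2$. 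By the Bishop--Gromov type inequality on $\Sigma_p$ (Lemma \ref{lem:theta}) this image has measure at most $C_1\,\mathcal H^{n-1}(\mathbb S^{n-1})\,\theta_n(\epsilon)$, which for small $\epsilon$ is less than the uniform lower bound $V_1>0$ for $\mathcal H^n(M\setminus B(p,R_M/2))$ on the moduli space --- a contradiction. This quantitative use of the volume bound $v$ (to bound from below the measure of the far region, not merely to prevent collapse) is the mechanism your proposal is missing.
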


This generalizes the result of Grove and Petersen as stated before.

The converse to Theorem \ref{thm:main-cpt1} is also true.
Let $\mathcal A(n,D)$  denote the
family of $n$-dimensional compact  Alexandrov spaces with curvature
$\ge -1$ and diameter $\le D$.
Notice that the obtuse constant is rescaling invariant.
Therefore for $M\in \mathcal A(n,D)$ it is natural to compare ${\rm ob}(M)$ with
the {\it normalized volume}  by the diameter  defined as
\[
\tilde v(M) := \frac{\mathcal{H}^n(M)}{(\diam(M))^n},
\]
where $\mathcal{H}^n(M)$ denotes the $n$-dimensional Hausdorff measure of $M$.
%
%

\begin{theorem}\label{thm:main-cpt2}
	There exists a positive continuous function $C_{n,D}(\e)$ with $\lim_{\e\to 0} C_{n,D}(\e)=0$
	such that for every $M\in \mathcal A(n,D)$, we have  
	\[
	{\rm ob}(M) < C_{n,D}(\tilde v(M)).
	\]
\end{theorem}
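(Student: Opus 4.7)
I would prove Theorem \ref{thm:main-cpt2} by contradiction, combining Gromov--Hausdorff precompactness with the collapsing theory of Alexandrov spaces. Suppose the assertion fails: there exist $\epsilon_0>0$ and a sequence $M_k\in\mathcal A(n,D)$ with $\tilde v(M_k)\to 0$ but $\mathrm{ob}(M_k)\ge\epsilon_0$. Since both invariants are scale-invariant, rescale so that $\diam M_k=D$; then $\Hil^n(M_k)=\tilde v(M_k)\cdot D^n\to 0$ and, by Gromov's precompactness theorem, a subsequence converges in Gromov--Hausdorff distance to a compact Alexandrov space $X$ with curvature $\ge-1$ and $m:=\dim X<n$, giving a strict collapse. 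Moreover $\rad M_k\ge D/2$, so in the definition of the obtuse constant one always has $|p,x|\ge D/4$ whenever $x\in B(p,R_{M_k}/2)^c$.

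The plan is to produce, for each large $k$, an entire family of pairs $(p_k,q_t)$ in $M_k$ with $|p_k,q_t|\searrow 0$ along which $\mathrm{ob}(p_k,q_t)\le\eta_k$, where $\eta_k\to 0$ as $k\to\infty$; this will force $\mathrm{ob}(M_k)\le\eta_k$, contradicting the hypothesis. Fix a regular point $p_\infty\in X$ whose tangent cone is $\R^m$. By the fibration theorem for collapsing sequences of Alexandrov spaces (Yamaguchi's fibration theorem, extended to the Alexandrov category via strainer constructions and the Perelman--Petrunin gradient flow), for large $k$ there is an $o(1)$-almost submetry $f_k$ from a neighbourhood $U_k\subset M_k$ of a point $p_k$ converging to $p_\infty$, onto a neighbourhood of $p_\infty$ in $X$, whose fibres have diameter $\delta_k\to 0$. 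The fibre $F_k=f_k^{-1}(f_k(p_k))$ through $p_k$ has positive local dimension $n-m\ge 1$, so one can choose $q_t\in F_k$ with $|p_k,q_t|=t$ for every $t\in(0,\delta_k)$.

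The geometric heart of the argument is the horizontal-vs.-vertical estimate: for every such $q_t\in F_k$ and every $x\in B(p_k,R_{M_k}/2)^c$,
\[
\bigl|\angle(\uparrow_{p_k}^{q_t},\uparrow_{p_k}^{x})-\pi/2\bigr|\le\eta_k,
\]
with $\eta_k\to 0$ uniformly in $t$ and $x$. Indeed, $|p_k,x|\ge D/4$ forces $f_k(x)$ to stay at distance $\ge D/4-o(1)$ from $p_\infty$, so any minimal geodesic from $p_k$ to $x$ is nearly length-preserved under $f_k$ and projects, up to subsequence, to a minimal geodesic from $p_\infty$ to $f_k(x)$ in $X$. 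Hence $\uparrow_{p_k}^{x}$ is nearly horizontal and makes angle $\pi/2+o(1)$ with every direction tangent to the fibre $F_k$; in particular with $\uparrow_{p_k}^{q_t}$. The symmetric argument at $q_t$ yields $\mathrm{ob}(q_t;p_k)\le\eta_k$, so $\mathrm{ob}(p_k,q_t)\le\eta_k$ for every $t\in(0,\delta_k)$, giving $\mathrm{ob}(M_k)\le\eta_k$ and the desired contradiction. The main obstacle is this horizontal-vs.-vertical step: in the Riemannian category it reduces to Fukaya's fibration theorem, but in the Alexandrov setting one needs the more intricate strainer and gradient-flow machinery to produce $f_k$ with the regularity required to control the angles of geodesic lifts uniformly as $x$ varies over $B(p_k,R_{M_k}/2)^c$.
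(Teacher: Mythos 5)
Your overall strategy coincides with the paper's: argue by contradiction, rescale so the diameter is fixed, extract a collapsing subsequence $M_k\to X$ with $1\le m=\dim X<n$, apply the almost Lipschitz submersion theorem of \cite{Y:conv} near a strained point of $X$, and kill the obtuse constant by showing that for two nearby points in one fibre the directions to far points are almost horizontal, hence make angle $\pi/2+o(1)$ with the almost vertical direction joining the two fibre points. That is exactly the skeleton of the proof in Section \ref{sec:collapse}.

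There is, however, a genuine gap at the step where you produce the pairs $(p_k,q_t)$. You assert that the fibre $F_k=f_k^{-1}(f_k(p_k))$ through your chosen point $p_k$ ``has positive local dimension $n-m\ge 1$'', and even that it contains points $q_t$ with $|p_k,q_t|=t$ for every small $t$. In the Alexandrov category this is not known: $f_k$ is only an almost Lipschitz submersion, not a fibration; its fibres need not be connected, and whether \emph{every} fibre has positive $(n-m)$-dimensional Hausdorff measure is precisely the open Problem recorded in the paper right after the proof of Theorem \ref{thm:main-cpt2} (the Riemannian picture of Fukaya-type fibrations with infranilmanifold fibres does not transfer). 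The paper closes this hole with the coarea formula: since the coarea factor of $f_i$ is $1+O(\tau)$ and $\mathcal H^n(U_i)>0$, the set $B_0$ of base points $y$ with $\mathcal H^{n-m}(f_i^{-1}(y))>0$ is dense in the base ball; for $y_0\in B_0$ the fibre is an uncountable compact set and therefore contains pairs of distinct points at arbitrarily small distance, which is what the $\liminf_{|p,q|\to 0}$ in the definition of ${\rm ob}(M_i)$ requires. You need this (or some substitute) to make your construction of $q_t$ legitimate. A secondary slip: you justify horizontality of $\uparrow_{p_k}^{x}$ by projecting the geodesic $p_kx$ under $f_k$, but $f_k$ is defined only on a small neighbourhood $U_k$, so $f_k(x)$ need not exist for $|p_k,x|\ge R_{M_k}/2$; the correct statement is intrinsic, namely that $\uparrow_{p_k}^{x}$ lies in the set $H_{p_k}$ of directions to points at distance $\ge\mu_0$, and Lemma 4.11 of \cite{Y:conv} gives $|\angle(\uparrow_{p_k}^{q},H_{p_k})-\pi/2|<\tau$ for $q$ in the same fibre, which is how the paper argues.
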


In the case of nonnegative curvature, as an immediate consequence of Theorems \ref{thm:main-cpt1} 
and \ref{thm:main-cpt2},   we have 

\begin{corollary} \label{cor:cpt3}
	There exist positive continuous functions $\e_n(t)$ and  $C_n(t)$ with 
	$\displaystyle{\lim_{t \to 0} \e_n(t) = \lim_{t \to 0} C_n(t) = 0}$ 
	such that for every compact Alexandrov $n$-space $M$ of nonnegative curvature, we have 
	\[
	\e_n(\tilde v(M)) \le \mathrm{ob}(M) \le C_n(\tilde v(M)).
	\]
\end{corollary}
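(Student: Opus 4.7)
The plan is to deduce the corollary essentially formally from Theorems~\ref{thm:main-cpt1} and~\ref{thm:main-cpt2} via rescaling. Both $\mathrm{ob}(M)$ and $\tilde v(M) = \mathcal H^n(M)/\diam(M)^n$ are invariant under multiplying the metric by a positive constant, so for a compact Alexandrov $n$-space $M$ of nonnegative curvature I would first rescale to arrange $\diam(M) = 1$. Nonnegativity of curvature is preserved, so the rescaled space still has curvature $\ge 0 \ge -1$, and it therefore lies in the class $\mathcal A(n,1,\tilde v(M))$ with the diameter bound now fixed at the universal value $D = 1$. This is the only place where the curvature sign matters; it lets me apply both main theorems with a common, $M$-independent choice of $D$.

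With this reduction the upper bound is immediate: Theorem~\ref{thm:main-cpt2} applied with $D = 1$ yields $\mathrm{ob}(M) < C_{n,1}(\tilde v(M))$, and I simply take $C_n := C_{n,1}$, which by that theorem is continuous, positive, and tends to $0$ at $0$.

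For the lower bound, Theorem~\ref{thm:main-cpt1} applied with $D=1$ delivers, for each $v > 0$, a positive number $\e_{n,1}(v)$ with $\mathrm{ob}(M) > \e_{n,1}(\tilde v(M))$ whenever the rescaled space belongs to $\mathcal A(n,1,v)$; but the statement does not advertise continuity in $v$. Since $\mathcal A(n,1,v)$ is monotone nonincreasing in $v$, the function $v \mapsto \e_{n,1}(v)$ can be chosen nondecreasing and positive on $(0,\infty)$. A routine smoothing, for example
\[
\e_n(v) \;:=\; \frac{1}{v}\int_0^v \min\bigl(u,\,\e_{n,1}(u)\bigr)\,du,
\]
then furnishes a continuous, nondecreasing, positive function with $\e_n(v) \le \e_{n,1}(v)$ and $\e_n(v) \le v/2 \to 0$ as $v \to 0$, which is exactly the lower-bound constant required by the corollary.

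I do not anticipate any real obstacle. The only non-trivial point is arranging continuity of the lower-bound function, and that is handled by the standard averaging regularization above; the substantive geometric content is entirely contained in the two theorems, which the nonnegative curvature assumption allows me to apply uniformly by scaling the diameter to $1$.
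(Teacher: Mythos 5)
Your proposal is correct and follows essentially the same route as the paper: rescale to diameter one (so nonnegative curvature places $M$ in $\mathcal A(n,1,\tilde v(M))$), take the upper bound directly from Theorem \ref{thm:main-cpt2} with $D=1$, and regularize the lower-bound constant from Theorem \ref{thm:main-cpt1} into a continuous function. The only cosmetic difference is the smoothing device — your averaged $\min(u,\e_{n,1}(u))$ (whose cap by $v/2$ also gives the limit $0$ at $0$ without invoking Theorem \ref{thm:main-cpt2}) versus the paper's step-function plus piecewise-linear interpolation of $\inf\{\mathrm{ob}(M):\tilde v(M)\ge \tilde v\}$ — which is immaterial.
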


From Theorems \ref{thm:main-cpt1}, \ref{thm:main-cpt2} and Corollary \ref{cor:cpt3}, we conclude 
that there is a strong relation between the obtuse constant and the normalized volume.

%
%


Next we discuss the noncompact case. Suppose that   $M$ is noncompact complete
Alexandrov space with curvature $\ge \kappa$\, $(\kappa\le 0)$.
Set 
\[
{\rm ob}_{\infty}(p,q):=  \limsup_{x\to \infty}\max\{\angle(\Uparrow_p^q, \uparrow_p^x),   
\angle(\Uparrow_q^p, \uparrow_q^x) \}-\pi/2,
\]
which we call the {\it obtuse constant at $p$ and $q$  from infinity}. We define  
the {\it obtuse constant ${\rm ob}_{\infty }(M)$ of $M$ from infinity} as 
\[
{\rm ob}_{\infty}(M):= \liminf_{|p,q|\to 0}  \,  {\rm ob}_{\infty}(p,q)\in [0, \pi/2]
\]

In the geometry of complete noncompact spaces with nonnegative curvature,
the notion of  asymptotic cone  or volume growth rate plays 
an important role. For instance, any complete noncompact Riemannian 
manifold with nonnegative curvature having maximal volume growth 
is known to be diffeomorphic to an Euclidean space.

Let $M$ be an $n$-dimensional   complete noncompact
Alexandrov space with curvature $\ge 0$, and for any fixed $p\in M$, let 
\[
v_{\infty}(M) := \lim_{R\to\infty} \, \frac{\mathcal{H}^n (B(p,R))}{R^n}
\]
be the volume growth rate of $M$.

As a noncompact version of Theorems \ref{thm:main-cpt1} and  \ref{thm:main-cpt2},
we have the following:

\begin{theorem}\label{thm:main-noncpt}
	There exist  continuous increasing functions $\e_n$ and  $C_n$  with 
	$\e_n(0) = C_n(0) = 0$ 
	such that for every complete noncompact Alexandrov $n$-space with 
	nonnegative curvature, we have 
	\[
	\e_n(v_{\infty}(M)) \le \mathrm{ob}_{\infty}(M) \le C_n(v_{\infty}(M)).
	\]
	In particular, $v_{\infty}(M)=0$ if and only if $ {\rm ob}_{\infty}(M)=0$.
\end{theorem}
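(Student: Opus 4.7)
The plan is to reduce Theorem \ref{thm:main-noncpt} to the compact results, Theorems \ref{thm:main-cpt1} and \ref{thm:main-cpt2}, applied to the cross-section $\Sigma$ of the asymptotic cone $K_\infty$ of $M$. Since $M$ has nonnegative curvature, $K_\infty$ exists as the pointed Gromov--Hausdorff limit $\lim_{r\to\infty}(M, p, d_M/r)$ and is an Alexandrov cone $K_\infty = C(\Sigma)$ over a compact $(n-1)$-dimensional Alexandrov space $\Sigma$ of curvature $\ge 1$ and $\diam\Sigma \le \pi$; assuming $v_\infty(M) > 0$, one has $v_\infty(M) = \mathcal{H}^{n-1}(\Sigma)/n$ by Bishop--Gromov. (The case $v_\infty(M) = 0$ is trivial since $\e_n(0) = C_n(0) = 0$.) Thus $\Sigma \in \mathcal A(n-1, \pi, \mathcal{H}^{n-1}(\Sigma))$.

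The heart of the argument is a two-sided comparison $\mathrm{ob}_\infty(M) \approx \mathrm{ob}(\Sigma)$ up to an $o(1)$-error. In one direction, for any pair $\bar p, \bar q \in \Sigma$ close together and any direction $\bar x \in \Sigma$ witnessing an obtuse spherical angle at $\bar p$, one lifts $\bar p, \bar q$ to nearby $p, q \in M$ (with $\uparrow_p^q$ matching the spherical direction) and lifts $\bar x$ to a point $x \in M$ far along the ray from $p$ asymptotic to $\bar x$; Toponogov comparison together with the cone formula $d_{K_\infty}((s, \xi), (R, \eta))^2 = s^2 + R^2 - 2sR\cos d_\Sigma(\xi, \eta)$ then yields the corresponding obtuse angle in $M$. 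Conversely, given $p, q \in M$ realizing the obtuse-at-infinity condition with witness $x$, rescaling by $1/|p, x|$ and passing to the limit translates the data into an obtuse spherical configuration in $\Sigma$.

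Using this comparison, the lower bound $\e_n(v_\infty(M)) \le \mathrm{ob}_\infty(M)$ follows by applying Theorem \ref{thm:main-cpt1} to $\Sigma$, yielding $\mathrm{ob}(\Sigma) \ge \e_{n-1,\pi}(\mathcal{H}^{n-1}(\Sigma))$, and transferring back; the upper bound $\mathrm{ob}_\infty(M) \le C_n(v_\infty(M))$ follows by applying Theorem \ref{thm:main-cpt2} to $\Sigma$, giving $\mathrm{ob}(\Sigma) \le C_{n-1,\pi}(\tilde v(\Sigma))$, together with the estimate $\tilde v(\Sigma) = \mathcal{H}^{n-1}(\Sigma)/(\diam\Sigma)^{n-1} \ge n v_\infty(M)/\pi^{n-1}$ on the volume side.

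The main obstacle is rigorously establishing the comparison $\mathrm{ob}_\infty(M) \approx \mathrm{ob}(\Sigma)$ with uniform $o(1)$-errors, since $\uparrow_p^q \in \Sigma_p M$ is not directly a direction in $\Sigma = \Sigma_{o_\infty} K_\infty$: geodesic segments from $p$ collapse in the asymptotic cone. The correspondence must therefore be established via a two-scale analysis, where the microscale $|p, q|$ controls local angles while the macroscale $|p, x| \to \infty$ controls rays to infinity; Toponogov comparison in nonnegative curvature provides the needed angle bounds, but the scales must be taken in the right order (first $|p, x| \to \infty$, then $|p, q| \to 0$). A further technical point is handling non-regular points of $\Sigma$ and $M$, which will likely require choosing $\bar p, \bar q \in \Sigma$ at generic (regular) points, or invoking a pointed Gromov--Hausdorff compactness argument to obtain uniform estimates.
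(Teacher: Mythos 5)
Your reduction to the cross-section $\Sigma=M(\infty)$ of the asymptotic cone does not work, because the claimed two-sided comparison $\mathrm{ob}_\infty(M)\approx \mathrm{ob}(\Sigma)$ is false. Already the flat cone $M$ over a circle of small length $\ell$ (smoothed at the vertex if you wish) is a counterexample: here $\Sigma$ is a circle, so $\mathrm{ob}(\Sigma)=\pi/2$ for every $\ell$, while $v_\infty(M)=\ell/2$ is small and, for a pair $p,q$ separated in the angular direction, every direction $\uparrow_p^x$ with $x$ far away lies within about $\ell/2$ of the outward radial direction, so $\mathrm{ob}_\infty(p,q)\lesssim \ell$ and hence $\mathrm{ob}_\infty(M)\lesssim\ell$. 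The underlying reason is the one you flag but do not resolve: $\uparrow_p^q$ is a micro-scale direction in $\Sigma_pM$, whereas the set of directions to far points is governed by the function $d_\Sigma(\bar p,\cdot)$ (in a Euclidean cone the angle at $\bar p$ between the outward radial direction and a far point on the ray through $\bar x$ tends to $d_\Sigma(\bar p,\bar x)$), not by the intrinsic obtuse constant of $\Sigma$; there is no correspondence between pairs $\bar p,\bar q\in\Sigma$ with $d_\Sigma(\bar p,\bar q)\to 0$ and pairs $p,q\in M$ with $|p,q|\to 0$. Even granting some comparison, your bookkeeping for the upper bound goes the wrong way: Theorem \ref{thm:main-cpt2} applied to $\Sigma$ gives $\mathrm{ob}(\Sigma)\le C_{n-1,\pi}(\tilde v(\Sigma))$, but $\tilde v(\Sigma)$ is scale-invariant and need not be small when $v_\infty(M)=\mathcal H^{n-1}(\Sigma)/n$ is small (in the example $\tilde v(\Sigma)=2$ is maximal), and your inequality $\tilde v(\Sigma)\ge n v_\infty(M)/\pi^{n-1}$ points in the useless direction. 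Finally, the case $v_\infty(M)=0$ is not trivial for the upper bound: the statement forces $\mathrm{ob}_\infty(M)=0$ there, which is precisely the collapsing situation your scheme cannot reach since $\Sigma$ is then lower-dimensional.

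For comparison, the paper argues directly in $M$ at finite scales. For the lower bound it applies the ball version of Corollary \ref{cor:cpt3} (Remark \ref{rem:ball}) to the rescaled balls $\frac1R B(p,R)$, which have normalized volume $\ge v_\infty(M)/2$ for large $R$: for any fixed $x\ne y$ this produces witnesses $z$ in the annulus $B(p,R)\setminus B(p,R/2)$ with angle $\ge \pi/2+\e_n(v)$, and letting $R\to\infty$ gives the (stronger) bound $\inf_{x\ne y}\mathrm{ob}_\infty(x,y)\ge\e_n(v_\infty(M))$. For the upper bound it argues by contradiction: if $v_\infty(M_i)\to 0$ but $\mathrm{ob}_\infty(M_i)\ge\e$, then suitable rescalings $\frac1{R_i}M_i$ collapse to a lower-dimensional limit, and the Lipschitz submersion theorem plus the coarea formula, exactly as in the proof of Theorem \ref{thm:main-cpt2}, produce close pairs $p,q$ for which all angles to far points are near $\pi/2$. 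If you want to salvage your outline, it would have to be rebuilt along these lines rather than through $\mathrm{ob}(\Sigma)$.
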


Finally we consider the maximal case of the obtuse constants equal to $\pi/2$.
We need to define a variant of the notion on the  injectivity radius.
For an Alexandrov space  $M$ with curvature bounded below, 
%
let us denote by $1$-${\rm inj}(M)$
the supremum of $r\ge 0$ such that  for every $p\in M$ and  every direction $\xi \in \Sigma_p$ at $p$
there exists  a  minimal geodesic $\gamma$ starting from $p$ in the direction of 
at least one of $\xi$ or  the opposite $-\xi$ (if any)of length $\ge r$.
We call $1$-${\rm inj}(M)$ the {\it one-side injectivity radius of $M$}.
It should be noted that if $p\in\p M$ and $\xi\in\Sigma_p\setminus\p \Sigma_p$, then
the opposite $-\xi$ does not exist, and therefore there always exists a minimal 
geodesic in the direction $\xi$ of length $\ge r$.
We have the following rigidity:

\begin{theorem} \label{thm:maximal(intro)}
	If a compact Alexandrov space  $M$ with curvature $\ge \kappa$ and radius $R$ has 
	$\mathrm{ob}(M) = \pi/2$, then  
	$1$-${\rm inj}(M) \ge R/2$.
\end{theorem}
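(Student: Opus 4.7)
The plan is to argue by contradiction. Suppose there exist $p \in M$ and $\xi \in \Sigma_p$ with $\ell_\xi < R/2$ and $\ell_{-\xi} < R/2$, where $\ell_\xi$ is the supremum of lengths of minimal geodesics from $p$ with initial direction $\xi$ and $\ell_{-\xi} := 0$ if $-\xi \notin \Sigma_p$; by compactness of $M$ this supremum is attained by some maximal $\gamma_\xi : [0, \ell_\xi] \to M$. Fix $\ell_j \to 0^+$ and set $q_j := \gamma_\xi(\ell_j/2)$, so that for $j$ large $\gamma_\xi|_{[0, \ell_j/2]}$ is the unique minimal geodesic from $p$ to $q_j$, giving $\xi \in \Uparrow_p^{q_j}$ and $-\gamma_\xi'(\ell_j/2) \in \Uparrow_{q_j}^p$. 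Since $\mathrm{ob}(M) = \pi/2$, there is $\e_j \to 0^+$ with $\mathrm{ob}(p, q_j) > \pi/2 - \e_j$, and after extracting a subsequence one of the two terms in $\mathrm{ob}(p, q_j) = \max\{\mathrm{ob}(p; q_j), \mathrm{ob}(q_j; p)\}$ exceeds $\pi/2 - \e_j$ for every $j$.

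The first case $\mathrm{ob}(p; q_j) > \pi/2 - \e_j$ yields $x_j$ with $|p, x_j| \ge R/2$ and $\angle(\xi, \uparrow_p^{x_j}) > \pi - \e_j$. Passing to further subsequences $x_j \to x$ and $\uparrow_p^{x_j} \to \zeta$, continuity of $\angle$ forces $\angle(\xi, \zeta) = \pi$, so $\zeta = -\xi$ exists and the limit of the minimal geodesics from $p$ to $x_j$ is a minimal geodesic from $p$ to $x$ of length $\ge R/2$ with initial direction $-\xi$, contradicting $\ell_{-\xi} < R/2$. Note that if $-\xi \notin \Sigma_p$ to begin with, this case cannot hold for large $j$, so one is automatically in the substantive second case.

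The substantive case is $\mathrm{ob}(q_j; p) > \pi/2 - \e_j$, which produces $x_j$ with $|q_j, x_j| \ge R/2$ and, using $-\gamma_\xi'(\ell_j/2) \in \Uparrow_{q_j}^p$, a minimal geodesic $\tau_j : [0, |q_j, x_j|] \to M$ from $q_j$ to $x_j$ whose initial direction satisfies $\angle(\tau_j'(0), \gamma_\xi'(\ell_j/2)) < \e_j$. The aim is to find a subsequential limit $\tau_\infty$ that is a minimal geodesic from $p$ extending $\gamma_\xi$ strictly beyond length $\ell_\xi$, contradicting maximality. The main obstacle is that the initial direction of $\tau_j$ lives in $\Sigma_{q_j}$ with moving basepoint, so the usual continuity of initial directions for minimal geodesics emanating from a \emph{fixed} point cannot be invoked directly.

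I would overcome this by comparing $\tau_j$ with the genuine extension $\beta_j(s) := \gamma_\xi(\ell_j/2 + s)$ for $s \in [0, \ell_\xi - \ell_j/2]$, which is itself a minimal geodesic from $q_j$ with initial direction exactly $\gamma_\xi'(\ell_j/2)$. Hinge comparison in $\mathbb{M}_\kappa^2$ applied to the isosceles hinge at $q_j$ formed by $\tau_j$ and $\beta_j$, with two sides of length $s$ and included angle $< \e_j$, yields $|\tau_j(s), \beta_j(s)| \to 0$ as $\e_j \to 0$, uniformly for $s$ in compact subsets of $[0, \ell_\xi)$. Extracting further subsequences with $x_j \to x$ and $\tau_j \to \tau_\infty$ uniformly by Arzel\`a--Ascoli, the limit curve $\tau_\infty$ is a minimal geodesic of length $|p, x| \ge R/2$ by lower semicontinuity of length, and the pointwise convergence $\tau_j(s) \to \gamma_\xi(s)$ on $[0, \ell_\xi)$ extends by continuity to give $\tau_\infty|_{[0, \ell_\xi]} = \gamma_\xi$. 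In particular $\tau_\infty'(0) = \gamma_\xi'(0) = \xi$, so $\tau_\infty$ is a minimal geodesic from $p$ in direction $\xi$ of length $\ge R/2 > \ell_\xi$, contradicting the definition of $\ell_\xi$ as the supremum.
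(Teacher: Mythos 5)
Your core dichotomy is exactly the paper's argument for Theorem \ref{thm:maximal}: place $q_j$ on a geodesic in direction $\xi$, use ${\rm ob}(p,q_j)\ge \pi/2-\e_j$, and split into the case where the far point is seen from $p$ (producing a long minimal geodesic in an antipodal direction) and the case where it is seen from $q_j$ (producing a long minimal geodesic extending $\gamma_\xi$). Your second case, comparing $\tau_j$ with the continuation $\beta_j$ of $\gamma_\xi$ via the small adjacent angle and the hinge form of the lower curvature bound (equivalently $\wangle\le\angle$, so the side opposite a small hinge angle is at most the model side), is correct and is in fact a slightly more explicit version of the paper's comparison with the fixed point $q_1$; your first case, including the step that the limit of the geodesics $px_j$ has initial direction $\lim_j\uparrow_p^{x_j}$, is taken for granted at exactly the same level as in the paper, so I do not count it against you.

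There is, however, a genuine gap in coverage. Your contradiction setup presupposes that $\xi$ (or, after swapping roles, an antipode of $\xi$) is the initial direction of some nontrivial minimal geodesic: you need the maximal $\gamma_\xi$ with $\ell_\xi>0$ in order to choose $q_j=\gamma_\xi(\ell_j/2)$. But $1$-${\rm inj}(M)$ is an infimum over \emph{all} $\xi\in\Sigma_p$, and $\Sigma_p$ is only the completion of the set $\Sigma_p^0$ of geodesic directions; there can be directions $\xi$ such that no minimal geodesic leaves $p$ in direction $\xi$ nor in any direction at angle $\pi$ from $\xi$. For such $\xi$ one has $\ell_\xi=\ell_{-\xi}=0<R/2$ while $\gamma_\xi$ does not exist, so your argument never gets started, yet the theorem still asserts a minimal geodesic of length $\ge R/2$ in direction $\xi$ or $-\xi$, so this case has real content. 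This is precisely what the paper's ``standard limiting argument'' handles: prove the bound for every $\xi\in\Sigma_p^0$ as you did, then for arbitrary $\xi$ take geodesic directions $\xi_i\to\xi$, take the minimal geodesics of length $\ge R/2$ in direction $\xi_i$ or an antipode of $\xi_i$, and pass to a limit, again identifying the limit direction as $\xi$ or an antipode of $\xi$. Adding this reduction, and stating the contradiction hypothesis as ``$\ell_\eta<R/2$ for every antipode $\eta$ of $\xi$'' (antipodes need not be unique), completes your proof; the rest stands as written.
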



%
In the noncompact case,  we have  

\begin{theorem} \label{thm:ob infty = pi/2(intro)}
	If  a complete noncompact  $n$-dimensional Alexandrov space  $M$ with curvature $\ge \kappa$
	has $\mathrm{ob}_{\infty}(M) = \pi/2$, then  
	$1$-${\rm inj}(M) =\infty$.
	
	Suppose in addition that  $M$ has nonempty boundary. Then 
	$M$ is homeomorphic to the Euclidean half space $\mathbb R^n_+$, and
	any distinct two points of $\partial M$ are on  a line  of $M$ which is contained  in $\partial M.$
\end{theorem}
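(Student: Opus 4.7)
The plan for $1\text{-}\mathrm{inj}(M)=\infty$ is to extract rays from the obtuse-angle condition at infinity. Fix $p\in M$ and $\xi\in\Sigma_p$. Since geodesic directions are dense in $\Sigma_p$, I choose $q_i\to p$ with $\uparrow_p^{q_i}\to\xi$. The definition $\mathrm{ob}_\infty(M)=\liminf_{|p,q|\to 0}\mathrm{ob}_\infty(p,q)=\pi/2$, combined with the trivial upper bound $\mathrm{ob}_\infty(\cdot,\cdot)\le \pi/2$, forces $\mathrm{ob}_\infty(p,q_i)\to\pi/2$. Hence for each $i$ I can select $x_{i,k}\to\infty$ with
\[
\max\bigl\{\angle(\uparrow_p^{q_i},\uparrow_p^{x_{i,k}}),\ \angle(\uparrow_{q_i}^{p},\uparrow_{q_i}^{x_{i,k}})\bigr\}\longrightarrow \pi \quad (k\to\infty).
\]
Passing to a subsequence in $i$, either the first or the second angle is dominant. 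In the first case, unit-speed minimizers from $p$ to $x_{i,k}$ have lengths diverging with $k$, so an Arzela--Ascoli argument on compacta produces a ray $\gamma_i$ from $p$ whose initial direction is antipodal to $\uparrow_p^{q_i}$, and a second Arzela--Ascoli pass as $i\to\infty$ (using compactness of $\Sigma_p$) yields a ray from $p$ in direction $-\xi$. In the second case, the analogous construction produces rays from the moving basepoints $q_i$ whose initial directions approach the antipode of $\uparrow_{q_i}^{p}$; since $\uparrow_{q_i}^{p}\to -\xi$ as $q_i\to p$, these converge to a ray from $p$ in direction $\xi$. Either way $p$ admits a ray in $\xi$ or $-\xi$, so $1\text{-}\mathrm{inj}(M)=\infty$.

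\textbf{Step 2: boundary case.} Assume $\partial M\neq\emptyset$. I want to show every two distinct $a,b\in\partial M$ lie on a line of $M$ contained in $\partial M$, and then deduce $M\cong\mathbb{R}^n_+$. First, produce a minimal geodesic $\alpha$ from $a$ to $b$ lying in $\partial M$, using the identification of $\partial \Sigma_p$ with the space of directions of $\partial M$ at a boundary point $p$, together with $\mathrm{ob}_\infty(p,q)=\pi/2$ applied to boundary pairs to obstruct escape into the interior. Then extend $\alpha$ to a bi-infinite geodesic by applying one-side injectivity at interior points of $\alpha$, where both the forward and backward directions of $\alpha$ exist as genuine antipodes in $\Sigma$, so the ray provided by $1\text{-}\mathrm{inj}(M)=\infty$ must match one of them by a minimality/uniqueness argument; the resulting extensions glue into a line lying in $\partial M$, since its initial directions belong to $\partial\Sigma$. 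Once such lines exist between every pair of boundary points, $\partial M$ is an $(n-1)$-dimensional Alexandrov space with a line through any two of its points; passing to an asymptotic cone to gain curvature $\ge 0$, applying Toponogov splitting iteratively, and transferring the rigidity back yields $\partial M\cong\mathbb{R}^{n-1}$. Finally, at each $p\in\partial M$ the interior directions $\xi\in\Sigma_p\setminus\partial\Sigma_p$ have no antipode, so $1\text{-}\mathrm{inj}(M)=\infty$ provides a ray normal to $\partial M$; these rays sweep out a homeomorphism $\partial M\times[0,\infty)\to M$, yielding $M\cong\mathbb{R}^n_+$.

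\textbf{Main obstacle.} The technical heart is Step 2: showing that a minimizer from $a$ to $b$ actually lies in $\partial M$ and that its line extension does too. One-side injectivity alone is direction-dependent and does not prevent a ray from leaving the boundary, so the condition $\mathrm{ob}_\infty(p,q)=\pi/2$ must be leveraged to align geodesics with $\partial\Sigma$; I expect this to require a dedicated lemma specific to boundary geometry, likely proved earlier in the paper. A secondary obstacle is deducing $\partial M\cong\mathbb{R}^{n-1}$: the splitting theorem for Alexandrov spaces requires curvature $\ge 0$, whereas our hypothesis only gives $\ge\kappa$, so one must pass through tangent or asymptotic cones and transport the rigidity back to $\partial M$ itself.
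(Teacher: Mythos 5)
Your Step 1 is essentially the paper's argument (the ray analogue of the proof of Theorem \ref{thm:maximal}), and is acceptable modulo standard care: the phrase ``$\uparrow_{q_i}^{p}\to-\xi$'' is not meaningful across the moving basepoints $q_i$, and identifying the direction of the limit ray requires either the hinge comparison for angles close to $\pi$ or the paper's device of forcing the limit geodesic through a fixed point on a geodesic in direction $\xi$. The genuine gap is Step 2, and it is exactly the point you defer to ``a dedicated lemma proved earlier in the paper'': no such separate lemma is needed, because the Step 1 dichotomy already yields the stronger statement that at a boundary point $p$ \emph{every} $\xi\in\Sigma_p$ admits a ray in the direction $\xi$ itself. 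Indeed, if $\xi$ is an interior direction and $\partial\Sigma_p\neq\emptyset$, no direction of $\Sigma_p$ lies at distance $\pi$ from $\xi$ (otherwise $\Sigma_p$ would be a spherical suspension with the interior point $\xi$ as a pole), so the alternative of the dichotomy that produces a nearly opposite direction at $p$ cannot occur for large $i$; the other alternative gives a ray in direction $\xi$, and boundary directions follow by a limit. This is the middle assertion of Theorem \ref{thm:ob infty = pi/2} and it drives the whole boundary statement: one takes the ray $\gamma$ from $p$ in direction $\uparrow_p^q$ and the ray $\sigma$ from $q$ in direction $\uparrow_q^p$ (these lie in $\partial M$, tied to the total geodesy of $\partial M$ from Theorem \ref{thm:maximal}); to verify that their concatenation is a line one introduces, for a far point $s$ on $\sigma$, the ray $\beta$ from $s$ through $p$ and uses uniqueness of geodesics between interior points of rays and non-branching to force $\beta$ to run back along $\sigma$ and then along $\gamma$, which gives $|s,r|=|s,p|+|p,r|$; the homeomorphism $M\cong\mathbb R^n_+$ then comes from the exponential map at a boundary point being defined on all of $T_pM$. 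You in fact notice the no-antipode phenomenon at the very end (for ``normal'' rays), but you never deploy it where it is essential, namely in constructing and controlling the boundary line.

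Several of the substitutes you propose in Step 2 do not work. One-side injectivity alone at interior points of the minimizer $pq$ gives a ray in one of the two directions, which does not supply extensions past both ends nor global minimality of the concatenation, and ``its initial directions belong to $\partial\Sigma$'' does not by itself keep the extension inside $\partial M$ --- that containment is part of what has to be proved, so the argument is circular as stated. More seriously, the route ``pass to an asymptotic cone to gain curvature $\ge 0$, split iteratively, and transfer the rigidity back to get $\partial M\cong\mathbb R^{n-1}$'' fails for $\kappa<0$: blow-downs of a space with curvature $\ge\kappa<0$ need not have curvature $\ge 0$ (the rescaled lower bounds degenerate), and rigidity of a blow-down cannot be transported back to the space itself. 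Note also that the theorem claims only a homeomorphism $M\cong\mathbb R^n_+$ --- the isometric conclusion is reserved for the nonnegatively curved case (Theorem \ref{thm:rigid}) --- and the paper nowhere asserts, nor needs, $\partial M\cong\mathbb R^{n-1}$.
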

In the case of nonnegative curvature, we have the following result. 

\begin{theorem} \label{thm:rigid}
	Let $M$ be a complete noncompact $n$-dimensional Alexandrov space with nonnegative  curvature. 
	Suppose that 
	$\mathrm{ob}_{\infty}(M) = \pi/2$. Then we have the following.
	\begin{enumerate}
		\item If $M$ has no boundary, then $1$-${\rm inj}(M(\infty))\ge \pi/2;$ 
		\item  If $M$ has nonempty boundary, then  $M$ is isometric to  $\mathbb R^{n}_{+}$.
	\end{enumerate}
\end{theorem}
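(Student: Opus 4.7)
The plan is to handle the two cases separately; case (2) reduces to the splitting theorem, while case (1) requires an analysis of the ideal boundary.

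For (2), I invoke Theorem~\ref{thm:ob infty = pi/2(intro)}, which already provides that $M$ is homeomorphic to $\mathbb{R}^n_+$ and that every pair of distinct points of $\partial M$ lies on a line of $M$ contained in $\partial M$. Each such line is a line in the nonnegatively curved Alexandrov space $M$, so the splitting theorem (in a form valid in the boundary setting, e.g.\ via the double) yields an isometric decomposition $M = \mathbb{R} \times N_1$ with the line realized as $\mathbb{R} \times \{q_1\}$, $q_1 \in \partial N_1$; consequently $\partial M = \mathbb{R} \times \partial N_1$ and the ``any two points lie on a line'' property descends to $\partial N_1$. Iterating $n-1$ times using independent lines in $\partial M \cong \mathbb{R}^{n-1}$, I obtain $M = \mathbb{R}^{n-1} \times N_0$, where $N_0$ is a $1$-dimensional noncompact nonnegatively curved Alexandrov space with connected boundary a single point; the only such $N_0$ is $[0,\infty)$, giving $M = \mathbb{R}^n_+$ isometrically. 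The main care here is to verify that each splitting respects $\partial M$ and that enough independent lines descend to the factors.

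For (1), my plan is to reduce to Theorem~\ref{thm:maximal(intro)} applied to $M(\infty)$ with its Tits metric, a compact Alexandrov space of curvature $\ge 1$. The reduction asks for
\[
\mathrm{ob}(M(\infty)) = \pi/2 \qquad \text{and} \qquad \mathrm{rad}(M(\infty)) = \pi,
\]
so that Theorem~\ref{thm:maximal(intro)} yields $1$-${\rm inj}(M(\infty)) \ge \mathrm{rad}(M(\infty))/2 = \pi/2$. The radius identity I derive from the hypothesis: given $\xi \in M(\infty)$ represented by a ray from $p$ in direction $\eta$, approximating $\eta$ by $\uparrow_p^{q_n}$ for $q_n \to p$ along the ray, the assumption $\mathrm{ob}_\infty(p,q_n) \to \pi/2$ produces $x_n \to \infty$ with $\angle(\uparrow_p^{q_n},\uparrow_p^{x_n}) \to \pi$; upper semicontinuity of angles then renders the limiting ray direction antipodal to $\eta$ at $p$, hence a Tits-antipode to $\xi$ in $M(\infty)$.

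The main obstacle will be the identity $\mathrm{ob}(M(\infty)) = \pi/2$: for close $\xi_1, \xi_2 \in M(\infty)$, I need to produce $\xi_3 \in M(\infty)$ with Tits angle $> \pi - \epsilon$ at $\xi_1$ between $\Uparrow_{\xi_1}^{\xi_2}$ and $\uparrow_{\xi_1}^{\xi_3}$. I plan to work in the asymptotic cone $CM(\infty)$---a nonnegatively curved Alexandrov space whose space of directions at the vertex is canonically $M(\infty)$---and use Toponogov comparison to transport the nearly-opposite ray structure supplied by the hypothesis into nearly-opposite Tits directions. Applying the hypothesis at pairs of nearby points in $M$ situated along rays representing $\xi_1$ and $\xi_2$, one obtains rays heading almost opposite to the small segments between such pairs; pushing these rays to infinity and passing to limiting ideal points should yield the required $\xi_3$ and complete the reduction to Theorem~\ref{thm:maximal(intro)}.
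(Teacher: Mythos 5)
Your treatment of (2) is essentially the paper's: the paper also deduces it from Theorem \ref{thm:ob infty = pi/2(intro)} together with the splitting theorem, and your iterated-splitting elaboration (lines in $\partial M$ force splittings, the line property descends to the factor, and the last factor must be $[0,\infty)$) is a correct filling-in of that one-line argument. The problem is part (1). The reduction you propose hinges on the identity $\mathrm{rad}(M(\infty))=\pi$, and this is false under the hypothesis $\mathrm{ob}_\infty(M)=\pi/2$. The paper's own Theorem \ref{thm:Mongoldt} provides counterexamples: a surface of revolution of nonnegative curvature with total curvature exactly $\pi$ has $\mathrm{ob}_\infty(M)=\pi/2$, while $M(\infty)$ is a circle of length $\pi$, whose radius (and diameter) is $\pi/2$, not $\pi$. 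The flaw in your derivation of the radius identity is that $\mathrm{ob}_\infty(p,q_n)\to\pi/2$ only produces points $x_n\to\infty$ for which the angle close to $\pi$ occurs at $p$ \emph{or} at $q_n$; in the second case the resulting ray from $q_n$ nearly extends the segment $pq_n$, so in the limit you get a ray in the direction of $\xi$ itself rather than an antipode of $\xi$. This one-sidedness cannot be removed (it is exactly why the theorem, and the circle-of-length-$\pi$ example, only give the one-side injectivity radius). Consequently the reduction to Theorem \ref{thm:maximal(intro)} cannot yield the sharp bound: with the true radius $R$ of $M(\infty)$ it would only give $1$-$\mathrm{inj}(M(\infty))\ge R/2$, which in the example above is $\pi/4$, strictly weaker than the claimed $\pi/2$. (Your second step, $\mathrm{ob}(M(\infty))=\pi/2$, is also only sketched and would need a genuine argument transporting angle information to Tits angles, but the decisive obstruction is the radius claim.)

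For comparison, the paper argues directly in the asymptotic cone $M_\infty=K(M(\infty))$: given a geodesic $\gamma$ of $M_\infty$ issuing from $\xi\in M(\infty)\times\{1\}$, it approximates an interior point $\gamma(a)$ by pairs of nearby points $x_i,y_{i,\alpha}$ lying on geodesics of $\e_i M$, applies the hypothesis to these pairs to get comparison angles $>\pi-o_i$ at $x_i$ or at $y_{i,\alpha}$ toward far-away points $z_i$, and passes to the limit to produce a ray of $M_\infty$ from $\gamma(a)$ in the direction $\gamma'(a)$ \emph{or} $-\gamma'(a)$; letting $a\to 0$ gives $1$-$\mathrm{inj}(M_\infty)=\infty$. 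Then, for a ray $\sigma$ of $M_\infty$ from $\xi$ in direction $v$ or $-v$, the points $\xi_t=\uparrow_o^{\sigma(t)}$ converge to a point $\xi'$ with $\angle(\xi,\xi')=\pi/2$, so $\{\xi_t\}$ is a shortest segment of length $\pi/2$ in $M(\infty)$ in direction $v$ or $-v$. Note that this route never claims antipodes in $M(\infty)$ exist; it only produces, for each direction, a segment of length $\pi/2$ on at least one of the two sides, which is precisely the one-side injectivity radius statement. If you want to salvage your plan, you would have to replace the appeal to Theorem \ref{thm:maximal(intro)} by an argument of this one-sided type carried out in the cone.
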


Note that the estimate $1$-${\rm inj}(M(\infty))\ge \pi/2$ in Theorem \ref{thm:rigid} (1) is sharp,
because there is a surface  of revolution of nonnegative curvature satisfying ${\rm ob}_{\infty}(M)=\pi/2$
and $M(\infty)$ is a circle of length $\pi$ (see Example \ref{ex:hyperboloid}). We also cannot expect 
that $M(\infty)$ has no singular points in Theorem \ref{thm:rigid} (1) (see Remark \ref{rem:rigid} and Conjecture \ref{conj:sing}).
\par\medskip


The organization of the present paper is as follows:
After preliminaries about Alexandrov spaces in Section \ref{sec:prelim}, we prove 
Theorem \ref{thm:main-cpt1} in Section \ref{sec:main},
where we apply an argument in \cite{Ch1} to our setting.
%
For the proof of Theorem \ref{thm:main-cpt2}, we apply 
the Lipschitz submersion theorem in \cite{Y:conv}, which is carried out in Section \ref{sec:collapse}.
To prove Theorem \ref{thm:main-noncpt}, we consider the convergence  to the asymptotic cone,
and apply ideas of the proof of Theorems \ref{thm:main-cpt1} and \ref{thm:main-cpt2}.
This is done in  Section \ref{sec:volume-growth}.
In Section \ref{sec:maximal}, we discuss the case when the obtuse constants attain the maximum value $\pi/2$,
where we obtain the  rigidity results, Theorems  \ref{thm:maximal(intro)},  \ref{thm:ob infty = pi/2(intro)} and \ref{thm:rigid},
together with the example showing that Theorem \ref{thm:rigid} is sharp (see Theorem \ref{thm:Mongoldt}).
In Section \ref{sec:compa-obtuse},   we introduce  the notions of {\it comparison obtuse constant} 
in terms of comparison angles and discuss the rigidity cases. This invariant  
does not  depend on the choice of the lower curvature bound.
In Section \ref{sec:kappa-obtuse},   we consider the notions of  
$\kappa$-obtuse constant from infinity,  which does depend on the choice of the lower curvature bound $\kappa$ of a 
noncompact space.  These new invariants  give more restriction on the space in the maximal case, 
and we have a strong rigidity in the case of nonnegative curvature, which might be of independent interest.

\section{Preliminaries} \label{sec:prelim}

In this paper, $|x,y|$ denotes the distance between two points $x,y$ in a metric space. 
An isometric embedding from an interval to a metric space is called a {\it minimal geodesic}. 
Furthermore, a fixed minimal geodesic between two points $x$ and $y$ is sometimes denoted by $xy$. 
For $\kappa \in \mathbb R$, we denote by $\mathbb M_\kappa$ the simply-connected complete surface of constant curvature $\kappa$, which is called the $\kappa$-plane. 
For distinct three points $x, y, z$ in a metric space, we denote by $\tilde \triangle_\kappa xyz$ a geodesic triangle in $\mathbb M_\kappa$ with the length of three sides $|x,y|$, $|y,z|$ and $|z,x|$, where $|x,y| + |y,z| + |z,x| < 2 \pi / \sqrt \kappa$ if $\kappa > 0$.
Vertices of $\tilde \triangle_\kappa xyz$ will be denoted by $\tilde x, \tilde y, \tilde z$. 
Furthermore, the angle of $\tilde \triangle_\kappa xyz$ at $\tilde x$ is denoted by $\wangle_\kappa yxz$
and is called the $\kappa$-{\it comparison angle} of $x,y,z$ at $x$. 
We also write $\wangle yxz$ by omitting $\kappa$ depending on the context. 

\subsection{Basics of Alexandrov spaces}
Let us recall the definition of Alexandrov spaces, following \cite{BGP}. 
An {\it Alexandrov space $M$ of curvature} $\ge \kappa$ is a locally complete metric space satisfying the following:
\begin{enumerate}
\item for any two points in $M$, there exists a minimal geodesic joining them; 
\item every point has a neighborhood $U$ such that for any two minimal geodesics $xy, xz$ contained in $U$ with the same starting point $x$, and for any $s \in xy$ and $t \in xz$, we have 
\[
|s, t| \ge |\tilde s, \tilde t|.
\]
Here, $\tilde s \in \tilde x \tilde y$ and $\tilde t \in \tilde x \tilde z$ are taken in the comparison triangle $\tilde \triangle_\kappa xyz = \triangle \tilde x \tilde y \tilde z$ with $|x,s|=|\tilde x, \tilde s|$ and $|x,t| = |\tilde x, \tilde t|$.
\end{enumerate}
When an Alexandrov space is complete as a metric space, due to \cite{BGP}, the property (2) holds globally.

From the definition, the monotonicity of comparison angle holds for an Alexandrov space, that is, for two geodesics $xy$ and $xz$ in an Alexandrov space $M$ of curvature $\ge \kappa$ as above, and $s \in xy - \{x\}$, $t \in xz - \{x\}$, we have  
\begin{equation} \label{eq:monotone}
\wangle yxz \le \wangle sxt. 
\end{equation}
Here, $\wangle$ means $\wangle_\kappa$. 
In particular, the limit 
\[
\angle (xy,xz) := \lim_{xy \ni s \to x,\, xz \ni t \to x} \wangle sxt
\]
always exists. 
It is called the {\it angle} between $xy$ and $xz$. 
When the geodesics $xy$ and $xz$ are fixed, we write $\angle yxz = \angle(xy, xz)$. 
By the definition of the angle, we obtain
\begin{equation} \label{eq:angle vs comp angle}
\angle yxz \ge \wangle yxz.
\end{equation}

When an Alexandrov space is complete, \eqref{eq:monotone} and \eqref{eq:angle vs comp angle} are also true for any geodesics. 

From now on, $M$ denotes an Alexandrov space of curvature $\ge \kappa$. 
Furthermore, we assume that $M$ has at least two points. 
For a point $x \in M$, let us set $\Gamma_x$ the set of all non-trivial geodesics starting from $x$. 
It is known that the angle $\angle$ is a pseudo-distance function on $\Gamma_x$. 
The completion of the metric space $\Sigma_x^0$ induced from $(\Gamma_x, \angle)$ is called the {\it space of directions at $p$ (in $M$)} which is denoted by $\Sigma_x = \Sigma_x M$. 
The distance function on $\Sigma_x$ is written as $\angle$, the same symbol as the angle. 
An element of $\Sigma_x$ is called a direction at $x$. 
Furthermore, for geodesics $xy, xz \in \Gamma_x$, $\angle yxz = 0$ if and only if $xy \subset xz$ or $xz \subset xy$ as the images of geodesics. 
In particular, any Alexandrov space does not admit a branching geodesic. 
The equivalent class of $xy$ is denoted by $\uparrow_x^y$.
Let $\Uparrow_x^y$ denote the set of all directions of geodesics from $x$ to $y$.

It is known that the Lebesgue covering dimension of $M$ is the same as the Hausdorff dimension of it, which is called the {\it dimension of $M$} and is written as $\dim M$ (\cite{BGP}, \cite{Plaut}).  
From now on, we assume that $\dim M < \infty$. 
This assumption implies that, the space of directions $\Sigma_x$ at $x \in M$ is compact and becomes an Alexandrov space of curvature $\ge 1$ and of dimension equal to $\dim M -1$. 
Here, we used a convention that the metric space of two points with distance $\pi$ is regarded as an Alexandrov space of curvature $\ge 1$ and dimension zero.

For $\delta > 0$, $p \in M$ is $\delta$-{\it strained} if there exists a collection $\{(a_i,b_i)\}_{1 \le i \le n}$ of pairs of points, where $n = \dim M$, such that 
\begin{align*}
&\wangle a_i p b_i > \pi - \delta, &&\wangle a_i p a_j > \pi/2- \delta, \\
&\wangle b_i p b_j > \pi/2 - \delta, &&\wangle a_i p b_j > \pi/2- \delta,
\end{align*}
hold for all $i \neq j$.
Such a collection $\{(a_i,b_i)\}$ is called a $\delta$-{\it strainer at} $p$. 
Let us denote by $\delta\text{-str.rad}(p)$ the supremum of $\min \{ |p, a_i|, |p, b_i|\}_i$, where the supremum runs over all $\delta$-strainers at $p$, which is called the $\delta$-{\it strained radius at} $p$.

The set $\mathcal R_\delta(M)$ of all $\delta$-strained points in $M$ is known to have full measure in the $n$-dimensional Hausdorff measure $\mathcal H^n$ (\cite{BGP}, \cite{OS}).
In particular, $\mathcal R(M) = \bigcap_{\delta > 0} \mathcal R_\delta(M)$ also has full measure and is dense in $M$. 
A point in $\mathcal R(M)$ is said to be {\it regular}. 
It is known that $p$ is regular if and only if $\Sigma_p$ is isometric to the sphere of constant curvature one. 
A point in $M \setminus \mathcal R_\delta(M)$ (resp. in $M \setminus \mathcal R(M)$) is said to be $\delta$-singular (resp. singular). 
The set of all singular points is denoted by $\mathcal S(M)$.

\subsection{Tangent cones}
Let $M$ be an $n$-dimensional complete Alexandrov space and $p \in M$. 
Let $\kappa$ denote a lower curvature bound of $M$. 
For simplicity, we assume $\kappa < 0$. 
We consider the function 
\[
f_\kappa (s) := \frac{\sinh( \sqrt{-\kappa}\, s)}{\sqrt{-\kappa}}. 
\]

We define the $\kappa$-tangent cone $T_p^\kappa M$ as follows. 
Let us consider the product $\Sigma_p \times [0, \infty)$. 
For $(\xi, s), (\eta, t) \in \Sigma_p \times [0,\infty)$, we define a distance between them as 
\[
f_\kappa' ( |(\xi,s), (\eta, t)| ) = f_\kappa'(s) f_\kappa'(t) + \kappa f_\kappa(s) f_\kappa(t) \cos \angle (\xi, \eta),
\]
where $f_\kappa'(s) = \frac{d}{ds} f_\kappa(s)$. 
This formula comes from the cosine formula on the $\kappa$-plane. 
Clearly, $|(\xi,s), (\eta, t)| = 0$ if and only if $(\xi,s) = (\eta,t)$ or $s=t=0$. 
So, we obtain a metric space by smashing $\Sigma_p \times \{0\}$ in $\Sigma_p \times [0,\infty)$ and set 
\[
T_p^\kappa M := \Sigma_p \times [0, \infty) / \Sigma_p \times \{0\}. 
\]
This is called the $\kappa$-tangent cone in this paper. 
A point in $T_p^\kappa M$ corresponding to $\Sigma_p \times \{0\}$ is denoted by $o$ and is called the origin. 
Then, $T_p^\kappa M$ becomes a complete $n$-dimensional Alexandrov space of curvature $\ge \kappa$. 
A logarithmic map $\log_p^\kappa : M \to T_p^\kappa M$ is defined by 
\[
\log_p^\kappa(x) := (\uparrow_p^x, |p,x|)
\]
where, if $x = p$, we set $\log_p^\kappa(p) := o$. 
Then, by the definition of Alexandrov spaces, $\log_p^\kappa$ is a distance noncontracting map. 
An important point is that $\log_p^\kappa$ has a nice left inverse as follows. 
\begin{theorem}[\cite{PP}, \cite{Pet}] \label{thm:gexp}
Let $M$ and $p$ be be as above, and $\kappa < 0$.
Then, there exists a $1$-Lipschitz map 
\[
\mathrm{gexp}_p^\kappa : T_p^\kappa M \to M
\] 
satisfying $\mathrm{gexp}_p^\kappa \circ \log_p^\kappa = \mathrm{id}$.

In particular, $\mathrm{gexp}_p^\kappa$ is surjective and 
\[
\mathrm{gexp}_p^\kappa(B(o,r)) = B(p,r)
\]
for any $r \ge 0$ with $r \le R_p$.
\end{theorem}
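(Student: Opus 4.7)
The plan is to construct $\mathrm{gexp}_p^\kappa$ by extending the usual exponential map, initially defined only on the image of $\log_p^\kappa$, to all of $T_p^\kappa M$ via quasigeodesics. A quasigeodesic, in the sense of \cite{PP}, is a unit-speed curve satisfying the conclusion of Toponogov's hinge comparison against every reference point; every minimal geodesic is a quasigeodesic, and the key fact I would use is that for each $\xi \in \Sigma_p$ there exists a quasigeodesic $\gamma_\xi : [0, \infty) \to M$ with $\gamma_\xi(0) = p$ and initial direction $\xi$. I would then set
\[
\mathrm{gexp}_p^\kappa(\xi, t) := \gamma_\xi(t),
\]
choosing one such quasigeodesic per direction and, whenever $\xi = \uparrow_p^x$, choosing $\gamma_\xi$ to agree with the chosen minimal geodesic $px$ on $[0, |p,x|]$, which is consistent because Alexandrov geodesics do not branch. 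By construction, $\mathrm{gexp}_p^\kappa \circ \log_p^\kappa = \mathrm{id}_M$.

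The main step is the $1$-Lipschitz bound. The critical input is the quasigeodesic version of the hinge comparison from \cite{PP}: for any two quasigeodesics $\gamma_1, \gamma_2$ emanating from $p$ with initial directions $\xi_1, \xi_2$, and any $t_1, t_2 \ge 0$,
\[
|\gamma_1(t_1), \gamma_2(t_2)| \le |\tilde x_1, \tilde x_2|,
\]
where $\tilde x_1, \tilde x_2 \in \mathbb{M}_\kappa$ lie on a hinge at $\tilde p$ of angle $\angle(\xi_1, \xi_2)$ with side lengths $t_1, t_2$. The right-hand side is exactly the distance $|(\xi_1,t_1),(\xi_2,t_2)|_{T_p^\kappa M}$ by the $\kappa$-cosine formula used to define the cone metric. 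Hence $\mathrm{gexp}_p^\kappa$ is $1$-Lipschitz, and as a byproduct the value at each $(\xi,t)$ is independent of the particular quasigeodesic extension chosen.

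With the $1$-Lipschitz property in hand, the remaining claims are short. Since $\mathrm{gexp}_p^\kappa(o) = p$, we get $\mathrm{gexp}_p^\kappa(B(o,r)) \subseteq B(p,r)$. Conversely, any $x \in B(p,r)$ with $r \le R_p$ satisfies $\log_p^\kappa(x) \in B(o,r)$ and $\mathrm{gexp}_p^\kappa(\log_p^\kappa(x)) = x$, giving the reverse inclusion; taking $r = R_p$ yields surjectivity in the compact case (the noncompact case follows by letting $r \to \infty$). The chief obstacle, and the part I would merely cite, is the existence and hinge-comparison property for quasigeodesics: proving these requires the full gradient-flow machinery on Alexandrov spaces and is one of the central technical achievements of \cite{PP} and \cite{Pet}.
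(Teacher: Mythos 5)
Your overall architecture (build a right inverse of $\log_p^\kappa$ by extending geodesics beyond their endpoints, then get surjectivity and $\mathrm{gexp}_p^\kappa(B(o,r))=B(p,r)$ formally) is fine, and the final paragraph is exactly how the ``in particular'' clause follows. But the central lemma you invoke --- a hinge comparison for \emph{two} quasigeodesics emanating from $p$, with the comparison angle taken equal to the angle between their initial directions --- is not what \cite{PP}, \cite{Pet} prove, and it is false in general. What the quasigeodesic theory gives is the convexity of developments with respect to a point, equivalently the comparison for a hinge one of whose sides is a \emph{shortest path}; that only yields $|\mathrm{gexp}_p^\kappa(v),x|\le |v,\log_p^\kappa(x)|$, i.e. contraction against points in the image of $\log_p^\kappa$, not the full $1$-Lipschitz bound on $T_p^\kappa M$. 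For a concrete counterexample to your two-quasigeodesic comparison, take the Euclidean cone over a circle of length $\theta<\pi$ (curvature $\ge 0$), $p$ at distance $1$ from the apex $v$, and the direction $\xi=\uparrow_p^v$. A quasigeodesic reaching the apex may be continued by \emph{any} ray from $v$, since both angles at the corner are $\le\theta<\pi$ (nonnegative turns; one checks the development condition directly). Two such continuations with the same initial direction $\xi$ separate at linear speed after time $1$, while your comparison with angle $\angle(\xi,\xi)=0$ would force them to coincide. The same example destroys your claimed ``byproduct'': quasigeodesics with prescribed initial direction are badly non-unique, so $\mathrm{gexp}_p^\kappa(\xi,t):=\gamma_\xi(t)$ is not canonically defined, and for an unlucky choice of continuations the resulting map is not $1$-Lipschitz. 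So the proposal has a genuine gap precisely at the step you flag as the ``main step''.

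This is also why the cited construction is different. The map $\mathrm{gexp}_p^\kappa$ of \cite{PP} and \cite{Pet} is the \emph{gradient} exponential map: it is built from gradient curves (radial curves) of $\mathrm{dist}_p$, or of the modified function $f_\kappa\circ\mathrm{dist}_p$, using the gradient-flow machinery for semiconcave functions. Gradient curves issuing from $p$ are unique, minimal geodesics from $p$ are radial curves (whence $\mathrm{gexp}_p^\kappa\circ\log_p^\kappa=\mathrm{id}$), and the $1$-Lipschitz property from the $\kappa$-cone over $\Sigma_p$ comes from the distance-contraction estimates for the gradient flow along radial directions --- not from a Toponogov-type statement about hinges of quasigeodesics. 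Note that radial curves are in general not quasigeodesics, so the two constructions genuinely differ; if you want to keep a quasigeodesic-flavoured argument you would have to replace your hinge lemma by the flow estimates of \cite{Pet}, at which point you have reproduced the cited proof.
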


\section{Proof of Theorem \ref{thm:main-cpt1}} \label{sec:main}
In this section, we are going to prove an equivalent formulation (Theorem \ref{thm:cpt case}) to Theorem \ref{thm:main-cpt1} as follows. 
To state it, let us consider a moduli space $\mathcal A$ consisting of all $n$-dimensional Alexandrov space of curvature $\ge -D^2$ with diameter one and normalized volume $\ge v$. 
Since the obtuse constant ${\rm ob}(M)$ is a scale-invariant, 
Theorem \ref{thm:main-cpt1} is equivalent to 
\begin{theorem} \label{thm:cpt case}
There exist positive numbers $\epsilon_{n,D}(v)$ and $\delta_{n,D}(v)$ depending only on $n,D,v$ such that for any $M \in \mathcal A$ and $p \neq q \in M$ with $|p,q| \le \delta_{n,D}(v)$, we have 
\[
{\rm ob}(p,q) \ge \epsilon_{n,D}(v). 
\]
\end{theorem}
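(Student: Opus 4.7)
The plan is a proof by contradiction combined with rescaling and Gromov--Hausdorff compactness, in the spirit of Cheeger's volume-comparison argument.

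Suppose the theorem fails: there exist $M_i \in \mathcal A$ and distinct points $p_i, q_i \in M_i$ with $r_i := |p_i, q_i| \to 0$ and ${\rm ob}(p_i, q_i) \to 0$. Rescale the metric on each $M_i$ by $r_i^{-1}$, so that $|p_i, q_i| = 1$ in the new metric, the curvature bound becomes $\ge -D^2 r_i^2 \to 0$, and the diameter becomes $r_i^{-1} \to \infty$. Since $R_{M_i} \ge \diam M_i / 2 = 1/2$ in the original metric, the ``far'' threshold $B(p_i, R_{M_i}/2)^c$ rescales to $B(p_i, R)^c$ with $R \ge 1/(4 r_i) \to \infty$. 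The volume condition $\mathcal{H}^n(M_i) \ge v$ together with Bishop--Gromov volume comparison yields the uniform bound $\mathcal{H}^n(B_{\mathrm{resc}}(p_i, R)) \ge c(n, D, v)\, R^n$ for every fixed $R > 0$ and all sufficiently large $i$.

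By Gromov's pointed precompactness theorem, a subsequence converges in the pointed Gromov--Hausdorff sense to a complete noncompact $n$-dimensional Alexandrov space $(Y, y_0)$ of curvature $\ge 0$, with $q_i \to q_\infty$, $|y_0, q_\infty| = 1$, and Euclidean volume growth $\mathcal{H}^n(B(y_0, R)) \ge c R^n$. The obtuse condition passes to the limit: for every ray $\sigma$ from $y_0$ in $Y$, $\angle(\sigma'(0), \uparrow_{y_0}^{q_\infty}) \le \pi/2$, and symmetrically for every ray $\tau$ from $q_\infty$, $\angle(\tau'(0), \uparrow_{q_\infty}^{y_0}) \le \pi/2$.

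I would then derive a contradiction through the asymptotic-cone structure of $Y$. Euclidean volume growth implies that the asymptotic cone $Y_\infty = \lim_{t \to \infty}(Y, t^{-1} d, y_0)$ is an $n$-dimensional nonnegatively curved Alexandrov cone whose ideal sphere $\Sigma(\infty)$ is an $(n-1)$-dimensional Alexandrov space of curvature $\ge 1$ with positive $(n-1)$-Hausdorff measure (bounded below by $c' = c'(n, D, v)$). Each $\omega \in \Sigma(\infty)$ gives rise to asymptotic rays $\sigma_\omega$ from $y_0$ and $\tau_\omega$ from $q_\infty$, yielding distance-nondecreasing maps $\Phi_y : \Sigma(\infty) \to \Sigma_{y_0}$ and $\Phi_q : \Sigma(\infty) \to \Sigma_{q_\infty}$ via the monotonicity of comparison angles. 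The limit obtuse conditions force $\Phi_y(\omega) \in H_{y_0} := \{\xi : \angle(\xi, \uparrow_{y_0}^{q_\infty}) \le \pi/2\}$ and $\Phi_q(\omega) \in H_{q_\infty}$ for every $\omega$. Combining these two hemispherical constraints via a Toponogov-type comparison applied to the triangles $\triangle y_0 q_\infty \sigma_\omega(t)$ as $t \to \infty$ (yielding an alternate-angle rigidity along the geodesic $y_0 q_\infty$ in nonnegative curvature) confines $\Phi_y(\omega)$ to the codimension-one equator $\{\xi \in \Sigma_{y_0}: \angle(\xi, \uparrow_{y_0}^{q_\infty}) = \pi/2\}$. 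Since this equator has $(n-1)$-Hausdorff measure zero while $\Phi_y$ is distance nondecreasing, the positivity of the measure of $\Sigma(\infty)$ yields the contradiction.

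The main obstacle is the last step: extracting the angle-equality rigidity from the simultaneous conditions at $y_0$ and $q_\infty$ in the nonnegative-curvature limit. This requires careful Toponogov-type analysis of the triangles $\triangle y_0 q_\infty \sigma_\omega(t)$ and control of angles at the ``vertex at infinity'' (where naive Busemann concavity yields only $\alpha+\beta\le\pi$, so some finer flat-strip or splitting input is needed to promote it to $\alpha=\beta=\pi/2$), together with a measure-theoretic verification that the level set $\{\angle = \pi/2\}$ is $(n-1)$-Hausdorff-null in $\Sigma_{y_0}$.
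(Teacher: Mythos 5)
Your strategy (blow up at the scale $|p_i,q_i|$, pass to a pointed limit, and contradict Euclidean volume growth) is genuinely different from the paper's, but as written it has two gaps, one of which you did not flag. The unflagged one concerns what survives the rescaled limit. The negated hypothesis controls only the directions $\uparrow_{p_i}^{x}$ for points $x$ at \emph{original} distance $\ge R_{M_i}/2\ge 1/4$ from $p_i$. After rescaling by $r_i^{-1}$ such points lie at distance about $r_i^{-1}\to\infty$, so in the limit $(Y,y_0)$ you may only conclude the hemispherical bound for those rays of $Y$ that arise as limits of minimal geodesics $p_i x_i$ with $x_i$ in the original far region (here semicontinuity of angles does work in your favor, since limit angles do not exceed the liminf). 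A general ray from $y_0$ --- and in particular the asymptotic ray $\sigma_\omega$ attached to an arbitrary $\omega\in\Sigma(\infty)$, which is what your maps $\Phi_y,\Phi_q$ require --- is obtained by a diagonal limit of geodesics $p_i z_i$ whose rescaled length tends to infinity while their original length may tend to $0$ (say of order $\sqrt{r_i}$), and about such geodesics the hypothesis says nothing. So the inclusions $\Phi_y(\Sigma(\infty))\subset H_{y_0}$ and $\Phi_q(\Sigma(\infty))\subset H_{q_\infty}$ are not established.

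The gap you do acknowledge is fatal in its current form: at the ``vertex at infinity'' the available comparison (Busemann concavity, or monotonicity of $\tilde\angle$) gives only $\alpha+\beta\le\pi$ for the angles that the two asymptotic rays to the same ideal point make with the segment $y_0q_\infty$, and together with $\alpha\le\pi/2$, $\beta\le\pi/2$ this yields nothing; to force $\alpha=\beta=\pi/2$ you would need $\alpha+\beta\ge\pi$, which fails already on flat cones, and no line is present in your limit configuration to trigger a splitting or flat-strip rigidity. (The final step also silently uses that the equatorial set $\{\xi\in\Sigma_{y_0}:\angle(\xi,\Uparrow_{y_0}^{q_\infty})=\pi/2\}$ is $\mathcal H^{n-1}$-null; this is true but needs the Bishop--Gromov-type continuity of $r\mapsto\mathcal H^{n-1}(B_r(\Uparrow_{y_0}^{q_\infty}))$ in a curvature $\ge 1$ space, which you should state.) For comparison, the paper's proof needs no limit at all: under the negation, \emph{every} point of $M\setminus B(p,R_M/2)$ has its direction at $p$ within $\epsilon$ of the equator around $\Uparrow_p^q$ (thin-triangle estimate), so this far region is covered, via the $1$-Lipschitz gradient exponential map, by a thin annular set of the tangent cone whose volume is at most $C_1\,\mathcal H^{n-1}(\mathbb S^{n-1})\,\theta_n(\epsilon)$; since the far region has volume bounded below by a compactness constant $V_1(n,D,v)>0$, choosing $\epsilon$ small gives the contradiction. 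Your volume-growth intuition is essentially this estimate, and it can be run directly at the original scale, avoiding both of the problematic limit steps.
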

To prove Theorem \ref{thm:cpt case}, we prepare several numerical constants. 

\begin{definition}[A constant $V_1$] \label{def:V1} \upshape
For each $M \in \mathcal A$, we consider 
\[
V_1(M) := \min_{p \in M} \mathcal H^n(M - B(p,R_M/2))
\]
and define a universal constant
\begin{align*}
V_1:= V_1(n,D,v) &:= \min_{M \in \mathcal A(n,D,v) } V_1(M) \\
&= \min_{M \in \mathcal A} \min_{p \in M} \mathcal H^n(M - B(p, R_M/2)).
\end{align*}
\end{definition}

\begin{definition}[A constant $R_{\min}$] \label{def:Rmin} \upshape
Let 
\[
R_{\min} := R_{\min}(n,D,v) := \min_{M \in \mathcal A} R_M. 
\]
\end{definition}
Since each $M \in \mathcal A$ has diameter one, $R_{\min} \ge 1/2$.  

\begin{definition}[A constant $C_1$] \label{def:C1} \upshape
Let 
\[
C_1:= C_1(n,D,v) := \int_{R_{\min}/2}^1 \left( \frac{1}{D} \sinh (D s) \right)^{n-1} ds.
\]
\end{definition}

Due to Bishop-Gromov's type inequality for Alexandrov spaces, we have
\begin{lemma}[\cite{Ch1}] \label{lem:theta}
Let $\Sigma$ be an $(n-1)$-dimensional Alexandrov space of curvature $\ge 1$, and $A \subset M$ a closed set. 
Then, we have 
\[
\frac{\mathcal H^{n-1}(B_{\pi/2+\epsilon}(A))-\mathcal H^{n-1}(B_{\pi/2-\epsilon}(A))}{\mathcal H^{n-1}(B_{\pi/2+\epsilon}(A))} \le \theta_n(\epsilon)
\]
Here, $\theta_n(\epsilon)$ is a positive continuous function so that $\lim_{\epsilon \to 0} \theta_n(\epsilon) = 0$. 
\end{lemma}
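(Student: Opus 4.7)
The plan is to reduce the estimate to a Bishop--Gromov type comparison for the tube around $A$ in $\Sigma$, with the unit $(n-1)$-sphere as the model space. Writing $\varphi(r) := \mathcal H^{n-1}(B_r(A))$ and $\Phi(r) := \int_0^r \sin^{n-1}(s)\, ds$, the key target inequality is the monotonicity
\[
\frac{\varphi(r_1)}{\Phi(r_1)} \ge \frac{\varphi(r_2)}{\Phi(r_2)} \qquad (0 < r_1 \le r_2 \le \pi),
\]
which reduces to the standard Bishop--Gromov inequality when $A$ is a single point, and is its tube version for general closed $A$. Granting it, a rearrangement with $r_1 = \pi/2 - \epsilon$ and $r_2 = \pi/2 + \epsilon$ gives
\[
\frac{\mathcal H^{n-1}(B_{\pi/2+\epsilon}(A)) - \mathcal H^{n-1}(B_{\pi/2-\epsilon}(A))}{\mathcal H^{n-1}(B_{\pi/2+\epsilon}(A))} \le \frac{\Phi(\pi/2+\epsilon) - \Phi(\pi/2-\epsilon)}{\Phi(\pi/2+\epsilon)} =: \theta_n(\epsilon),
\]
and this explicit $\theta_n$ is positive and continuous, with $\theta_n(\epsilon) \to 0$ as $\epsilon \to 0$, since its numerator is $O(\epsilon)$ while its denominator is bounded below by the positive constant $\int_0^{\pi/2} \sin^{n-1}(s)\, ds$.

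To establish the monotonicity of $\varphi/\Phi$ in the Alexandrov setting, I would work with the distance function $\rho(\xi) := \dist(\xi, A)$, which is $1$-Lipschitz and semiconcave on $\Sigma \setminus A$. On the open dense set where $\rho$ is differentiable and the set of directions toward $A$ is single-valued, the gradient flow of $-\rho$ (in the sense of Perelman--Petrunin, cf.\ \cite{PP}, \cite{Pet}) provides a $1$-Lipschitz deformation whose infinitesimal radial Jacobian is controlled by $\sin^{n-1}(r_1)/\sin^{n-1}(r_2)$ through Jacobi-field comparison on the unit sphere. Combining this with the coarea formula applied to $\rho$ yields the desired monotone ratio. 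An equivalent and slightly more elementary route is to first prove the pointwise version $\mathcal H^{n-1}(B_r(a))/\Phi(r)$ is non-increasing in $r$ for each $a \in A$ by the usual Bishop--Gromov on $\Sigma$, then deduce the tube version via a covering argument, which is the approach adopted in the Riemannian case in \cite{Ch1}.

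The main obstacle will be handling the tube version when $A$ is not a single point, since $\rho$ has a cut locus and the directions pointing to $A$ can fail to be single-valued on a nontrivial set. In the Alexandrov setting this is harmless because the non-differentiability locus of the semiconcave function $\rho$ has $\mathcal H^{n-1}$-measure zero by Otsu--Shioya type rectifiability, so these pathologies contribute nothing to the coarea integral and the comparison argument goes through. Once this technical point is settled, the remainder is exactly the one-variable estimate for $\theta_n(\epsilon)$ recorded above.
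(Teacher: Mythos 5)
Your overall strategy is the one the paper itself relies on (the lemma is quoted from \cite{Ch1} and justified by a Bishop--Gromov type inequality): reduce everything to monotonicity of $r\mapsto \mathcal H^{n-1}(B_r(A))/\Phi(r)$ with the spherical point model, then make the elementary one-variable estimate, which reproduces the paper's explicit $\theta_n$. But two concrete corrections are needed. First, your model exponent is off by one: $\Sigma$ has dimension $n-1$, so the model is $\mathbb S^{n-1}$ and $\Phi(r)$ must be a constant multiple of $\int_0^r\sin^{n-2}s\,ds$, not $\int_0^r\sin^{n-1}s\,ds$. This is not merely cosmetic: with your $\Phi$ the asserted monotonicity on all of $0<r_1\le r_2\le\pi$ fails already for $\Sigma=\mathbb S^{n-1}$ and $A$ a single point (the ratio increases for radii near $\pi$), whereas with $\sin^{n-2}$ it is the genuine Bishop--Gromov statement and yields exactly the paper's $\theta_n(\epsilon)=1-\mathcal H^{n-1}(B(o,\pi/2-\epsilon))/\mathcal H^{n-1}(B(o,\pi/2+\epsilon))$ computed in $\mathbb S^{n-1}$.

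Second, the proposed ``slightly more elementary route'' --- prove the point version and pass to tubes by a covering argument --- has a genuine gap: coverings cannot preserve the sharp radii, and the sharp constant is the entire content of the lemma. Concretely, taking a maximal $2r_1$-separated set $\{a_i\}\subset A$, the balls $B_{r_1}(a_i)$ are disjoint inside $B_{r_1}(A)$ and $B_{r_2}(A)\subset\bigcup_i B_{r_2+2r_1}(a_i)$, so the point inequality only gives $\mathcal H^{n-1}(B_{r_2}(A))\le \frac{\Phi(r_2+2r_1)}{\Phi(r_1)}\mathcal H^{n-1}(B_{r_1}(A))$; with $r_1=\pi/2-\epsilon$ and $r_2=\pi/2+\epsilon$ the radius $r_2+2r_1$ already exceeds $\pi$, the bound degenerates to roughly a factor $2$, and the resulting $\theta$ does not tend to $0$ with $\epsilon$. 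The correct reduction to the point case is pointwise rather than by covering: if $x\notin A$ and $a\in A$ is a foot point of $x$, then $d_A\le d_a$ with equality at $x$, so $d_a$ is an upper barrier and $d_A$ satisfies at distance $r$ the same semiconcavity (Riccati/Laplacian) comparison, bounded by $(n-2)\cot r$, as the distance from a single point; combined with the coarea formula this shows that $\mathcal H^{n-2}(\partial B_r(A))/\sin^{n-2}r$ is essentially non-increasing, and integrating gives the tube monotonicity with the point model. Your first route (gradient flow of $-\rho$) is in this spirit and can be made rigorous with the semiconcave-function machinery of \cite{PP}, \cite{Pet}, but note that such flows are not $1$-Lipschitz in general; what is actually needed is the foot-point barrier comparison just described, after which your remark about the negligible non-differentiability locus handles the cut-locus issue as you say.
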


Explicitely, $\theta_n$ is given as 
\[
\theta_n(\epsilon) = \frac{\mathcal H^{n-1}(B(o,\pi/2+\epsilon)) - \mathcal H^{n-1}(B(o,\pi/2-\epsilon))}{\mathcal H^{n-1}(B(o,\pi/2+\epsilon))}
\]
where $o \in \mathbb S^{n-1}$ is a base point.

\begin{definition}[A constant $\epsilon$] \label{def:epsilon} \upshape
Let us fix $\epsilon = \epsilon_{n,D}(v) > 0$ satisfying 
\[
C_1(n,D,v) \mathcal H^{n-1}(\mathbb S^{n-1}) \theta_n(\epsilon) \le V_1(n,D,v)/2. 
\]
\end{definition}

\begin{definition}[A constant $\delta$] \upshape \label{def:delta}
Let us fix a positive number $\delta = \delta(n,D,v) \ll R_{\min}$ satisfying the following conditions. 

Recall that $\mathbb M_{-D^2}$ denotes a simply-connected complete surface of constant curvature $-D^2$. 
For $a,b,c \in \mathbb M_{-D^2}$ with $|ab|\le \delta$, $1 \ge |bc| \ge R_{\min}/3$, we have 
\begin{equation} \label{eq:thin}
\begin{aligned}
&\angle acb \le \epsilon/10; \\
&|\angle bac + \angle abc -\pi| \le \epsilon/10. 
\end{aligned}
\end{equation}

Furthermore, for any $p \in M \in \mathcal A$, we have 
\begin{equation} \label{eq:BG}
\sup_{0 < r < 1} \mathcal H^n(B(p, r+\delta)) - \mathcal H^n(B(p,r)) \le V_1(n,D,v) /3.
\end{equation}
\end{definition}

Note that the condtiion \eqref{eq:thin} means that the triangle $abc$ given in the definition is thin. 
The inequality \eqref{eq:BG} is obtained by Bishop-Gromov's inequality.

\begin{proof}[Proof of Theorem \ref{thm:cpt case} (and \ref{thm:main-cpt1})]
Let $\epsilon$ and $\delta$ be given as above. 
Suppose that the statement of Theorem \ref{thm:cpt case} fails (for constants $\epsilon$ and $\delta$).
Then, there exist $M \in \mathcal A$ and $p \neq q \in M$ such that 
\begin{equation*} \label{eq:001}
|p,q| \le \delta
\end{equation*}
but
\begin{equation} \label{eq:002}
{\rm ob}(p;q) \le \epsilon/2
\hspace{1em}\text{ and }\hspace{1em}
{\rm ob}(q;p) \le \epsilon/2.
\end{equation}
The above condition \eqref{eq:002} implies that if $x \in M$ satisfies $|x, p| > R_M/2+\delta$, then 
\[
\tilde \angle pqx \le \pi/2 + \epsilon/2 
\hspace{1em}\text{ and }\hspace{1em}
\tilde \angle qpx \le \pi/2 + \epsilon/2. 
\]
Hence, we have 
\[
\tilde \angle pqx \ge \pi/2 - \frac{7}{10} \epsilon 
\hspace{1em}\text{ and }\hspace{1em}
\tilde \angle qpx \ge \pi/2 - \frac{7}{10} \epsilon. 
\]
In particular, 
\[
\angle (\Uparrow_q^p, \uparrow_q^x) \ge \pi/2 - \frac{7}{10}\epsilon 
\hspace{1em}\text{ and }\hspace{1em}
\angle (\Uparrow_p^q, \uparrow_p^x) \ge \pi/2 - \frac{7}{10}\epsilon. 
\]
Therefore, if $|x,p| > R_M/2 + \delta$, then 
\[
\left| \angle (\Uparrow_p^q, \uparrow_p^x) - \pi/2 \right| \le 7\epsilon/10 
\hspace{1em} \text{ and }\hspace{1em} 
\left| \angle (\Uparrow_q^p, \uparrow_q^x) - \pi/2 \right| \le 7\epsilon/10.
\]
By the definition of the constant $V_1$ and \eqref{eq:BG}, we obtain 
\begin{align*}
V_1(n,D,v) &\le \mathcal H^n(M - B(p,R_M/2)) \\
&\le \mathcal H^n(M-B(p,R_M/2 + \delta)) + V_1(n,D,v)/3. 
\end{align*}
Now we set 
\begin{align*}
A &:= \left\{ x \in M \,\left|\, 
\begin{aligned}
&|xp| > R_M/2 \text{ and } \\
&|\angle (\Uparrow_p^q, \uparrow_p^x) - \pi/2| \le \epsilon 
\end{aligned}
\right. \right\}; \\
A_T &:= 
\left\{ v \in T_p^{-D^2} M \,\left|\, 
\begin{aligned}
&1 \ge |v| > R_{\min}/2 \text{ and } \\
&|\angle (\Uparrow_p^q, v) - \pi/2| \le \epsilon 
\end{aligned}
\right. \right\}. 
\end{align*}
Here, $T_p^{-D^2}M$ is the $\kappa$-tangent cone at $p$. 
Due to Theorem \ref{thm:gexp}, there exists a surjective $1$-Lipschitz map 
\[
\mathrm{gexp}_p^{-D^2} : T_p^{-D^2} M \to M
\]
satisfyig $B(p, R') = \mathrm{gexp}_p^{-D^2}(B(o,R'))$ for every $0 < R' \le \max_{y \in M} |p,y|$.
Hence, we have 
\begin{align*}
M - B(p,R_M/2 + \delta) 
&\subset A 
\subset 
\mathrm{gexp}_p^{-D^2} 
(A_T).
\end{align*}
Therefore, 
\begin{align*}
2 V_1 / 3 
&\le \mathcal H^n(M - B(p,R_M/2 + \delta)) \\
&\le \mathcal H^n 
(A_T)
\\
&\le 
C_1(n,D,v) (\mathcal H^{n-1}(B(\Uparrow_p^q, \pi/2+\epsilon))-\mathcal H^{n-1}(B(\Uparrow_p^q, \pi/2-\epsilon))) \\
&\le 
C_1(n,D,v) \mathcal H^{n-1}(\mathbb S^{n-1}) \theta_n(\epsilon) \\
&\le 
V_1 /2. 
\end{align*}
This is a contradiction. 
\end{proof}


\section{Collapsing case} \label{sec:collapse}

In this section, we prove Theorem \ref{thm:main-cpt2} by contradiction.

We say that a surjective map $f:M\to X$ between Alexandrov spaces is 
an $\e$-almost Lipschitz submersion if 
\begin{enumerate}
	\item[$(1)$]  it is an $\e$-approximation;
	\item[$(2)$] for every $p,q\in M$, we have
	\[
	\left|  \frac{|f(p),f(q)|}{|p,q|}  - \sin\theta \right| <\e,
	\]
	where $\theta$ denotes the infimum of $\angle qpx$ when $x$ runs over
	the fiber $f^{-1}(f(p))$. 
\end{enumerate}

We recall the following result from \cite[Theorem 0.2 and  Lemma 4.19]{Y:conv}.

\begin{theorem} \label{thm:Lipschitz-sub}
	For given positive integer $m$ and $\mu_0>0$ there are $\delta=\delta_m>0$ and 
	$\e=\e_m(\mu_0)>0$ satisfying the following:
	Let $X$ be an $m$-dimensional complete Alexandrov space with curvature $\ge -1$ and 
	with $\delta$-${\rm str}$-$\rad(X)>\mu_0$. Then if the Gromov-Hausdorff distance between $X$ and 
	a complete Alexandrov space $M$ with curvature $\ge -1$ is less than $\epsilon$, then 
	there exists a map  $f:M\to X$ such that 
	\begin{enumerate}
		\item[$(1)$]  it is a  $\tau(\delta,\epsilon)$-almost Lipschitz submersion;
		\item[$(2)$]  it  is $(1-\tau(\delta,\epsilon))$-open in the sense that  for every $p\in M$ and 
		$x\in X$ there exists a point $q\in f^{-1}(x)$ such that $|f(p), f(q)| \ge (1-\tau(\delta, \e))|p,q|$.
	\end{enumerate}
	Here  $\tau(\delta,\epsilon)$ is a positive constant depending only on $m$, $\mu_0$ and 
	$\delta$, $\e$ satisfying $\lim_{\delta,\e\to 0} \tau(\delta,\e) = 0$.
\end{theorem}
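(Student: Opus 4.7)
The plan is to build $f$ by transferring local distance-coordinate charts on $X$ to $M$ via the Gromov--Hausdorff approximation, and then gluing the local maps by a partition-of-unity averaging inside $X$. This realises the Fukaya--Yamaguchi scheme for constructing Lipschitz submersions under a lower sectional curvature bound, adapted to the Alexandrov category.

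First I would exploit the hypothesis $\delta\text{-str.rad}(X) > \mu_0$ to fix, at each $x \in X$, a $\delta$-strainer $\{(a_i,b_i)\}_{i=1}^{m}$ with all strainer lengths greater than $\mu_0$. The distance map $\varphi_x(y) := (|y,a_1|,\ldots,|y,a_m|)$ is a $(1+\tau(\delta))$-bi-Lipschitz embedding of a neighborhood $U_x$ of $x$ whose size is bounded below in terms of $m$ and $\mu_0$; this follows from the strainer lemma together with the first variation formula, which yield near-orthogonality of the gradients of the $|\,\cdot\,,a_i|$. Fixing an $\epsilon$-GH approximation $\psi : X \to M$ and setting $\tilde a_i := \psi(a_i)$, one checks that $\{(\tilde a_i, \tilde b_i)\}$ is a $(\delta + \tau(\epsilon))$-strainer at each point of $\tilde U_x := \psi(U_x) \subset M$, so that the pulled-back distance map $\tilde\varphi_x(p) := (|p,\tilde a_1|,\ldots,|p,\tilde a_m|)$ is also $(1+\tau(\delta,\epsilon))$-bi-Lipschitz onto its image. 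The local candidate is $f_x := \varphi_x^{-1} \circ \tilde\varphi_x : \tilde U_x \to X$.

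To globalise, I would choose a locally finite subcover $\{U_{x_\alpha}\}$ of $X$ with a subordinate Lipschitz partition of unity $\{\rho_\alpha\}$, lift it to $M$ via $\rho_\alpha \circ f_{x_\alpha}$, and define $f(p) \in X$ as a center of mass of the local values $f_{x_\alpha}(p)$ weighted by the $\rho_\alpha$, computed in one ambient Euclidean chart $\varphi_{x_{\alpha_0}}$ and pulled back by $\varphi_{x_{\alpha_0}}^{-1}$. Because the $f_{x_\alpha}(p)$ lie within $\tau(\delta,\epsilon)$ of each other, the average is well defined and the resulting $f$ is $(1+\tau)$-Lipschitz, surjective, and $\tau$-close to $\psi^{-1}$. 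Property (1) is then verified as follows: for $p,q \in M$, the strainer at $f(p)$ is also (approximately) a strainer at $p$ in $M$, so by the strainer Pythagorean theorem a minimizer $pq$ decomposes, up to error $\tau$, into a horizontal component parallel to the $\tilde a_i$-directions (exactly what $\tilde\varphi_x$, and hence $f$, measures) and a vertical component tangent to the fiber $f^{-1}(f(p))$; trigonometric comparison then shows the horizontal length equals $\sin\theta \cdot |p,q|$ up to $\tau(\delta,\epsilon)|p,q|$, where $\theta$ is the infimum of $\angle qpx$ over $x \in f^{-1}(f(p))$. Property (2) I would obtain by lifting a geodesic in $X$ from $f(p)$ to $x$ through the inverses $\tilde\varphi_{x_\alpha}^{-1}$ applied to horizontal motions, producing $q \in f^{-1}(x)$ whose distance to $p$ is bounded above by $(1-\tau)^{-1}|f(p),x|$.

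The hard part is the uniform quantitative control of $\tau$ across the gluing. Pointwise, each local piece is standard Alexandrov-space calculus; the delicate point is that to obtain a genuine Lipschitz submersion (not merely an almost-submersion statement at almost every point) one must propagate the strainer structure coherently across overlapping charts and compare angles in $M$ with Euclidean angles in $\mathbb R^m$ through the distance coordinates, using the monotonicity of comparison angles and the first variation formula. Coupling these errors through the partition-of-unity averaging is what forces the specific dependence $\epsilon = \epsilon_m(\mu_0)$, while $\delta$ depends only on $m$ because the intrinsic dimension alone controls the strainer geometry.
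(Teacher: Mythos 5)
This statement is not proved in the paper at all: it is imported verbatim from Yamaguchi's convergence paper, cited as \cite[Theorem 0.2 and Lemma 4.19]{Y:conv}, so there is no ``paper's own proof'' to compare against line by line. Your proposal is a reasonable reconstruction of the strategy behind the cited result (strainers give almost-isometric distance coordinates on $X$, the Gromov--Hausdorff approximation transports strainers to $M$, and the local coordinate maps are assembled into a global map whose fibers are the ``vertical'' directions), and the verification of property $(1)$ via near-orthogonality of strainer directions is indeed the content of Lemma 4.11 of \cite{Y:conv}, which the present paper uses separately. So in spirit you are on the right track.

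However, as a proof your text has a genuine gap precisely at the gluing step, and your implementation differs from the actual one in a way that matters. Yamaguchi does not glue local charts by a partition-of-unity center of mass in a single coordinate chart; he builds a single global map by embedding $X$ almost isometrically into a Euclidean space $\mathbb R^N$ using distance functions \emph{averaged over small balls} around the strainer points (the averaging is essential to get uniform directional derivative control, not just bi-Lipschitz control), does the same for $M$, and then composes with a projection onto the image of $X$. In your version, the claim that the weighted average of the $f_{x_\alpha}(p)$ is well defined, $(1+\tau)$-Lipschitz, and still satisfies the submersion estimate is exactly the hard uniform statement, and you assert it rather than prove it: a center of mass of points that are pairwise $\tau$-close in a $(1+\tau)$-bi-Lipschitz chart controls the \emph{value} of $f$ to order $\tau$, but property $(1)$ is an estimate on \emph{difference quotients} $|f(p),f(q)|/|p,q|$ for $|p,q|$ arbitrarily small, and an additive error of size $\tau$ in the values destroys such an estimate unless one shows the errors are coherent to first order across overlapping charts. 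That coherence is what the averaged-distance-function construction delivers and what your sketch does not. Similarly, property $(2)$ (the openness) requires an actual gradient-curve or successive-approximation argument to produce the point $q\in f^{-1}(x)$, not just the existence of local inverses of the coordinate maps. Since the theorem is quoted here as a black box, the right move in the context of this paper is simply to cite \cite{Y:conv}; if you want to prove it, the averaging construction and the uniform first-order estimates are the parts you must supply.
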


%

\begin{proof}[Proof of Theorem \ref{thm:main-cpt2}]
	Suppose it is not true. Then there would exist a sequence $M_i$ in $\mathcal A(n,D)$ with 
	$\tilde v(M_i)\to 0$ and ${\rm ob}(M_i)>c>0$ for some uniform constant $c$. 
	When $\diam(M_i) \to 0$, we rescale the metric so that $\diam(M_i) =1$ with respect to 
	the new metric. Then since $\mathcal H^n(M_i)\to 0$, passing to a subsequence, we may assume that
	$M_i$ collapses to a lower dimensional 
	Alexandrov space $X$ with $\dim X\ge 1$. Let $m=\dim X$, and take a regular point $x_0$ of $X$ and small $\e_0>0$ 
	such that the $B(x_0,r_0)\subset R_{\delta}(X)$ with $\delta<\delta_m$ and that the $\delta$-strain radius of 
	$B(x_0,r_0)$ is greater than a constant $\mu_0>0$.
	Applying Theorem \ref{thm:Lipschitz-sub} to $B:=B(x_0,r_0)$, we have a
	$\tau(\delta,\epsilon)$-almost Lipschitz submersion $f_i:U_i\to B$.
	By the coarea formula (see \cite{Kar} for instance), we obtain
	\[
	\int_{U_i} C_n(f_i,p)d\mathcal H^n(p) =   \int_{B} \mathcal H^{n-m}(f_i^{-1}(x)) \,d\mathcal H^m(x),
	\]
	where $C_n(f_i,p)$ denotes the coarea factor of $f_i$ at $p$.
	Since $f_i$ is $\tau(\delta,\e)$-almost Lipschitz submersion, we see that
	$|C_n(f_i,p) - 1|<\tau(\delta, \e)$. 
	Let $B_0$ be the set of points $y\in B$ such that $\mathcal H^{n-m}(f_i^{-1}(y)) >0$.
	It follows that  $B_0$ is  dense in $B$.
	For $y_0\in B_0$, one can take distinct points $p$ and $q$ in $f_i^{-1}(y_0)$
	which are sufficiently close to each other.
	Lemma 4.11 of \cite{Y:conv} shows that $|\angle(\uparrow_p^q, H_p) - \pi/2|<\tau(\delta, \e)$, where 
	$H_p\subset\Sigma_p$ denotes the horizontal directions at $p$  defined as 
	\[
	H_p = \{ \, \uparrow_p^x\,|\, |p,x|\ge \mu_0\,\}
	\]
	(see \cite{Y:conv}). It follows that for every $x\in B(p, R/2)^c$,  
	$|\angle(\uparrow_p^x, \Uparrow_p^q)-\pi/2| <\tau(\delta, \e)$. Similarly we have 
	$|\angle(\uparrow_q^x, \Uparrow_q^p)-\pi/2| <\tau(\delta, \e)$ for 
	all $x\in B(q, R/2)^c$, and 
	therefore ${\rm ob}(p,q) < \tau(\delta, \e)$.
	This completes the proof of Theorem \ref{thm:main-cpt2}.
\end{proof}

\begin{problem}
	Probably,
	the fiber $f^{-1}(x)$ has positive $(n-m)$-dimensional Hausdorff measure for all $x\in X$
	in the situation of Theorem \ref{thm:Lipschitz-sub}.
\end{problem}



\begin{proof}[Proof of Corollary \ref{cor:cpt3}]
	The conclusion follows from Theorems \ref{thm:main-cpt1} and  \ref{thm:main-cpt2}.
	The desired functions $\epsilon_n$ and $C_n$ in the conclusion are defined as follows, for instance. 
	We construct only $\epsilon_n$. 
	Let 
	\[
	\mathcal A := \left\{M \,\left|\, \begin{aligned}
	&M \text{ is an $n$-dimensional compact} \\
	&\text{Alexandrov space of nonnegative curvature}
	\end{aligned}
	\right.
	\right\}
	%
	\]
	and set 
	\[
	\epsilon_n'(\tilde v) := \inf \left\{ \mathrm{ob}(M) \mid 
	M \in \mathcal A \text{ with } \tilde v(M) \ge \tilde v
	\right\}
	\]
	for $\tilde v > 0$. 
	Then, $\epsilon_n'$ satisfies 
	\[
	\epsilon_n'(\tilde v(M)) \le \mathrm{ob}(M)
	\]
	for every $M \in \mathcal A$.
	Furthermore, by Theorem \ref{thm:main-cpt1}, $\epsilon_n'(\tilde v) > 0$ for any $\tilde v > 0$. 
	From Theorem \ref{thm:main-cpt2}, we have 
	\[
	\lim_{\tilde v \to 0} \epsilon_n'(\tilde v) = 0.
	\]
	
	Note that the problem of maximizing $\tilde v(M)$ in $\mathcal A$ is equivalent to the problem of maximizing the usual volume in the restricted class of $M$'s whose diameter is one, because $\mathrm{ob}(M)$ and $\tilde v(M)$ are scale invariants. 
	Since a maximizing sequence in the latter class has a convergent subsequence, there is a maximal value of $\tilde v(M)$ in $\mathcal A$, say $\tilde v_{n, \max}$.
	
	Let us define a step function $\epsilon_n'' : (0, \tilde v_{n, \max}] \to [0, \pi/2]$ by 
	\[
	\epsilon_n'' (\tilde v) := \epsilon_n'(\tilde v_{n, \max}/k) \text{ if } \tilde v \in (\tilde v_{n, \max}/k, \tilde v_{n, \max}/(k-1)]
	\]
	which bounds $\epsilon_n'$ from below. 
	Furthermore, we consider the piecewise linear function connecting points $(\tilde v_{n, \max} /(k-1), \epsilon_n''(\tilde v_{n, \max} / k))$'s. 
	Then, the function $\epsilon_n$ satisfies the desired condition of the conclusion of Corollary \ref{cor:cpt3}.
\end{proof}

\begin{remark} \upshape \label{rem:ball}
	Clearly, Theorems \ref{thm:main-cpt1} and \ref{thm:main-cpt2} and Corollary \ref{cor:cpt3} holds for balls in Alexandrov spaces. 
	For a ball $B(p,r)$ in an $n$-dimensional (possibly noncompact) complete Alexandrov space centered at $p$ and radius $r$ with $r \le R_p$, we set 
	$\mathrm{ob}(B(p,r))$ as follows. 
	For $x \neq y \in B(p,r/2)$, we set 
	\[
	\mathrm{ob}_{B(p,r)}(x;y) := \sup_{z \in B(p,r) - B(p,r/2) } \angle (\Uparrow_x^y, \uparrow_x^z) - \pi/2
	\]
	and we define 
	\[
	\mathrm{ob}(B(p,r)) := \liminf_{x, y\, \in B(p,r/2) \text{ and } |x,y| \to 0} \max \left\{ \mathrm{ob}_{B(p,r)}(x;y), \mathrm{ob}_{B(p,r)}(y;x) \right\}. 
	\]
	Then, for instance, corresponding to Corollary \ref{cor:cpt3}, we have 
	\[
	\epsilon_n(\tilde v(B(p,r))) \le \mathrm{ob}(B(p,r)) \le C_n(\tilde v(B(p,r)))
	\]
	for any point $p$ in an $n$-diemensional Alexandrov space $M$ of nonnegative curvature and any $r > 0$ with $r \le R_p$. 
\end{remark}

\section{Volume growth and obtuse constant from infinity} \label{sec:volume-growth}
This section is devoted to prove Theorem \ref{thm:main-noncpt}. 

In this section, let $M$ denote noncompact complete Alexandrov $n$-space of nonnegative curvature. 
As written in the introduction, we discuss about a relation between the volume growth rate
\[
v_\infty(M) = \lim_{R \to \infty} \frac{\mathcal H^n(B(x,R))}{R^n}
\]
and the obtuse constant from infinity.


\begin{proof}[Proof of Theorem \ref{thm:main-noncpt}]
We first prove that ${\rm ob}_\infty(M)$ has a lower bound $\epsilon_n(v_\infty(M))$. 
Assuming $v_\infty(M) \ge v > 0$, we prove 
\begin{equation} \label{eq:009}
\inf_{p \neq q} {\rm ob}_\infty(p,q) \ge \epsilon_n(v) > 0. 
\end{equation}
Note that this claim is stronger than the first inequality in Theorem \ref{thm:main-noncpt}. 
Fix a base point $p \in M$. 
From the assumption, 
there exists $R_0 > 0$ such that 
if $R \ge R_0$, then 
\[
\mathcal H^n(B(p,R)) \ge v R^n/2. 
\]
That is, the unit ball $B_R := \frac{1}{R} B(p,R)$ centered at $p$ in $\frac{1}{R} M$ has volume not less than $v/2$.
Hence, by Corollary \ref{cor:cpt3} and Remark \ref{rem:ball}, there exists $\delta = \delta_n(v) > 0$ such that 
for $R \ge R_0$ and for $x, y \in B(p,R/2)$ with $0 < |x,y| \le \delta R$, 
we obtain a point $z \in B(p,R) - B(p,R/2)$ satisfying 
\[
\max \left\{ \angle (\Uparrow_x^y, \uparrow_x^z), \angle (\Uparrow_y^x, \uparrow_y^z) \right\}
\ge \pi/2 +  \epsilon_n(v) 
\]
Here, $\epsilon_n(v)$ is given by Corollary \ref{cor:cpt3}. 
Now we prove the claim \eqref{eq:009}. 
Let us take arbitrary $x \neq y \in M$.
Taking $R$ to be large, we have $x, y \in B(p, \delta R/2)$. 
Hence, we obtain 
\begin{align*}
{\rm ob}_\infty(x,y) &= \inf_{R > R_0} \sup_{|zp| > R/2} \max \left\{ \angle (\Uparrow_x^y, \uparrow_x^z), \angle (\Uparrow_y^x, \uparrow_y^z) \right\} - \pi/2 \\
&\ge \inf_{R > R_0} \sup_{R/2 < |zp| \le R} \max \left\{ \angle (\Uparrow_x^y, \uparrow_x^z), \angle (\Uparrow_y^x, \uparrow_y^z) \right\} - \pi/2 \\
&\ge \epsilon_n(v). 
\end{align*}
This completes the proof of the claim \eqref{eq:009}. 
In particular, we obtain 
\[
\epsilon_n(v) \le \inf_{x \neq y} {\rm ob}_\infty(x,y) \le {\rm ob}_\infty(M). 
\]

We shall prove that ${\rm ob}_\infty(M)$ has an upper bound as $C_n(v_\infty(M))$. 
We assume that there is no bound as $C_n$. 
So, there exists a sequence $\{M_i\}_i$ of noncompact complete $n$-dimensional Alexandrov spaces of nonnegative curvature satisfying $v_\infty(M_i) \to 0$ and ${\rm ob}_\infty(M_i) \ge \epsilon > 0$. 
Here, $\epsilon$ is independent of $i$. 
Let us take base points $p_i \in M_i$. 
Then, there exist $R_i \to \infty$ such that 
\[
0 \le \frac{\mathcal H^n(B(p_i, R_i))}{R_i^n} - v_\infty(M_i) < i^{-1}. 
\]
Therefore, by subtracting a subsequence, we may assume that $\left( \frac{1}{R_i} M_i, p_i \right)$ 
collapses to a pointed noncompact Alexandrov space $(X,p)$. 
Now, using Theorem \ref{thm:Lipschitz-sub} and the coarea formula (\cite{Kar}) as in the proof of Theorem \ref{thm:main-cpt2} we have 
\[
{\rm ob}_\infty(M_i) < \epsilon/2
\]
for large $i$. 
This is a contradiction. 
\end{proof}

\begin{remark} \upshape
In the proof of Theorem \ref{thm:main-noncpt}, we have proven a more detailed assertion as 
\[
\epsilon_n(v_\infty(M)) \le \inf_{p \neq q \in M} {\rm ob}_\infty(p,q) \le {\rm ob}_\infty(M) \le C_n(v_\infty(M)). 
\]
\end{remark}

\section{Maximal cases} \label{sec:maximal}
In this section, let us discuss the maximal case of the obtuse constants equal to $\pi/2$.
Let $M$ be an  Alexandrov space with curvature bounded below.
For every $p\in M$ and $\xi\in\Sigma_p(M)$, let $1$-${\rm inj}(p;\xi)$ denote    
the supremum of $r\ge 0$ such that  there exists 
a  minimal geodesic $\gamma$ starting from $p$ in the direction $\xi$ or $-\xi$ 
(if it exists) of length $r$. Then the one-side injectivity radius of $M$ is defined as 
\[
1 \text{-} {\rm inj}(M) = \inf \{ 1 \text{-}{\rm inj}(p;\xi)\,|\, p\in M, \xi\in\Sigma_p\,\}.
\]
%
One of main results of this section is to prove the following, which is a detailed version of Theorem \ref{thm:maximal(intro)}.

\begin{theorem} \label{thm:maximal}
	If a compact Alexandrov space $M$ with curvature $\ge \kappa$ and radius $R$ 
	has $\mathrm{ob}(M) = \pi/2$, then 
	$1$-${\rm inj}(M) \ge R/2$.
	\par
	Suppose that $M$ has nonempty boundary in addition.  Then 
	for every $p\in \partial M$ and every $\xi\in\Sigma_p$ 
	there exists a minimal  geodesic  in the direction $\xi$ of length $\ge R/2$.
	In particular, $\partial M$ is totally geodesic and a $C^0$-Riemannian manifold. 
\end{theorem}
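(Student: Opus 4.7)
The plan is to exploit the $\liminf$ definition of $\mathrm{ob}(M)$: the hypothesis $\mathrm{ob}(M) = \pi/2$ implies that for every $\epsilon > 0$, every pair $p \neq q$ with $|p,q|$ sufficiently small satisfies $\mathrm{ob}(p,q) > \pi/2 - \epsilon$. Fix $p \in M$ and a direction $\xi \in \Sigma_p$ realized by a minimizing geodesic from $p$, and choose points $q_i$ on that geodesic with $q_i \to p$, so $\uparrow_p^{q_i} \to \xi$. Then $\mathrm{ob}(p,q_i) \to \pi/2$, and after passing to a subsequence one has either (A) $\mathrm{ob}(p;q_i) \to \pi/2$, or (B) $\mathrm{ob}(q_i;p) \to \pi/2$.

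In case (A), choose $x_i \in M$ with $|x_i,p| > R/2$ and $\angle(\Uparrow_p^{q_i},\uparrow_p^{x_i}) \to \pi$. Compactness of $M$ and the stability of minimizing geodesics under subsequential limits yield a minimizing geodesic $p x_*$ of length $\ge R/2$ satisfying $\angle(\xi,\uparrow_p^{x_*}) = \pi$, so $\uparrow_p^{x_*} = -\xi$. In case (B), choose $x_i$ with $|x_i,q_i| > R/2$ and $\angle(\Uparrow_{q_i}^p,\uparrow_{q_i}^{x_i}) \to \pi$. Toponogov comparison gives $|p,x_i| \ge |p,q_i| + |q_i,x_i| - \epsilon_i$ with $\epsilon_i \to 0$, so the concatenation $pq_i \cup q_i x_i$ is almost minimizing. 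Its limit as $q_i \to p$ and (along a subsequence) $x_i \to x_*$ is a minimizing geodesic from $p$ of length $\ge R/2$, whose starting direction at $p$ is identified as $\xi$ via convergence of the tangent cones at $q_i$ to $T_p M$ along $pq_i$; informally, the direction $-\uparrow_{q_i}^p$ at $q_i$ corresponds to $\xi$ at $p$ in the limit. Either case yields $1$-$\mathrm{inj}(p;\xi) \ge R/2$; arbitrary $\xi \in \Sigma_p$ is handled by approximating with geodesic directions and passing to the limit.

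For the boundary claim, suppose $p \in \partial M$ and $\xi \in \Sigma_p \setminus \partial \Sigma_p$. Since no element of $\Sigma_p$ lies at distance $\pi$ from $\xi$, case (A) is vacuous and case (B) must prevail, producing a geodesic in the direction $\xi$ itself of length $\ge R/2$. For $\xi \in \partial \Sigma_p$, approximate by interior directions in $\Sigma_p \setminus \partial\Sigma_p$ and take limits. Total geodesicness of $\partial M$ then follows because at any $p \in \partial M$ the directions $\xi, -\xi \in \partial \Sigma_p$ both admit long geodesics, whose concatenation forms a line segment through $p$ tangent to $\partial M$ on both sides; since $M$ has no ``other side'' into which such a tangent line could bend, this segment must lie in $\partial M$. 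The length-$\ge R/2$ geodesics in every direction of $\Sigma_p$ at each $p \in \partial M$ supply an exponential-type chart which, combined with the metric on $\Sigma_p$, produces the $C^0$-Riemannian structure on $\partial M$.

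The main obstacle is case (B): rigorously identifying the starting direction of the limit geodesic with $\xi$, which requires a careful analysis of the convergence of tangent cones along $pq_i$, or an equivalent direct comparison between the almost-minimizing concatenation and the true minimizing geodesic $p x_i$. A secondary technical point is to verify that the boundary-tangent geodesics truly stay entirely in $\partial M$, which underpins both the total geodesicness and the $C^0$-Riemannian structure claims.
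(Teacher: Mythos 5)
Your overall strategy is the same as the paper's (points $q_i$ on a geodesic in direction $\xi$, the two-case split according to where the obtuse angle occurs, limits of the long geodesics, approximation for general directions, and the observation that at a boundary point an interior direction admits no opposite direction), and your case (A) is essentially the paper's case (1). But the step you yourself flag in case (B) is a genuine gap, and the inequality you offer toward it is stated in the wrong direction. Under a lower curvature bound the comparison inequality is $\angle\ge\wangle$ (see \eqref{eq:angle vs comp angle}), so a hinge angle close to $\pi$ at $q_i$ yields an \emph{upper} bound on $|p,x_i|$, not the near-additivity $|p,x_i|\ge|p,q_i|+|q_i,x_i|-\epsilon_i$; on a round sphere the angle can equal $\pi$ while the concatenation is far from minimizing. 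Moreover, even a vanishing excess would not pin down the initial direction of the limit geodesic, since for that one needs the excess to be small compared with $|p,q_i|\to 0$. The identification you leave open is exactly where the paper's proof does its decisive work, with a trick available to you because your $q_i$ also lie on $\gamma$: since $q_i$ is interior to the segment of $\gamma$ from $p$ to the fixed point $q_1=\gamma(t_1)$, one has $\angle(\uparrow_{q_i}^{p},\uparrow_{q_i}^{q_1})=\pi$; combined with $\angle(\Uparrow_{q_i}^{p},\uparrow_{q_i}^{x_i})>\pi-\epsilon_i$ and the fact that three directions in $\Sigma_{q_i}$ have pairwise angles summing to at most $2\pi$, this gives $\angle(\uparrow_{q_i}^{q_1},\uparrow_{q_i}^{x_i})<\epsilon_i$; then \eqref{eq:angle vs comp angle} shows that the point $y_i\in q_ix_i$ with $|q_i,y_i|=|q_i,q_1|$ satisfies $|y_i,q_1|\to0$, so the limit of the geodesics $q_ix_i$ is a minimal geodesic from $p$ of length $\ge R/2$ passing through $q_1$ (and likewise through every $\gamma(t_j)$), i.e.\ extending $\gamma$. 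Without an argument of this kind, case (B) --- and with it the boundary statement that relies on it --- is not proved.

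Two further points. For the boundary assertion, an arbitrary interior direction $\xi\in\Sigma_p\setminus\partial\Sigma_p$ need not be the direction of any geodesic, so you cannot place the $q_i$ on a geodesic in direction $\xi$; the paper instead takes points $p_i$ with $\uparrow_p^{p_i}\to\xi$ and passes to the limit of the broken geodesics $pp_ix_i$ (your remark that case (A) is impossible is the right reason the obtuse angle must occur at $p_i$; it rests on the rigidity fact that a direction at distance $\pi$ from $\xi$ would make $\Sigma_p$ a spherical suspension with poles in $\partial\Sigma_p$). Finally, your derivation of total geodesicity is not valid as written: in an Alexandrov space two minimal geodesics meeting at angle $\pi$ need not concatenate to a geodesic (at an interior edge point of the double of a flat disk one can choose such a pair whose concatenation is not even locally minimizing), and ``$M$ has no other side into which the line could bend'' is not a proof that the resulting segment lies in $\partial M$; compare the proof of Theorem \ref{thm:ob infty = pi/2}, where the corresponding statements about lines through boundary points are established by a separate argument rather than by such a heuristic.
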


%
%

%

\begin{proof}
	For any $p\in M$ and $\xi\in\Sigma_p^0$, let $\gamma$ be the geodesic from $p$ in the direction $\xi$.
	Take $q_i\in\gamma$ with $|p,q_i|\to 0$.
	By the assumption  $\mathrm{ob}(M) = \pi/2$, there exists a sequence $x_i \in M$ such that one of the following holds:
	\begin{enumerate}
		\item $\angle(\Uparrow_p^q, \uparrow_p^x)  > \pi - \epsilon_i$ and $|x_i, p| \ge R/2;$
		\item $\angle(\Uparrow_q^p, \uparrow_q^x)  > \pi- \epsilon_i$ and $|x_i, q_i| \ge R/2$,
	\end{enumerate}
	where $\epsilon_i \to 0$.
	Suppose $(1)$. Then $\{ \uparrow_p^{x_i}\}$ is a Cauch sequence and the geodesic $px_i$ converges to 
	a geodesic $\sigma$ such that $\angle(\gamma,\sigma)=\pi$.
	
	Next suppose $(2)$, and take $y_i \in q_ix_i$ such that $|q_i,y_i| =|q_i, q_1|$.
	Since $\angle y_iq_iq_1<\e_i$, $q_ix_i$ converges to a geodesic, say $px$, which extends $\gamma$.
	Thus we conclude that $1$-${\rm inj}(p;\xi)\ge R/2$.
	
	If $\xi$ is any element of $\Sigma_p$, we use a standard limiting argument to obtain t he conclusion.
	
	Next suppose  $p \in \partial M$ and $\xi \in \Sigma_p$. 
	If $\xi$ is an interior direction, then there is no opposite direction to $\xi$. 
	Take $p_i$ with $|p,p_i|\to 0$ and $\uparrow_p^{p_i}\to \xi$.
	It follows that there exists $x_i$ such that 
	$\wangle p p_i x_i > \pi -\e_i$ and  $|p_i, x_i|\ge R/2$ with $\lim \e_i = 0$.
	Therefore the broken geodesic $pp_ix_i$ converges to a minimal geodesic in the
	direction $\xi$ of length $\ge R/2$.
	Next sssume $\xi \in \partial \Sigma_p$ and take a sequence of interior directions 
	$\xi_i \in \Sigma_p \setminus \partial \Sigma_p$ converging to $\xi$. 
	Then the sequence of minimal geodesics of length $\ge R/2$ tangent to $\xi_i$ 
	converges to a minimal geodesic tangent to $\xi$ of  length $\ge R/2$.
	Thus we conclude that  $1$-${\rm inj} (M)\ge R/2$. This completes the proof. 
\end{proof}

In the noncompact case, by an argument similar to the proof of Theorem \ref{thm:maximal}, we get
the following. 

\begin{theorem} \label{thm:ob infty = pi/2}
	If a noncompact Alexandrov $n$-space $M$ of curvature $\ge \kappa$ has 
	$\mathrm{ob}_{\infty}(M) = \pi/2$, then  
	$1$-${\rm inj}(M) =\infty$.
	\par
	Suppose additionally that  $M$ has nonempty boundary. Then 
	for every $p\in \partial M$ and every $\xi\in\Sigma_p$,  
	there exists a  geodesic ray in the direction $\xi$. 
	In particular, $M$ is homeomorphic to $\mathbb R^n_+$, and
	any distinct two points of $\partial M$ are on  a line  of $M$ which is contained  in $\partial M$.
	%
	%
	
\end{theorem}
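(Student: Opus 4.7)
The plan is to adapt the proof of Theorem \ref{thm:maximal} to the noncompact setting: the bound $R/2$ on the length of the extending geodesic is replaced by arbitrarily large values, made possible by the $\limsup_{x \to \infty}$ appearing in the definition of $\mathrm{ob}_\infty$. Fix $p \in M$ and $\xi \in \Sigma_p^0$; let $\gamma$ be a geodesic from $p$ in direction $\xi$ and pick $q_i \in \gamma$ with $|p, q_i| \to 0$. Since each $\mathrm{ob}_\infty(p, q)$ lies in $[0, \pi/2]$ and $\mathrm{ob}_\infty(M) = \pi/2$, we have $\mathrm{ob}_\infty(p, q_i) \to \pi/2$. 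Unpacking the definition and using a diagonal argument yields $x_i \in M$ with $|x_i, p| \to \infty$ such that, after passing to a subsequence, either
\[
\angle(\Uparrow_p^{q_i}, \uparrow_p^{x_i}) \to \pi \qquad \text{or} \qquad \angle(\Uparrow_{q_i}^p, \uparrow_{q_i}^{x_i}) \to \pi.
\]
In the first case, $\uparrow_p^{x_i}$ approaches the direction opposite to $\xi$, and the geodesics $p x_i$, now of length tending to infinity, converge via Arzel\`a--Ascoli to a ray at $p$ in direction $-\xi$. In the second case, as in the compact proof, choosing $y_i \in q_i x_i$ with $|q_i, y_i| = |q_i, q_1|$ makes the broken geodesics $p q_i y_i$ converge to a ray at $p$ extending $\gamma$ in direction $\xi$. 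Either way $1$-$\mathrm{inj}(p; \xi) = \infty$; approximating a general $\xi \in \Sigma_p$ by $\Sigma_p^0$-directions completes the first assertion.

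For the boundary refinement, suppose $p \in \partial M$ and $\xi \in \Sigma_p$. If $\xi \in \Sigma_p \setminus \partial \Sigma_p$ is an interior direction, then no $\eta \in \Sigma_p$ satisfies $\angle(\xi, \eta) = \pi$, as the opposite of an interior direction at a boundary point does not exist. Should the first case above occur, a convergent subsequence of $\uparrow_p^{x_i}$ in the compact space $\Sigma_p$ would produce such an $\eta$, a contradiction; hence the second case must prevail and yields a ray precisely in direction $\xi$. For $\xi \in \partial \Sigma_p$, approximate by interior directions $\xi_k \to \xi$, each of which admits a ray by the case just treated, and extract a limiting ray in direction $\xi$ via Arzel\`a--Ascoli. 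This establishes the existence of a ray in every direction at every boundary point.

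The topological consequences follow from this abundance of rays. The generalized exponential map $\exp_p \colon (\Sigma_p \times [0,\infty))/(\Sigma_p \times \{0\}) \to M$ at a boundary point $p \in \partial M$, sending $(\xi, t)$ to the point at distance $t$ on the ray in direction $\xi$, is well-defined and continuous by the above, surjective since any $y \in M$ is connected to $p$ by a geodesic that extends to a ray through $y$, and injective by non-branching of geodesics. Invariance of domain then upgrades $\exp_p$ to a homeomorphism, and identifying $\Sigma_p$ with the round hemisphere $\mathbb{S}^{n-1}_+$ recognizes the source as $\mathbb{R}^n_+$. For the line statement, given distinct $p, q \in \partial M$, use the double $\tilde M$ of $M$ along $\partial M$, a complete boundary-less Alexandrov $n$-space of curvature $\ge \kappa$ with an isometric $\mathbb{Z}_2$-involution fixing $\partial M$: rays at $p$ in directions $\xi \in \partial \Sigma_p$ are involution-invariant by non-branching and hence lie in $\partial M$. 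Concatenating the ray from $p$ in direction $\uparrow_p^q$ (which one shows lies in $\partial \Sigma_p$) with the opposing ray from $q$ yields a locally geodesic curve $\ell \colon \mathbb{R} \to \partial M$ with no angle at $p$ or $q$. The main obstacle is twofold: identifying $\Sigma_p$ with $\mathbb{S}^{n-1}_+$ to obtain the homeomorphism, and promoting $\ell$ to an isometric embedding of $\mathbb{R}$ rather than merely a locally geodesic curve; both should hinge on a Busemann function or asymptotic rigidity argument leveraging the wealth of rays produced in the second assertion.
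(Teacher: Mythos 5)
Your handling of the first two assertions ($1$-${\rm inj}(M)=\infty$ and the existence of a ray in every direction at a boundary point) is essentially what the paper does: it too just adapts the proof of Theorem \ref{thm:maximal}, with $R/2$ replaced by arbitrarily large radii coming from the $\limsup_{x\to\infty}$, and omits the details. The genuine gap is in the final assertion, which is exactly the part the paper proves in full: you only produce a locally geodesic concatenation $\ell$ through $p$ and $q$ and then defer its global minimality (and the identification $\Sigma_p\cong\mathbb{S}^{n-1}_+$) to an unspecified ``Busemann function or asymptotic rigidity argument.'' As it stands, the line statement is not proved. No such machinery is needed; the paper's argument is short distance bookkeeping with the rays you already constructed. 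Let $\gamma$ be the ray from $p$ through $q$, $\sigma$ the ray from $q$ through $p$, and take $r=\gamma(t_1)$ beyond $q$, $s=\sigma(t_2)$ beyond $p$. Computing along the two rays gives $|s,p|+|p,r|=|s,q|+|q,r|$. Next let $\beta$ be the ray from $s$ through $p$: since the geodesic $sq$ extends beyond both endpoints inside rays, it is unique, and non-branching forces $\beta$ to contain $sq$ and then to coincide with $\gamma$ beyond $q$, so $\beta$ passes through $r$. Applying the same two-ray identity to the pair $(p,\beta(t_3))$ for small $t_3>0$ (using the sub-ray of $\beta$ from $\beta(t_3)$ through $p$ and $r$, and the ray from $p$ through $\beta(t_3)$, which by non-branching passes through $s$) yields $|s,\beta(t_3)|+|\beta(t_3),r|=|s,p|+|p,r|$; letting $t_3\to 0$ gives $|s,r|=|s,p|+|p,r|=|s,p|+|p,q|+|q,r|$, i.e.\ the concatenation is a genuine line. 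This is the idea your proposal is missing.

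Two smaller points. First, the claim that the line lies in $\partial M$ (equivalently that $\gamma$, $\sigma\subset\partial M$) also needs justification; the paper asserts it, leaning on the total geodesy of $\partial M$ from Theorem \ref{thm:maximal}, and your $\mathbb{Z}_2$-double sketch is a plausible route but is not carried out (in particular you must check that the rays in question are geodesics of $D(M)$ before invoking non-branching there). Second, for $M\cong\mathbb{R}^n_+$ your construction of $\exp_p$ as a homeomorphism onto $M$ is fine (continuity, injectivity and surjectivity all follow from non-branching and properness, with no need for invariance of domain), but the remaining identification of $\Sigma_p$ with $\mathbb{S}^{n-1}_+$, which you leave open, is where the actual content sits; the paper disposes of it by reference to Theorem \ref{thm:maximal} rather than by any Busemann-type argument.
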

\begin{proof}
	The proofs of the conclusions except the last statement  are similar to those of Theorem \ref{thm:maximal}, and hence omitted.
	We prove only the last statement. 
	Suppose that $M$ has  nonempty boundary, and
	let $p, q \in \partial M$ be distinct points. 
	By the conclusion  (1), there exist geodesic rays $\gamma : [0, \infty) \to  M$, $\sigma : [0, \infty) \to M$ such that $\gamma(0) = p = \sigma(|p,q|)$ and $\sigma(0) = q = \gamma(|p,q|)$. Note that $\gamma$ and $\sigma$ are contained in $\p M$.
	We show that the curve $\alpha : \mathbb R \to \partial M$ defined by $\alpha(t) = \sigma(-t + |p,q|)$ if $t \le 0$ and $\alpha(t) = \gamma(t)$ if $t \ge 0$, becomes a line.
	Let $r = \gamma(t_1)$ and $s = \sigma(t_2)$ with $t_1, t_2 > |p,q|$. 
	From the definition, we have 
	\begin{equation} \label{eq:line}
	|s,p| + |p,r| = |s,q|+|q,r|. 
	\end{equation}
	On the other hands, let $\beta$ be the ray with $\beta(0) = s$ and $\beta(|s,p|) = p$. 
	Since a geodesic between $s$ and $q$ is unique,  the intersection of $\beta$ and $\sigma$ is the geodesic $sq$.
	Let $t_3 \in (0,|s,q|)$. 
	From the same discussion as the one to obtain \eqref{eq:line}, we get $|s, \beta(t_3)| + |\beta(t_3), r| = |s, p| + |p, r|$. 
	Letting $t_3 \to 0$, we have $|s,r| = |s,p|+|p,r|$. 
	Hence, we see that $\alpha$ is a line. 
	
	From the conclusion $(1)$, for any $p\in \p M$, the exponential map
	$\exp_p:T_p M\to M$ is defined and provides a homeomorphism between 
	$M$ and $\mathbb R^n_+$. This completes the proof.
\end{proof}


Let $M$ be a surface of revolution with vertex $p_0$ homeomorphic to $\mathbb R^2$
having Riemannian metric 
\[
g=dr^2 + m(r)^2\,d\theta^2,
\]
with respect to a polar coordinates $(r, \theta)$ around $p_0$.
Note that 
\[
m(0)=0, \quad m'(0) = 1, \quad m'' +Km = 0.
\]
We assume that 
\begin{enumerate}
	\item the Gaussian curvature $K$ of $M$ is nonnegative;
	\item the total curvature is at most  $\pi$:
	\[
	\int_M\, K\, dM  \le  \pi.
	\]
\end{enumerate}

Note that the ideal boundary $M(\infty)$ of $M$ is a circle of length 
$\displaystyle{2\pi - \int_M\, K\, dM  \ge\pi}$ (see \cite{Shio:ideal}).

\begin{example} \label{ex:hyperboloid}
	{\rm As an example, consider the hyperboloid $M_a$ defined by 
		\[
		z=a\sqrt{x^2+y^2+1}.
		\]
		Then its asymptotic cone $(M_a)_{\infty}$ is written as 
		\[
		(M_a)_{\infty} = \{  z=a\sqrt{x^2+y^2} \}.
		\]
		Therefore $M_a$ satisfies all the above assumptions
		when  $0\le a\le\sqrt{3}$.
	}
\end{example}

The following Theorem \ref{thm:Mongoldt}
shows that 
Theorem \ref{thm:rigid} is sharp.

\begin{theorem} \label{thm:Mongoldt}
	Le $M$ be a complete open surface of revolution having  nonnegative Gaussian curvature such that 
	\[
	\int_M\, K\, dM  \le  \pi.
	\]
	Then ${\rm ob}_{\infty}(M)=\pi/2$. 
\end{theorem}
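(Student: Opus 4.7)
Since $\mathrm{ob}_{\infty}(M)\le \pi/2$ is immediate from the definition, the task is to establish $\liminf_{|p,q|\to 0}\mathrm{ob}_{\infty}(p,q)\ge \pi/2$. The strategy is to exploit the rotational symmetry of $M$: for any pair $p,q$ sufficiently close to each other we will exhibit a sequence $x_i\to \infty$ for which one of $\angle(\uparrow_p^q,\uparrow_p^{x_i})$ or $\angle(\uparrow_q^p,\uparrow_q^{x_i})$ tends to $\pi$, using a ray either from $p$ or from $q$.

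The geometric heart of the argument is the following claim: for every $p\in M$, the set $R_p\subset \Sigma_p\cong S^1$ of initial directions of forward rays from $p$ contains the closed semicircle $[-\pi/2,\pi/2]$, where $\Sigma_p$ is parameterized so that $\xi=0$ is the outward meridian direction at $p$. Indeed, $\int_M K\,dM\le \pi$ is equivalent via Gauss--Bonnet to $a:=\lim_{r\to\infty}m'(r)\ge 1/2$, so the asymptotic cone of $M$ has cross-sectional circumference $2\pi a\ge \pi$. For $|\xi|\le \pi/2$ the Clairaut invariant $L=m(r_0)\sin\xi$ satisfies $|L|\le m(r_0)\le m(r)$ for all $r\ge r_0$, so the geodesic from $p$ in direction $\xi$ admits no inward turning point and proceeds monotonically outward, becoming asymptotically parallel to a meridian. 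A Toponogov comparison with the asymptotic cone (of cone angle $\ge \pi$) shows this geodesic is length-minimizing from $p$, i.e.\ a ray. By reflection symmetry across the meridian through $p$, $R_p$ is a closed arc $[-\beta,\beta]$ with $\beta\ge \pi/2$, so $R_p\cup(-R_p)=\Sigma_p$.

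Granting the claim, fix $p\ne q$ with $|p,q|$ small, and set $\xi:=\uparrow_p^q\in \Sigma_p$. Two cases arise. If $-\xi\in R_p$, take a ray $\gamma$ from $p$ in direction $-\xi$ and let $x_i:=\gamma(i)\to \infty$; then $\uparrow_p^{x_i}=-\xi$, so $\angle(\uparrow_p^q,\uparrow_p^{x_i})=\pi$. Otherwise $\xi\in R_p$; by continuous dependence of the ray-set on the basepoint, there exists a ray $\gamma_q$ from $q$ whose initial direction $\eta_q\in R_q$ converges to $\xi$ as $q\to p$. Under the natural identification of $\Sigma_q$ with $\Sigma_p$ (via parallel transport along $pq$), one has $\uparrow_q^p\to -\xi$, and therefore $\angle(\uparrow_q^p,\eta_q)\to \pi$ as $q\to p$. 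In either case $\mathrm{ob}_{\infty}(p,q)\ge \pi/2-o(1)$ as $|p,q|\to 0$, yielding the required lower bound.

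The main obstacle is the rigorous verification that the outward-moving geodesic with $|\xi|\le \pi/2$ is indeed a ray on $M$, not merely on the asymptotic cone. Clairaut's relation precludes any inward turning point, but excluding shortcuts requires a Toponogov-type cone-comparison that exploits both the monotonicity $m'(r)\ge a$ and the cone angle $\ge \pi$ at infinity; the tightest case is $a=1/2$ with $|\xi|=\pi/2$, which corresponds to the extremal hyperboloid of Example~\ref{ex:hyperboloid}.
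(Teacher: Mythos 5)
Your reduction of the hypothesis to $\lim_{r\to\infty}m'(r)\ge 1/2$ and the Clairaut observation that a geodesic leaving $p$ at angle $\le\pi/2$ from the outward meridian has no inward turning point are correct, and you have identified exactly the right key claim (the paper proves precisely that $\gamma_t$ is a ray for all $|t|\le\pi/2$). But you do not prove that claim: you assert that ``a Toponogov comparison with the asymptotic cone (of cone angle $\ge\pi$) shows this geodesic is length-minimizing,'' and then concede in your final paragraph that this verification is the main obstacle. That is a genuine gap, and it is where essentially all of the work in the paper's proof lies. Toponogov's theorem compares with constant-curvature model surfaces, not with the asymptotic cone, and there is no off-the-shelf comparison that converts ``monotonically outward, asymptotically meridian-like'' into ``globally minimizing''; as you yourself note, the case $\int_M K\,dM=\pi$, $\xi=\pm\pi/2$ is extremal, so any such comparison would have to be sharp. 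The paper instead argues as follows: it constructs limit rays $\mu_s$ asymptotic to the meridians $\sigma_s$, shows via Gauss--Bonnet applied to the region $D$ bounded by $\gamma_{\pm t_*}$ (using that the angles $\alpha_\pm(s)$ tend to $\pi/2$, which is itself justified by a strainer/comparison-angle argument) that the initial angle $t_*$ of $\mu_\pi$ satisfies $t_*\ge\pi-\frac12\int_M K\,dM\ge\pi/2$, and then shows by monotonicity and injectivity of $t\mapsto\theta_t(\infty)$ (via the Clairaut constant and the formula for $d\theta/dr$) that every $\gamma_t$ with $|t|\le\pi/2$ coincides with some limit ray $\mu_{\theta_t(\infty)}$, hence is a ray. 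None of this is replaceable by a one-line comparison statement, so your proposal is missing its central step.

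There is also a secondary problem in your case analysis. When $-\xi\notin R_p$ you invoke ``continuous dependence of the ray-set on the basepoint''; ray directions are only upper semicontinuous in the basepoint in general, so lower semicontinuity needs justification, and even granted it your argument is pointwise in $p$ with no rate, whereas $\mathrm{ob}_\infty(M)=\liminf_{|p,q|\to 0}\mathrm{ob}_\infty(p,q)$ requires a bound uniform over all pairs at small distance; moreover $\angle(\Uparrow_q^p,\cdot)$ involves all minimal geodesics $qp$, not a chosen one. In fact this detour is unnecessary: granting the main claim, $-\xi\notin R_p$ forces $\xi$ to make angle $<\pi/2$ with the outward meridian, so the geodesic $pq$ extends to a ray through $q$ and far points on it make angle exactly $\pi$ with $\uparrow_q^p$. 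The paper avoids all of these issues by first removing the ball of poles (of radius $r(M)>0$) and then estimating with comparison angles and the explicit Clairaut bound $\zeta(0)\le\sqrt{|p,q|/m(r(M))}$, which yields $\mathrm{ob}_\infty(p,q)\ge\pi/2-\sqrt{|p,q|/m(r(M))}$ uniformly and is insensitive to non-uniqueness of $pq$. You should either reproduce such a quantitative, uniform estimate or restructure the case analysis as above; as written, both the minimality of the outward geodesics and the passage to the liminf are unproved.
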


\begin{proof}
	First we recall the description of geodesics in $M$.
	Let $(r(s), \theta(s))$ be the coordinates of a unit speed geodesic $\gamma(s)$ on $M$, and
	$\zeta=\zeta(s)$ be the angle, $0\le\zeta\le\pi$, between $\gamma$ and the positive direction of the parallel 
	circle  $r={\rm constant}$. The Clairaut relation states that 
	\begin{align}
	m(r(s))\cos \zeta(s)= {\rm constant}=\nu, \label{eq:Clair}
	\end{align}
	where $\nu$ is called the Clairaut constant of $\gamma$.
	Moreover we have 
	\begin{align}
	\frac{d\theta}{dr} = \frac{\theta'}{r'} = \e \frac{\nu}{m(r)\sqrt{m^2(r) - \nu^2}},  \label{eq:theta/r}
	\end{align}
	where $\e =\pm 1$ is determined by the sign of  $r'$ (see \cite[Proposition 7.1.3]{SST}).
	
	Let $L(t)$ denote the length of geodesic sphere $S(p_0, t):=\partial B(p_0,t)$. Since
	\[
	\lim_{t\to\infty}\,\frac{L(t)}{t} = L(M(\infty)) >0,
	\]
	we have $\int_1^{\infty} \frac{dt}{L^2(t)}\,<\infty$. It follows from \cite[Theorem 7.2.1]{SST}  
	that the set of poles of $M$ coincides with  a closed ball around $p_0$ of positive radius $r(M)>0$.
	Therefore for every $p, q\in M$ if one of $p,q$ is contained in $B(p_0, r(M))$, then obviously
	we have ${\rm ob}_{\infty}(p,q)=\pi/2$.
	
	Therefore in the below, we assume that $p,q\in M\setminus B(p_0, r(M))$.
	Let $A_p$ denote the set of velocity vectors $v\in \Sigma_p$
	of the geodesic rays emanating from $p$.
	Let us first show  that 
	$A_p$ contains a closed arc of length $2\pi - \int_M \,K\,dM \ge \pi$.
	Let  $m(A_p)$ denote the measure of $A_p$. 
	By the result due to Maeda \cite{Ma}, we know that 
	\[
	\inf_{p\in M} \, m(A_p) = 2\pi -  \int_M\, K\, dM\ge \pi.
	\]
	From this point of view, the claim is likely to be true. 
	In the argument below, we confirm this.
	

	We may assume  that $(r(p), \theta(p))=(r_0, 0)$ and $r_0>r(M)$.
	Let $\xi_0\in\Sigma_p$ (resp. $\eta_0\in\Sigma_p$) denote the positive direction of the meridian through $p$
	(resp. the positive direction of the parallel circle through $p$).
	For each $t\in [-\pi,\pi]$, we let 
	\begin{align*}
	\xi_t  = \cos t \cdot \xi_0 + \sin t  \cdot\eta_0
	\end{align*}
	Denote by $\gamma_t$ the geodesic from $p$ such that $\gamma_t'(0) =\xi_t$.
	For each $s\in [-\pi,\pi]$, we let $\sigma_s$ be the geodesic ray from $p_0$ that is 
	equal to the meridian with $\theta(\sigma_s)=s$, and 
	take a sequence $t_i\to\infty$ and a minimal geodesic
	$\mu_{s,i}$ joining $p$ to $\sigma_s(t_i)$.
	When $s=\pi$, we choose $\mu_{\pi,i}$ in such a way that $0\le \theta(\mu_{\pi,i}(t))\le\pi$
	for all $t\ge 0$.
	Then a subsequence of $\mu_{s,i}$ converges to 
	a geodesic ray $\mu_s$ from $p$ satisfying 
	\begin{enumerate}
		\item $0\le \theta(\mu_s(t_1)) < \theta(\mu_s(t_2)) < s$ for all $0\le t_1<t_2<\infty;$
		\item $\lim_{t\to\infty} \theta(\mu_s(t))=s$.
	\end{enumerate}
	Take  $t_{*}\in (0,\pi]$ such that $\xi_{t_*}=\mu_{\pi}'(0)$. We claim that
	\begin{align}           
	t_{*}  \ge  \pi - \frac{1}{2}\int_M\,K\,dM  \ge    \pi/2. \label{eq:t_*}
	\end{align}
	Let $D$ denote the domain bounded by the two geodesic rays $\gamma_{t_{*}}$ and $\gamma_{-t_{*}}$
	such that $p_0\in D$.
	Let $\lambda_s:[0,d_s]\to M$ be a minimal geodesic from $\gamma_{t_*}(s)$ to $\gamma_{-t_*}(s)$.
	Note that both  $\gamma_{t_{*}}$ and $\gamma_{-t_{*}}$ are asymptotic to $\sigma_{\pi}$ by symmetry,
	and hence  
	\begin{align}
	\lim_{s\to \infty} |\gamma_{\pm t_*}(s), \sigma_{\pi}(s)|/s = 0.  \label{eq:asympt=0}
	\end{align}
	It follows that $\lambda_s$ is contained in $D$ for large enough $s>0$.
	Let 
	\begin{align*}
	\alpha_{+}(s):=\angle \gamma_{-t_*}(s)  \gamma_{t_*}(s) p, \,\,\,\,\, \alpha_{-}(s):=\angle \gamma_{t_*}(s)  \gamma_{-t_*}(s) p,\\ 
	\tilde\alpha_{+}(s):=\tilde\angle \gamma_{-t_*}(s)  \gamma_{t_*}(s) p, \,\, \,\,\,
	\tilde\alpha_{-}(s):=\tilde\angle \gamma_{t_*}(s)  \gamma_{-t_*}(s) p.
	\end{align*}
	In view of \eqref{eq:asympt=0}, considering $1$-strainers $(p, \gamma_{t_*}(2s))$ at $\gamma_{t_*}(s)$ and 
	$(p, \gamma_{-t_*}(2s))$ at $\gamma_{-t_*}(s)$, we have 
	\[
	\lim_{s\to\infty}\,|\alpha_{\pm}(s) - \tilde\alpha_{\pm}(s)| = 0.
	\]
	Since $\lim_{s\to\infty} \tilde\alpha_{\pm}(s) = \pi/2$, we obtain  $\lim_{s\to\infty} \alpha_{\pm}(s) = \pi/2$.
	The Gauss-Bonnet theorem then 
	implies that 
	\begin{align*}
	\int_D \,K\,dM &= \lim_{s\to\infty} (\alpha_+(s) + \alpha_{-}(s) +\angle_p(D)-\pi) \\
	&=\angle_p(D)=2(\pi - t_{*})\le \int_M K\,dM,
	\end{align*}
	where $\angle_p(D)$ denotes the inner angle of $D$ at $p$.
	It follows that $t_*\ge \pi - \frac{1}{2}\int_M\,K\,dM \ge\pi/2$ as required.
	
	Now we show that $\gamma_t$ is a geodesic ray for each $t\in [-\pi/2, \pi/2]$.
	Let $\hat t$ denote the maximum of those $t \in [0, \pi/2]$ that $\gamma_s$ is a 
	geodesic ray for all $s\in [0,t]$. 
	It suffices to show that $\hat t = \pi/2$.
	Suppose that $\hat t<\pi/2$.
	Since $t_*\ge \pi/2>\hat t$ and both $\gamma_{\hat t}$ and $\gamma_{t_*}=\mu_{\pi}$ are geodesic rays, we have 
	$0\le \theta(\gamma_{\hat t}(s))<\pi$ for all $s\ge 0$.
	By \eqref{eq:Clair}, $\theta(\gamma_t(s))$ is monotone increasing in $s$,
	and therefore there is a unique limit
	\[
	\theta_t(\infty) :=\lim_{s\to\infty} \theta(\gamma_t(s)) \in [0,\pi]
	\]
	for every $t\in [0,\hat t]$.
	If $\theta_{\hat t}(\infty)=\pi$, in a way similar to \eqref{eq:t_*} we would have 
	$\hat t\ge \pi/2$, which is a contradiction. Thus we see $\theta_{\hat t}(\infty) < \pi$.
	%
	From continuity, there is some $\tilde t\in (\hat t, \pi/2)$ such that  
	\[
	0\le \theta(\gamma_{t}(s))<\pi,   \quad 0\le \theta_{t}(\infty) < \pi,
	\]
	for any $0\le t\le \tilde t$ and  all   $s\ge 0$.
	%
	%
	%
	%
	%
	Obviously $\theta_t(\infty)$ is continuous in $t\in [0, \tilde t]$.
	For $0\le t_1 < t_2\le \tilde t$, let $\theta_i(s):=\theta(\gamma_{t_i}(s))$,
	and $\nu_i$ the Clairaut constants of $\gamma_{t_i}$ for $i=1,2$.
	Since $\nu_1<\nu_2$, the formula \eqref{eq:theta/r} implies that 
	$d\theta_1/dr < d\theta_2/dr$, and hence $\theta_{t_1}(\infty)<\theta_{t_2}(\infty)$.
	Thus  $\theta_t(\infty)$ is injective in $t\in [0, \tilde t]$.
	This yields that $\gamma_t$ coincides with the geodesic ray $\mu_{\theta_t(\infty)}$
	for all $t\in [0, \tilde t]$, which is a contradiction to the definition of $\hat t$.
	Thus we conclude that $\hat t = \pi/2$ and 
	$\gamma_t$ is a geodesic ray for every $t\in [-\pi/2,\pi/2]$
	by symmetry.
	
	Finally we show that ${\rm ob}_{\infty}(p,q) \ge \pi/2 -\tau(\delta)$ with  $\delta=|p,q|$ and 
	$\lim_{\delta\to 0} \tau(\delta)=0$.
	Take a minimal geodesic $\gamma:[0,\delta]\to M$ from $p$ to $q$.
	First assume that $r(q)=r(p)$. Since $\zeta(0)=\zeta(\delta)$ and $\zeta(\delta/2)=0$, 
	we have from \eqref{eq:Clair} 
	\begin{align}
	\cos \zeta(0) = \frac{m(r(\delta/2))}{m(r(0))}, \label{eq:frac-m}
	\end{align}
	where $|m(r(0))-m(r(\delta/2))|\le \frac{\delta}{2}m' \le \frac{\delta}{2}$ because of nonnegative curvature.
	It follows that 
	\[
	\left| 1 - \frac{m(r(\delta/2))}{m(r(0))}\right| \le \frac{\delta}{2m(r(M))}.
	\]
	Together with \eqref{eq:frac-m}, this yields 
	\[
	\zeta(0) \le \sqrt{  \frac{\delta}{m(r(M))}} =:\delta_1.
	\]
	
	Let $\gamma_{\pi/2}$ be the ray from $p$ defined above.
	We may assume that $\angle(\gamma_{\pi/2}'(0), \gamma'(0))=\zeta(0)$.
	For large enough $R>0$, we have 
	\begin{align*}
	\angle pq\gamma_{\pi/2}(R) &\ge \tilde\angle pq\gamma_{\pi/2}(R) \\
	& \ge \pi - \wangle qp\gamma_{\pi/2}(R) - \wangle p\gamma_{\pi/2}(R)q \\
	& \ge \pi -\zeta(0) - o_R   \ge \pi - \delta_1 - o_R,
	\end{align*}
	where $\lim_{R\to\infty} o_R = 0$, and hence ${\rm ob}_{\infty}(p,q) \ge \pi/2 -\delta_1$.
	
	Next assume $r(p) < r(q)$.　If  $\angle(\uparrow_p^q, \xi_0)\le \pi/2$,  then $p$ and $q$ are on a geodesic ray.
	In the other case, taking $p_1\in pq$ with $r(p)=r(p_1)$,  one can show that $\zeta(0)\le \delta_1$ 
	and 
	\[
	\angle p q \gamma_{\pi/2}(R)\ge\wangle p q \gamma_{\pi/2}(R)\ge \pi - \delta_1 - o_R
	\]
	by  a similar manner.
	Thus we conclude that ${\rm ob}_{\infty}(M)=\pi/2$.
\end{proof}

%
%

%
%
%

\begin{remark} \label{rem:rigid0} \upshape
	From the argument in the proof of Theorem \ref{thm:Mongoldt}, we directly obtain
	\[
	\widetilde{\rm ob}_\infty(M) = \pi/2,
	\]
	under the the same hypothesis as Theorem \ref {thm:Mongoldt}, where $\widetilde{\rm ob}_\infty(M)$
	is the  comparison obtuse constant  of $M$ from infinity defined in Section \ref{sec:compa-obtuse}.
	See also \eqref{eq:trivial2}.
\end{remark}

\begin{remark} \label{rem:rigid} \upshape
	Theorem \ref{thm:Mongoldt}
	shows that the estimate $1$-${\rm inj}(M(\infty))\ge \pi/2$ in Theorem \ref{thm:rigid} (1) is sharp.
	%
	It should also be noted that one  cannot expect that 
	$M(\infty)$ has no singular points in  Theorem \ref{thm:rigid} (1), because if one take 
	$N=M\times\mathbb R$, where $M$ is a non-flat open surface as in Theorem \ref{thm:Mongoldt}, 
	then  $\mathrm{ob}_{\infty}(N) = \pi/2$ and $N(\infty)$ is the spherical suspension over $M(\infty)$.
	Note that  $N(\infty)$ has the two singular points at the vertices of the suspension since
	the length of the circle $M(\infty)$ is less than $2\pi$.
\end{remark}

\begin{proof}[Proof of Theorem \ref{thm:rigid}]
	Theorem \ref{thm:rigid} (2) immediately follows from Theorem \ref{thm:ob infty = pi/2} and the splitting theorem.
	Suppose $M$ has no boundary. For $\e_i\to 0$ and $p\in M$, consider the asymptotic limit 
	\begin{align}
	\lim_{i\to\infty} (\e_i M, p) = (M_{\infty}, o),   \label{eq:alimit}
	\end{align}
	where the asymptotic cone $M_{\infty}$ is the Euclidean cone over $M(\infty)$.
	Identify $M(\infty)=M(\infty)\times \{ 1\}\subset M_{\infty}$.
	For every $\xi\in M(\infty)$ and every geodesic $\gamma:[0, \delta]\to M_{\infty}$ from 
	$\xi$, fix any  $0<a<\delta$, and set  $\eta:=\gamma(\delta)$ and $\xi_a=\gamma(a)$. 
	Take sequences $p_i, q_i$ and $x_i\in p_i q_i$ in $\e_i M$ such that 
	\[
	p_i\to \xi,\,\,\,  x_i\to \xi_a, \,\,\, q_i\to \eta,
	\]
	as $i\to\infty$ under the convergence \eqref{eq:alimit}.
	On the interior of a minimal geodesic $p_i x_i$, take points $y_{i,\alpha}$ such that $y_{i,\alpha}\to  x_i$
	as $\alpha\to\infty$. 
	%
	%
	From the assumption, for any sequences $R_i\to\infty$ and $o_i\to 0$, if $\alpha$ is large enough compared to $i$, 
	one can find points $z_i$ with $|x_i,z_i|\ge R_i/\e_i$ and either 
	\[
	%
	\tilde\angle z_ix_i y_{i,\alpha} >\pi - o_i    \,\,\, {\rm  or}   \,\,\, \tilde\angle z_i y_{i,\alpha} x_i>\pi - o_i.
	\]
	Letting  $i\to\infty$, we obtain a geodesic ray emanating from $\xi_a$ either in the direction $\gamma'(a)$
	or  in the  opposite direction $-\gamma'(a)$.
	Then letting $a\to 0$, we conclude that there is a geodesic ray $\sigma$ starting from $\xi$ such that 
	either  $\sigma'(0)=\gamma'(0)$ or else there is the opposite direction $-\gamma'(0)$ and $\sigma'(0)=-\gamma'(0)$.
	Thus we have  $1$-${\rm inj}(M_{\infty})=\infty$.
	
	Now for any direction $v\in\Sigma_{\xi}(M(\infty))\subset \Sigma_{\xi}(M_{\infty})$, there is a geodesic ray 
	$\sigma$ of $M_{\infty}$ starting from $\xi$ either in the direction $v$ or else  in the direction $-v$ if  $-v$ exists.
	For each $t\ge 0$, let $\xi_t:=\uparrow_o^{\sigma(t)}\in M(\infty)$.
	It is easy to see that there is a unique limit $\xi'=\lim_{t\to\infty}\,\xi_t$ and 
	$\angle (\xi,\xi')=\pi/2$.
	Thus $\xi_t$ provides a shortest segment  in $M(\infty)$ from $\xi$ to $\xi'$
	in the direction $v$ or $-v$ if any.
	This shows  $1$-${\rm inj}(M(\infty))\ge \pi/2$ as required. 
\end{proof}

\begin{conjecture} \label{conj:sing}\upshape
	For a compact Alexandrov space  $M$ with curvature bounded below, 
	if ${\rm ob}(M)=\pi/2$, then the double $D(M)$ would have no singular points.
	Similarly, for a noncompact Alexandrov space  $M$ with curvature bounded below, 
	if ${\rm ob_{\infty}}(M)=\pi/2$, then the double $D(M)$ would have no singular points as well
	(compare Section \ref{sec:compa-obtuse}).
\end{conjecture}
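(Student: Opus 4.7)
The plan is to proceed by induction on $n = \dim M$. The cases $n \le 1$ are immediate, so assume the conjecture for all dimensions less than $n$ and consider either a compact $M$ with $\mathrm{ob}(M) = \pi/2$ or a noncompact $M$ with $\mathrm{ob}_\infty(M) = \pi/2$. Fix a point $p \in M$. The goal is to show that $\Sigma_p$ is a round $(n-1)$-sphere when $p \in M \setminus \partial M$ and a round hemisphere when $p \in \partial M$; either conclusion is equivalent to $p$ being a regular point of $D(M)$, so this suffices.

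The first step is to transfer the maximality of the obtuse constant from $M$ to the tangent cone $T_pM$, which is the nonnegatively curved Euclidean cone over $\Sigma_p$. Select sequences $q_i \to p$ realizing $\mathrm{ob}(p, q_i) \to \pi/2$, together with accompanying points $x_i$ at distance $\ge R_M/2$ (respectively with $|x_i, p| \to \infty$ in the noncompact case) that supply the near-$\pi/2$ obtuse angle from the definition of $\mathrm{ob}$. Rescaling $(M, p)$ by the factor $1/|p, q_i|$ and passing to a Gromov--Hausdorff limit, the images of $q_i$ converge to a unit vector in $T_pM$ while the images of $x_i$ escape to infinity; upper semi-continuity of angles along the convergence carries the near-$\pi/2$ obtuseness to the limit, yielding $\mathrm{ob}_\infty(T_pM) = \pi/2$.

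The second step exploits the cone structure. The ideal boundary of the Euclidean cone on $\Sigma_p$ is canonically isometric to $\Sigma_p$, and the scale-invariance of the cone allows one to translate obtuse configurations toward infinity at a basepoint $u \neq o$ of $T_pM$ into obtuse configurations on the cross-section $\Sigma_p$, via the radial decomposition of angles at $u$. Under this dictionary, $\mathrm{ob}_\infty(T_pM) = \pi/2$ forces $\mathrm{ob}(\Sigma_p) = \pi/2$ for the compact $(n-1)$-dimensional Alexandrov space $\Sigma_p$ of curvature $\ge 1$. The inductive hypothesis then tells us that $D(\Sigma_p)$ has no singular points, hence is isometric to $\mathbb{S}^{n-1}$; equivalently, $\Sigma_p$ itself is a round sphere (when $\partial\Sigma_p = \emptyset$, i.e.\ $p$ is interior) or a round hemisphere (when $p \in \partial M$), which is exactly the regularity of $p$ in $D(M)$.

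The hard part will be step one. The companion points $x_i$ sit at macroscopic distance $\ge R_M/2$ from $p$, while the blow-up rescales by the microscopic factor $1/|p, q_i| \to \infty$, so the two very different scales must be tracked coherently; moreover, both the ``at $p$'' and the ``at $q_i$'' branches of the maximum defining $\mathrm{ob}(p, q_i)$ must produce genuine obtuse configurations after the passage to the limit, and one must verify that this really delivers $\mathrm{ob}_\infty$ (rather than a degenerate surrogate) on $T_pM$. This calls for a uniform angle comparison argument in the spirit of Sections \ref{sec:main} and \ref{sec:volume-growth}. Once this cone transfer is secured, the cone-to-link correspondence of step two and the induction are essentially formal.
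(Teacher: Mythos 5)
First, a point of orientation: the statement you are proving is stated in the paper as a \emph{conjecture} (Conjecture \ref{conj:sing}); the paper offers no proof of it, and proves the corresponding rigidity only under the strictly stronger hypothesis $\widetilde{\rm ob}(M)=\pi/2$, resp.\ $\widetilde{\rm ob}_\infty(M)=\pi/2$, phrased in terms of \emph{comparison} angles (Theorem \ref{thm:rigidity1} and its noncompact analogue in Section \ref{sec:compa-obtuse}). Your step one fails exactly at the point that separates the two invariants. The hypothesis ${\rm ob}(M)=\pi/2$ gives you genuine angles $\angle(\Uparrow_p^{q_i},\uparrow_p^{x_i})$ close to $\pi$, but by \eqref{eq:angle vs comp angle} one only has $\angle\ge\wangle$: a genuine angle near $\pi$ carries no metric information, i.e.\ it does not force the concatenation $x_i\, p\, q_i$ to be nearly minimizing, whereas a \emph{comparison} angle near $\pi$ does. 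Consequently nothing survives the blow-up: under (pointed, rescaled) Gromov--Hausdorff convergence, comparison angles converge, and for genuine angles one only gets $\angle(\gamma,\sigma)\le\liminf_i\angle(\gamma_i,\sigma_i)$ for converging geodesics --- the limit angle can drop arbitrarily. So your appeal to ``upper semi-continuity of angles'' goes in the wrong direction; the near-$\pi$ angles at scale $|p,q_i|$ need not produce any obtuse configuration in $T_pM$, and the claimed conclusion ${\rm ob}_\infty(T_pM)=\pi/2$ is unsupported. This is precisely why the paper's proof of Theorem \ref{thm:rigidity1} works only with $\widetilde{\rm ob}$: there the condition $\wangle x_iy_iz_i\ge\pi-\epsilon_i$ is a distance statement, so the broken geodesics converge to rays in the tangent cone and one can run the splitting argument. (There is also a quantifier issue even on your own terms: ${\rm ob}_\infty(T_pM)=\pi/2$ requires the liminf over \emph{all} pairs of nearby points of $T_pM$, while your transfer only produces special configurations based at the origin and at one unit vector.)

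Step two has an independent gap. Even granting ${\rm ob}(\Sigma_p)=\pi/2$ and the inductive hypothesis, the conclusion you draw --- ``$D(\Sigma_p)$ has no singular points, hence is isometric to $\mathbb S^{n-1}$'' --- is a non sequitur: absence of singular points means every point of $D(\Sigma_p)$ is regular (its space of directions is a round sphere); it does not determine the global metric, and there are plenty of non-round Riemannian metrics of curvature $\ge 1$ with no singular points. But what regularity of $p$ in $D(M)$ actually requires is that $\Sigma_pM$ be globally isometric to $\mathbb S^{n-1}$ or $\mathbb S^{n-1}_+$, so your induction cannot close. Note that the paper's mechanism in Theorem \ref{thm:rigidity1} is different: it shows $\diam(\Sigma_p)=\pi$ directly from the (comparison-angle) hypothesis and then iterates the splitting theorem on $T_pM$, never passing through an inductive statement about the link. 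The cone-to-link ``dictionary'' in your second step (translating obtuseness at infinity of the cone into ${\rm ob}(\Sigma_p)=\pi/2$) is also asserted rather than proved, and for genuine angles at points $u\ne o$ of the cone it is not a formal consequence of the radial decomposition. As it stands, the proposal does not establish the conjecture.
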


\section{Comparison obtuse constants}　\label{sec:compa-obtuse}

Let $M$ be a compact $n$-dimensional Alexandrov space of curvature. 
Let $\kappa \le 0$. 
For $p \neq q \in M$, we set 
\[
\widetilde {\rm ob}_\kappa(p;q) := \sup_{x \in B(p,R_M/2)^c} \tilde \angle_\kappa xpq - \pi/2, 
\]
and 
\[
\widetilde {\rm ob}_\kappa(p,q) := \max \left\{ \widetilde {\rm ob}_\kappa(p;q), \widetilde{\rm ob}_\kappa(q;p) \right\}.
\]
Then, we define the {\it comparison obtuse constant} of $M$ by 
\[
\widetilde {\rm ob}(M) := \liminf_{|p,q|\to 0} \widetilde{\rm ob}_\kappa(p,q). 
\]
Note that this is independent of the choice of $\kappa$. 
A trivial relation 
\begin{equation} \label{eq:trivial1}
\widetilde {\rm ob}(M) \le {\rm ob}(M)
\end{equation}
follows from \eqref{eq:angle vs comp angle}. 
Hence, when $\widetilde {\rm ob}(M)$ is maximal, it is natural to be expected that $M$ has a stronger geometric property compared with the maximal case of $\mathrm{ob}(M)$.  
Indeed, we obtain 
\begin{theorem} \label{thm:rigidity1}
Let $M$ be a compact $n$-dimensional Alexandrov space. 
Suppose $\widetilde{\rm ob} (M) = \pi/2$. 
Then, $D(M)$ has no singular points, that is, for every $p \in M$, we have $\Sigma_p = \mathbb S^{n-1}$ or $\Sigma_p = \mathbb S^{n-1}_+$. 
Here, $\mathbb S_+^{n-1}$ denotes the upper half unit sphere.  
\end{theorem}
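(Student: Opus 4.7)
The plan is to show that for every $p \in M$, the space of directions $\Sigma_p$ is isometric to $\mathbb{S}^{n-1}$ (when $p \notin \partial M$) or $\mathbb{S}^{n-1}_+$ (when $p \in \partial M$); equivalently, $D(\Sigma_p)=\mathbb{S}^{n-1}$. By the iterated spherical-suspension characterization of the round sphere among $(n-1)$-dimensional Alexandrov spaces of curvature $\ge 1$, this reduces to producing an antipodal direction $-\xi$ in $\Sigma_p$ for every interior direction $\xi\in\Sigma_p$ (together with the analogous property on the boundary when $p\in\partial M$). The key leverage is that $\widetilde{\mathrm{ob}}(M)=\pi/2$ is strictly stronger than $\mathrm{ob}(M)=\pi/2$—the latter yields only geodesic extensions of length $\ge R_M/2$ by Theorem \ref{thm:maximal(intro)}—and I plan to use the comparison-angle closeness to $\pi$ to upgrade such extensions to genuine antipodes.

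Fix $p\in M$ and $\xi\in\Sigma_p$, and pick $q_i \to p$ with $\uparrow_p^{q_i}\to\xi$. By the hypothesis, $\widetilde{\mathrm{ob}}_\kappa(p,q_i)\to\pi/2$. After passing to a subsequence, one of two situations holds. In the first, there exist $x_i$ with $|x_i,p|\ge R_M/2$ and $\wangle_\kappa x_i p q_i\to\pi$. Using \eqref{eq:angle vs comp angle} ($\angle \ge \wangle$) together with compactness of $M$ and convergence of suitable minimal geodesics $px_i \to p x_\infty$, one obtains $\uparrow_p^{x_\infty}\in\Sigma_p$ with $\angle(\xi,\uparrow_p^{x_\infty})=\pi$, so $-\xi=\uparrow_p^{x_\infty}$ exists.

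In the second situation, $\wangle_\kappa x_i q_i p\to\pi$ with $|x_i,q_i|\ge R_M/2$. The comparison-triangle identity $|x_i,p|=|x_i,q_i|+|q_i,p|+o(1)$ forces $q_i$ onto minimal geodesics from $x_i$ to $p$; by non-branching, $q_i x_i$ extends $\gamma|_{[0,1/i]}$ forward (where $\gamma$ is a minimal geodesic in direction $\xi$), and taking limits yields a minimal geodesic $\bar\gamma$ from $p$ in direction $\xi$ of length $\ge R_M/2$. To still produce $-\xi$, I iterate the same dichotomy at $(p_t,\bar\gamma(t-1/i))$ with $p_t=\bar\gamma(t)$ for small $t\in(0,R_M/2)$; note $\uparrow_{p_t}^{\bar\gamma(t-1/i)}=-\bar\gamma'(t)$. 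If the second situation also arises for this iterated pair, non-branching shows the new minimal geodesic extends $\bar\gamma$ backward by $\ge R_M/2$, and for $t<R_M/2$ it continues past $p=\bar\gamma(0)$ by $R_M/2-t>0$, producing a minimal geodesic at $p$ in direction $-\xi$.

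The main obstacle is to rule out the possibility that only the first situation ever arises throughout the iteration, in which case only the already-known forward direction $\bar\gamma'(t)$ is produced at each $p_t$ and no antipode at $p$ is obtained. My plan is to show that persistent occurrence of the first situation forces, via non-branching, forward extensions of $\bar\gamma$ at every $p_t$ of length $\ge R_M/2$; iterating over progressively larger $t$, this forces $\bar\gamma$ to attain arbitrarily large length, contradicting $\diam(M)\le D$. Some care will be needed to ensure that the second situation first arises at some $t$ in the critical range $(0,R_M/2)$ where it yields $-\xi$, rather than only at larger $t$ where it merely halts the forward growth. Finally, for boundary points $p\in\partial M$ and boundary directions $\xi\in\partial\Sigma_p$, I would either pass to the double $D(M)$ (using that $\partial M$ is totally geodesic by Theorem \ref{thm:maximal}) or carry out the interior argument internally within $\partial M$, yielding $D(\Sigma_p)=\mathbb{S}^{n-1}$ and hence $\Sigma_p=\mathbb{S}^{n-1}_+$.
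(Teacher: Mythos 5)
Your reduction to ``every direction at an interior point has an antipode'' is fine in principle, and your Case 1 and Case 2 analyses at the pair $(p,q_i)$ are essentially the proof of Theorem \ref{thm:maximal}; the problem is the mechanism you propose for breaking the asymmetry, and the gap you flag yourself is fatal rather than a matter of ``care''. If the antipode $-\xi$ never appears, your iteration at $p_t=\bar\gamma(t)$ only forces the antipode-free alternative (your ``first situation'') at parameters $t<R_M/2$, and there it yields a minimal geodesic through $p$ in direction $\xi$ of length $t+R_M/2<R_M$; at parameters $t\ge R_M/2$ the ``second situation'' is harmless, since the limit geodesic merely retraces $\bar\gamma$ backwards for length $\ge R_M/2$, which neither reaches past $p$ (so no $-\xi$) nor forces any further forward growth. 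Hence the lengths you can produce saturate at about $R_M\le\diam(M)$, and the intended contradiction with $\diam(M)\le D$ never occurs; nothing in the hypothesis lets you force the second situation to occur in the critical range $(0,R_M/2)$. So the scheme, as described, outputs exactly $1$-${\rm inj}(M)\ge R_M/2$ and stops short of $\Sigma_p=\mathbb S^{n-1}$.

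There is also a structural reason this route cannot close: every step of your outline uses the hypothesis only through the implied true-angle statements ($\angle\ge\wangle$, or hinge comparison) at pairs lying on a geodesic through the fixed point $p$, and all of those consequences already follow from ${\rm ob}(M)=\pi/2$; for that weaker hypothesis the absence of singular points is only Conjecture \ref{conj:sing} in the paper. The extra strength of $\widetilde{\rm ob}(M)=\pi/2$ is that a comparison-angle condition is an excess (distance) condition and therefore survives rescaling, and the paper exploits exactly this: it applies the hypothesis to a pair $x_i,y_i$ \emph{both} tending to $p$, with $\uparrow_p^{x_i}\to\xi$, $\uparrow_p^{y_i}\to\eta$ where $\xi,\eta$ realize $\diam\Sigma_p$, and blows up to the tangent cone; the almost-straight broken geodesics $x_iy_iz_i$ converge to a ray in $T_pM$ issuing from $\xi$ through $\eta$, whose far points have directions at distance from $\xi$ exceeding $\angle(\xi,\eta)$ unless $\diam\Sigma_p=\pi$, and iterated splitting then gives $T_pM\cong\mathbb R^n$. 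You would need an idea of this kind (pairs straddling $p$, and a blow-up in which the comparison condition persists) rather than pairs anchored at $p$. Finally, your boundary-point reduction is also not free: passing to $D(M)$ requires knowing $\widetilde{\rm ob}(D(M))=\pi/2$, which you do not justify; the paper instead handles $p\in\partial M$ by quoting Theorem \ref{thm:maximal}.
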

\begin{proof}
Let $M$ be as in the assumption. 
When $p \in M$ is a boundary point, we already know that $\Sigma_p = \mathbb S^{n-1}_+$ (See Theorem \ref{thm:maximal}). 
Hence, we assume that $p$ is an interior point. 
Take $\xi, \eta \in \Sigma_p$ with $\angle (\xi, \eta) = \diam(\Sigma_p)$, and  
suppose that $\angle(\xi, \eta) < \pi$. 
Let us take sequences $x_i, y_i \in M$ such that $|p,x_i| = |p,y_i| \to 0$, $\uparrow_p^{x_i} \to \xi$ and $\uparrow_p^{y_i} \to \eta$. 
Since $\widetilde{\mathrm{ob}}(M) = \pi/2$, there exists a point $z_i \in M$ such that one of the following holds:
\begin{enumerate}
\item $\wangle x_i y_i z_i \ge \pi - \epsilon_i$ and $|y_i,z_i| \ge R/2;$
\item $\wangle y_i x_i z_i \ge \pi- \epsilon_i$ and $|x_i, z_i| \ge R/2$,
\end{enumerate}
where $\epsilon_i \to 0$.
By extracting a subsequence and by replacing  $x_i$ and $y_i$ if necessarily, 
we may assume 
(1) holds for all $i$.
Under the convergence of $(|p,x_i|^{-1} M, p)$ to the tangent cone $(T_p M, o_p)$, the sequence of broken geodesics $x_i y_i z_i$ converges to a ray starting from $\xi$ through  $\eta$. 
Now we can take a direction $\zeta \in \Sigma_p$ along the ray satisfying $\angle (\xi, \zeta) > \angle (\xi, \eta)$. 
Since this is a contradiction,  we have $\diam(\Sigma_p) = \pi$. 

By the splitting theorem, $T_p M$ is isometric to the product of the line $\ell$ through  $\xi, \eta$ and the space $T'$ of vectors perpendicular to $\ell$.
Let $\Lambda\subset \Sigma_p$ denote the set of directions tangent to $T'$. 
Then, we have $\diam (\Lambda) = \pi$. 
Indeed, if $\bar \xi, \bar \eta \in \Lambda$ attain the diameter of $\Lambda$, 
then taking sequences $\bar x_i, \bar y_i \to p$ with $|\bar x_i, p| = |\bar y_i, p|$ 
so that $\uparrow_p^{\bar x_i} \to \bar \xi$ and $\uparrow_p^{\bar y_i} \to \bar \eta$, 
we have a point $\bar z_i$ in a way similar to the above argument. 
Then, the limit ray of $\bar x_i \bar y_i \bar z_i$ (or $\bar y_i \bar x_i \bar z_i$)
under the convergence $(|p,\bar x_i|^{-1} M, p) \to (T_p M, o)$,
is contained in $T'$. 
The existence of such a ray enforces that $\diam(\Lambda) = \pi$,
and $T'$ is isometric to a product $\mathbb R\times T''$.
Repeating this argument, finally we obtain that $T_pM$ is isometric to $\mathbb R^n$. 
Therefore, $\Sigma_p = \mathbb S^{n-1}$. 
\end{proof}

For noncompact spaces, a {\it comparison obtuse constant from infinity} $\widetilde{\rm ob}_{\infty}(M)$ is also defined as follows. 
Let $M$ be a noncompact complete $n$-dimensional Alexandrov space of curvature $\ge \kappa'$, where $\kappa' \le 0$. 
For $\kappa \le 0$ and $p \neq q \in M$, we set 
\[
\widetilde{\rm ob}_{\kappa, \infty}(p,q) := \limsup_{x \to \infty} \max \{\tilde\angle_\kappa xpq, \tilde \angle_\kappa xqp \}. 
\]
Then, we define 
\[
\widetilde{\rm ob}_\infty(M) := \liminf_{|p,q| \to 0} \widetilde{\rm ob}_{\kappa, \infty}(p,q). 
\]
Clearly, $\widetilde{\rm ob}_\infty(M)$ is not depending on $\kappa$ and 
\begin{equation} \label{eq:trivial2}
\widetilde{\rm ob}_\infty(M) \le {\rm ob}_\infty(M). 
\end{equation}

By an argument similar to the proof of Theorem \ref{thm:rigidity1}, we have 
\begin{theorem}
Let $M$ be an $n$-dimensional noncompact complete Alexandrov space. 
Assume that $\widetilde{\rm ob}_\infty(M) = \pi/2$.  
Then $D(M)$ has no singular points.
\end{theorem}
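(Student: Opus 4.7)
The plan is to mirror the proof of Theorem \ref{thm:rigidity1}, with the difference that the auxiliary point $z$ is now taken to diverge to infinity rather than to lie in a compact annulus. By the trivial inequality \eqref{eq:trivial2}, the hypothesis yields $\mathrm{ob}_\infty(M)=\pi/2$, so Theorem \ref{thm:ob infty = pi/2} applies and, as in Theorem \ref{thm:rigidity1}, every boundary point $p\in\partial M$ satisfies $\Sigma_p=\mathbb S^{n-1}_+$, making $p$ nonsingular in $D(M)$. It therefore suffices to show $\Sigma_p=\mathbb S^{n-1}$ at every interior point $p$.

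Fix an interior point $p$ and choose $\xi,\eta\in\Sigma_p$ realizing $\alpha:=\mathrm{diam}(\Sigma_p)$. Suppose $\alpha<\pi$ for contradiction, and pick sequences $x_i,y_i\in M$ with $|p,x_i|=|p,y_i|\to 0$, $\uparrow_p^{x_i}\to\xi$ and $\uparrow_p^{y_i}\to\eta$. Since $\widetilde{\mathrm{ob}}_\infty(M)=\pi/2$ equals the maximal possible value of the liminf, one has $\widetilde{\mathrm{ob}}_{\kappa,\infty}(x_i,y_i)\to\pi/2$, hence
\[
\limsup_{x\to\infty}\max\bigl\{\tilde\angle_\kappa x x_i y_i,\tilde\angle_\kappa x y_i x_i\bigr\}\longrightarrow\pi.
\]
A diagonal argument then gives a sequence $z_i\in M$ with $|p,z_i|/|p,x_i|\to\infty$ such that, up to subsequence, $\tilde\angle_\kappa z_i x_i y_i\to\pi$. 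By \eqref{eq:angle vs comp angle}, the true angle $\angle z_i x_i y_i$ also tends to $\pi$, so the broken geodesic $y_i x_i z_i$ is asymptotically straight at $x_i$.

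Next, rescale by $\lambda_i:=|p,x_i|^{-1}$ and pass to the pointed Gromov--Hausdorff limit $(\lambda_i M,p)\to(T_pM,o)$. Under this convergence, $x_i\to\xi$ and $y_i\to\eta$ as unit vectors in $T_pM$, while $z_i$ escapes to infinity in the rescaled metric. A standard convergence-of-geodesics argument then provides a subsequential limit of $y_i x_i z_i$ that is a geodesic ray in $T_pM$ starting at $\eta$, passing through $\xi$, and continuing indefinitely past $\xi$. Since $\alpha<\pi$, the segment $\eta\xi$ avoids the apex $o$, so this ray lies in a two-dimensional flat sector of the Euclidean cone $T_pM$. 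An elementary computation in that sector shows that any point $\zeta$ on the ray strictly beyond $\xi$ satisfies $\angle(\uparrow_o^\zeta,\eta)>\alpha$, contradicting $\alpha=\mathrm{diam}(\Sigma_p)$ since $\uparrow_o^\zeta\in\Sigma_p$.

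Hence $\mathrm{diam}(\Sigma_p)=\pi$, and the splitting theorem gives $T_pM=\mathbb R\times T'$, with $\Sigma_p$ the spherical suspension over the set $\Lambda\subset\Sigma_p$ of directions tangent to $T'$. Running the same blow-up on a diameter-realizing pair in $\Lambda$ (the resulting ray lies in $T'$ because its initial data do) yields $\mathrm{diam}(\Lambda)=\pi$, and iterating $n$ times produces $T_pM\cong\mathbb R^n$, so $\Sigma_p=\mathbb S^{n-1}$. The main technical difficulty is the combined diagonal extraction and blow-up: one must arrange $|x_i,y_i|\to 0$, $\tilde\angle_\kappa z_i x_i y_i\to\pi$ and $\lambda_i|p,z_i|\to\infty$ at compatible rates, and verify that the long broken geodesic limits to an honest bi-infinite geodesic of $T_pM$ rather than to a degenerate curve.
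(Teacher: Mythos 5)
Your proposal is correct and follows essentially the same route as the paper: the paper's proof of this statement is literally ``an argument similar to the proof of Theorem \ref{thm:rigidity1}'', and your write-up is exactly that adaptation, sending the auxiliary point $z_i$ to infinity (using that the liminf defining $\widetilde{\rm ob}_\infty$ attains its maximal value $\pi/2$), blowing up at $p$ to get a ray in $T_pM$ extending the segment $\eta\xi$, contradicting $\diam(\Sigma_p)<\pi$, and then iterating with the splitting theorem; the boundary case via \eqref{eq:trivial2} and Theorem \ref{thm:ob infty = pi/2} likewise parallels the paper's use of Theorem \ref{thm:maximal} in the compact case. No substantive deviation or gap beyond what the paper itself leaves implicit.
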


\begin{conjecture} \upshape
Theorem \ref{thm:main-cpt1} and Corollary \ref{cor:cpt3} would hold for $\widetilde{\rm ob}(M)$ instead of ${\rm ob}(M)$ when $M$ is compact.
Theorem \ref{thm:main-noncpt} would hold for $\widetilde{\rm ob}_\infty(M)$ instead of ${\rm ob}_\infty(M)$ when $M$ is noncompact.  
\end{conjecture}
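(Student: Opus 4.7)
The plan is to mimic the proof of Theorem \ref{thm:rigidity1}, replacing the ``far'' points $x \in B(p,R_M/2)^c$ by sequences tending to infinity in $M$. The hypothesis together with \eqref{eq:trivial2} gives ${\rm ob}_\infty(M) = \pi/2$, so Theorem \ref{thm:ob infty = pi/2} applies: $1$-${\rm inj}(M) = \infty$, and if $\partial M \neq \emptyset$ then $M$ is homeomorphic to $\mathbb R^n_+$ with every pair of boundary points lying on a line contained in $\partial M$.

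If $p \in \partial M$, I would combine this with the splitting argument used for interior points below (applied to the directions tangent to $\partial M$, which always split off $\mathbb R$ factors since those directions possess opposites inside $\partial M$) to force $T_p M \cong \mathbb R^{n-1} \times [0,\infty)$ and hence $\Sigma_p M = \mathbb S^{n-1}_+$; in the double this yields $\Sigma_p D(M) = \mathbb S^{n-1}$. Thus the task reduces to interior points.

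Fix an interior $p$, take $\xi, \eta \in \Sigma_p$ realizing $\diam(\Sigma_p)$, and assume for contradiction that $\angle(\xi, \eta) < \pi$. Choose $x_i, y_i \in M$ with $|p, x_i| = |p, y_i| = r_i \to 0$, $\uparrow_p^{x_i} \to \xi$, $\uparrow_p^{y_i} \to \eta$. Since $\widetilde{\rm ob}_\infty(M) = \pi/2$, for each $i$ there is a sequence of points escaping to infinity along which $\max\{\tilde\angle_\kappa z x_i y_i, \tilde\angle_\kappa z y_i x_i\}$ approaches $\pi$; a diagonal extraction produces a single $z_i \in M$ with $|x_i, z_i|/r_i \to \infty$ and, after subsequence and possibly swapping $x_i \leftrightarrow y_i$, $\tilde\angle_\kappa z_i x_i y_i \to \pi$. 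Rescale by $r_i^{-1}$: the pointed spaces $(r_i^{-1} M, p)$ converge to the tangent cone $(T_p M, o)$, the curvature bound rescales from $\kappa$ to $r_i^2\kappa \to 0$, and $x_i, y_i$ converge to $\tilde x, \tilde y \in T_p M$ at distance one from $o$ along the rays through $\xi, \eta$. Because $|x_i, z_i|/r_i \to \infty$, a subsequence of the segments $x_i z_i$ Gromov--Hausdorff converges to a genuine ray $\tilde\sigma \colon [0,\infty) \to T_p M$ starting from $\tilde x$. By monotonicity of the $\kappa$-comparison angle, $\tilde\angle_\kappa w x_i y_i \ge \tilde\angle_\kappa z_i x_i y_i \to \pi$ for every $w$ on $x_i z_i$; since the rescaled curvature bound tends to $0$ this passes to the limit as $\angle \tilde\sigma(t) \tilde x \tilde y = \pi$ for every $t>0$, so $\tilde\sigma$ extends the segment $\tilde y \tilde x$ past $\tilde x$.

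Following Theorem \ref{thm:rigidity1}, moving along $\tilde\sigma$ past $\tilde x$ strictly increases the angle at $o$ away from $\xi$ beyond $\angle(\xi,\eta)$, producing $\zeta \in \Sigma_p$ with $\angle(\xi, \zeta) > \diam(\Sigma_p)$, a contradiction. Hence $\diam(\Sigma_p) = \pi$, the splitting theorem gives $T_p M \cong \mathbb R \times T'$, and iterating the same argument inside the perpendicular cone $T'$ yields $T_p M \cong \mathbb R^n$ and $\Sigma_p = \mathbb S^{n-1}$. The main obstacle will be the diagonal extraction and rate control ensuring that the limit in the tangent cone is an honest ray rather than a bounded segment, together with tracking the $\kappa$-comparison-angle monotonicity as the curvature bound scales to zero under the blow-up; once these are in place, the remainder is a direct transcription of the proof of Theorem \ref{thm:rigidity1}.
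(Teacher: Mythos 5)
You have proved the wrong statement. The item you were asked about is a \emph{conjecture}, and its content is quantitative: that the lower bounds of Theorem \ref{thm:main-cpt1}, Corollary \ref{cor:cpt3} and Theorem \ref{thm:main-noncpt} (i.e.\ $\widetilde{\rm ob}(M) > \epsilon_{n,D}(v)$ for non-collapsed compact $M$, $\epsilon_n(\tilde v(M)) \le \widetilde{\rm ob}(M)$, and $\epsilon_n(v_\infty(M)) \le \widetilde{\rm ob}_\infty(M)$) should survive when the angle-based obtuse constants are replaced by the comparison ones. Your argument instead starts from the hypothesis $\widetilde{\rm ob}_\infty(M)=\pi/2$ and concludes that every $\Sigma_p$ is $\mathbb S^{n-1}$ or $\mathbb S^{n-1}_+$; that is the rigidity theorem stated immediately before the conjecture (the noncompact analogue of Theorem \ref{thm:rigidity1}), which the authors already settle by the same argument as Theorem \ref{thm:rigidity1}. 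Normalized volume, volume growth, collapsing, and the functions $\epsilon_{n,D}$, $\epsilon_n$ never enter your proposal, so nothing in it can produce the conjectured inequalities. Note also that the paper offers no proof of the conjecture; it only records, via \eqref{eq:trivial1} and \eqref{eq:trivial2}, the trivial direction that Theorem \ref{thm:main-cpt2} and the upper bound in Theorem \ref{thm:main-noncpt} carry over to $\widetilde{\rm ob}$ and $\widetilde{\rm ob}_\infty$.

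It is also worth seeing why the conjecture cannot be obtained by the kind of transplantation you perform. In the proof of Theorem \ref{thm:cpt case}, the contradiction hypothesis ${\rm ob}(p;q)\le\epsilon/2$ bounds \emph{true} angles at far points; the argument then uses \eqref{eq:angle vs comp angle} ($\angle\ge\wangle$) to pinch comparison angles, and it crucially needs the upper bound on $\angle(\Uparrow_p^q,\uparrow_p^x)$ to place $\log_p^{-D^2}(x)$ inside the thin annular set $A_T$ of the tangent cone, whose measure is controlled by Lemma \ref{lem:theta}. If one only assumes $\widetilde{\rm ob}(p,q)\le\epsilon/2$, one has an upper bound on $\wangle xpq$, and the inequality $\wangle\le\angle$ goes the wrong way: at a singular point the true angle may exceed the comparison angle substantially, so the inclusion $M-B(p,R_M/2+\delta)\subset \mathrm{gexp}_p^{-D^2}(A_T)$ is lost and the volume contradiction evaporates. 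Closing this gap (or finding a different mechanism relating comparison-angle obtuseness to non-collapsing) is exactly the open content of the conjecture, and your proposal does not engage with it.
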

Note that from the trivial inequalities \eqref{eq:trivial1} and \eqref{eq:trivial2}, Theorem \ref{thm:main-cpt2} and the last inequality of Theorem \ref{thm:main-noncpt} are ture even for $\widetilde{\rm ob}(M)$ and $\widetilde{\rm ob}_\infty(M)$.

\par\noindent
\section{Comparison $\kappa$-obtuse constants from infinity}　\label{sec:kappa-obtuse}

We conclude the paper with some comments on another definition 
of ``comparison obtuse constant from infinity'' for noncompact spaces which does depend on the 
lower curvature bound.

Let $M$ be a complete noncompact Alexandrov space with 
curvature $\ge \kappa$, and $p\neq q\in M$.
Using our previous definition of 
$\widetilde{\rm ob}_{\kappa,\infty}(p,q)$, 
set 
\[
  \widetilde{\rm ob}_{\kappa,\infty}(M):= \inf_{p\neq q}\,  \widetilde{\rm ob}_{\kappa,\infty}(p,q).
\]
which we call the {\it comparison $\kappa$-obtuse constant of $M$ from infinity}.
Note that
\[
          \widetilde{\rm ob}_{\kappa,\infty }(M) \le \widetilde{\rm ob}_{\infty }(M)\le {\rm ob}_{\infty }(M).
\]
Clearly the $\kappa$-obtuse constant from infinity does depend on the choice of the lower curvature bound $\kappa$, and 
$\widetilde{\rm ob}_{0, \infty}(M)\ge 0$ for $\kappa=0$.
However if  $\kappa <0$, the $\kappa$-obtuse constant from infinity could be negative.
For instance, if $M$ is the domain bounded by an ideal triangle all of whose vertexes are on the ideal boundary of the hyperbolic plane $\mathbb H^2(-1)$.
Then $\widetilde{\rm ob}_{-1,\infty}(M)=-\pi/2$.

This invariant seems interesting in itself.
For instance, we have the following strong rigidity.

\begin{theorem} \label{thm:rigidity}
Let $M$ be a complete noncompact Alexandrov $n$-space with nonnegative curvature 
satisfying $\widetilde{\rm ob}_{0,\infty} (M) = \pi/2$.
If $M$ has no  boundary, then $M$ is isometric to the Euclidean space 
        $\mathbb R^{n}$.
\end{theorem}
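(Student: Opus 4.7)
The plan is to prove that every pair of distinct points of $M$ lies on a line, and then to iterate the splitting theorem. First, I observe that $\widetilde{\rm ob}_{0,\infty}(p,q)\le\pi/2$ for every $p\neq q$, so the hypothesis $\widetilde{\rm ob}_{0,\infty}(M)=\pi/2=\inf_{p\neq q}\widetilde{\rm ob}_{0,\infty}(p,q)$ forces $\widetilde{\rm ob}_{0,\infty}(p,q)=\pi/2$ for every distinct pair $p,q\in M$. Hence there exists a sequence $x_i\to\infty$ with $\max\{\tilde\angle_0 x_ipq,\tilde\angle_0 x_iqp\}\to\pi$, and after relabeling $p,q$ and passing to a subsequence one may assume $\tilde\angle_0 x_ipq\to\pi$. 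By the Euclidean cosine rule this is equivalent to $|q,x_i|-|p,x_i|\to|p,q|$. A further subsequence of the minimizing segments $px_i$ converges by Arzel\`a--Ascoli to a ray $\gamma\colon[0,\infty)\to M$ emanating from $p$, and the inequality $\angle(\uparrow_p^{x_i},\uparrow_p^q)\ge\tilde\angle_0 x_ipq\to\pi$ combined with compactness of $\Sigma_p$ forces $\angle(\gamma'(0),\uparrow_p^q)=\pi$. Consequently the Busemann function $b_\gamma$ satisfies $b_\gamma(p)=0$ and $b_\gamma(q)=-|p,q|$, and since $b_\gamma$ is $1$-Lipschitz it decreases linearly along any minimizing geodesic from $p$ to $q$. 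Concatenating this segment (reversed) with $\gamma$ produces a curve $\sigma\colon\mathbb R\to M$ through $p$ and $q$ satisfying $|\sigma(s),\sigma(t)|=|s-t|$ for all $s,t$; that is, $\sigma$ is a line in $M$ containing both $p$ and $q$.

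Next, I invoke the Cheeger--Gromoll type splitting theorem for Alexandrov spaces of nonnegative curvature, as used in the proof of Theorem~\ref{thm:rigid}. Iterating, one obtains a maximal isometric splitting $M=\mathbb R^k\times N$ with $1\le k\le n$, where $N$ is a complete nonnegatively curved Alexandrov space without boundary (inherited from $M$) and $N$ contains no line. It suffices to show $k=n$, for then $N$ is a single point and $M$ is isometric to $\mathbb R^n$.

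Assume for contradiction that $k<n$, so $\dim N\ge 1$ and one can choose distinct points $p',q'\in N$. Setting $p=(0,p')$ and $q=(0,q')$ in $M$, the first step yields a line $\sigma\colon\mathbb R\to M$ through $p$ and $q$, with $\sigma(t_0)=p$ and $\sigma(t_1)=q$ for some $t_0<t_1$. Writing $\sigma(t)=(\alpha(t),\beta(t))$ in the product decomposition, $\alpha$ and $\beta$ are constant speed geodesics with $|\alpha'|^2+|\beta'|^2=1$, and the identity $|\sigma(s),\sigma(t)|^2=(s-t)^2$ combined with $|\alpha(s),\alpha(t)|=|s-t|\,|\alpha'|$ forces $|\beta(s),\beta(t)|=|s-t|\,|\beta'|$ for all $s,t\in\mathbb R$. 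Since $\beta(t_0)=p'\neq q'=\beta(t_1)$, $\beta$ is non-constant; after rescaling it is a line in $N$, contradicting the construction of $N$.

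The main obstacle is the first step: extracting the limit ray $\gamma$ and verifying that $\gamma$ together with a segment $qp$ forms a globally minimizing line. The key technical input is the passage from $\tilde\angle_0 x_ipq\to\pi$ to $\angle(\gamma'(0),\uparrow_p^q)=\pi$, which uses the universal inequality $\tilde\angle\le\angle$ and compactness of $\Sigma_p$; the line property then follows from the $1$-Lipschitz Busemann function argument. The subsequent splitting and contradiction arguments are essentially formal, the only point to verify being that the iterated splitting preserves the absence of boundary.
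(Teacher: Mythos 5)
Your key first step does not deliver what the rest of the proof needs. From $\widetilde{\rm ob}_{0,\infty}(p,q)=\pi/2$ you correctly extract a sequence $x_i\to\infty$ with (after relabeling) $\tilde\angle_0 x_ipq\to\pi$, equivalently $|q,x_i|-|p,x_i|\to|p,q|$, and the limit $\gamma$ of the segments $px_i$ satisfies $|q,\gamma(t)|=|p,q|+t$, so the concatenation of a segment $qp$ with $\gamma$ is minimizing. But that concatenation is a curve defined on $[-|p,q|,\infty)$, i.e.\ a geodesic \emph{ray} emanating from $q$, not a map $\sigma\colon\mathbb R\to M$: nothing in the hypothesis produces an extension past $q$. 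The definition of $\widetilde{\rm ob}_{0,\infty}(p,q)$ takes a maximum over the two endpoints, and that maximum may be realized at the same endpoint for every $x$, so only a one-sided extension is guaranteed; this is exactly why the paper's maximal-case results (Theorems \ref{thm:maximal}, \ref{thm:ob infty = pi/2} and \ref{thm:rigid}) are phrased in terms of the one-side injectivity radius, and the surfaces of revolution of Theorem \ref{thm:Mongoldt} together with Remark \ref{rem:rigid0} show that an abundance of such rays is compatible with $M$ being non-flat. Without a genuine line the splitting theorem cannot be applied to $M$ at all, so already the first splitting in your decomposition $M=\mathbb R^k\times N$ is unjustified, and the remaining (otherwise correct) projection argument has nothing to act on. A secondary point: the inference from $\angle(\uparrow_p^{x_i},\uparrow_p^q)\to\pi$ to $\angle(\gamma'(0),\uparrow_p^q)=\pi$ is not just compactness of $\Sigma_p$; angles are only semicontinuous, so you must argue that initial directions converge when the geodesics do (e.g.\ by a hinge comparison), or better, avoid angles entirely via the distance estimate $|q,\gamma(t)|\ge\lim_i\bigl(|q,x_i|-|p,x_i|\bigr)+t$ used above.

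For contrast, the paper never tries to split $M$ itself. It passes to the asymptotic cone: the hypothesis forces ${\rm ob}_\infty(M)=\pi/2$, hence $v_\infty(M)>0$ by Theorem \ref{thm:main-noncpt}, so $(r_iM,p)$ converges to the $n$-dimensional cone $M_\infty=K(M(\infty))$. Assuming $\diam M(\infty)<\pi$, one approximates a diameter-realizing pair $\xi,\eta$ by points $x_i,y_i\in r_iM$, uses the hypothesis to obtain rays through $x_i,y_i$, and follows the limit ray in the cone far out: its direction seen from the vertex eventually makes an angle with $\xi$ exceeding $\diam M(\infty)$, a contradiction obtained from rays alone. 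Then $\diam M(\infty)=\pi$ yields a line through the vertex of $M_\infty$, the splitting theorem is applied to $M_\infty$ and iterated to give $M_\infty\cong\mathbb R^n$, and $M\cong\mathbb R^n$ follows from rigidity at maximal volume growth. If you wish to keep your direct strategy, you would need to prove the backward extension past $q$, which does not follow from a single pair and appears to require essentially the cone argument.
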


\begin{proof}
Take $r_i\to 0$ and consider the pointed Gromov-Hausdorff convergence
$(r_i M, p) \to (M_{\infty}, o)$, where $M_{\infty}$ is the asymptotic cone,
which is isometric to the  the Euclidean cone $K(M(\infty))$ over the ideal boundary $M(\infty)$.
By Theorem \ref{thm:main-noncpt}, $v_{\infty}(M)>0$ and hence $\dim M_{\infty}=n$.
It suffices to show that $M_{\infty}$ is isometric to $\mathbb R^n$.
First we show that $\diam(M(\infty))=\pi$. Suppose  $\diam(M(\infty))<\pi$ and take
$\xi,\eta\in M(\infty)$ with $|\xi,\eta|=\diam(M(\infty))$.
We identify $M(\infty)$ as $M(\infty)\times 1\subset M_{\infty}$,
and take $x_i, y_i\in r_iM$ such that $x_i\to\xi$, $y_i\to\eta$ under the convergence $(r_iM, p) \to (M_{\infty}, o)$.
 From the assumption, we may assume that there is a geodesic ray $\gamma_i$ emanating from $x_i$ through $y_i$.
Passing to a subsequence, we may also  assume that $\gamma_i$ converges to a geodesic ray
$\gamma_{\infty}$ in $M_{\infty}$ emanating from $\xi$ through $\eta$. 
Obviously we can find a point $z$ on $\gamma_{\infty}$ such that the direction $\zeta=\uparrow_o^z$ 
satisfies $|\xi,\zeta|>|\xi,\eta|$. Since this is a contradiction, we have 
      $\diam(M(\infty))=\pi$.
By the splitting theorem,  $M_{\infty}$ is isometric to a product $M_{\infty}'\times\mathbb R$.
Repeating the argument to  $M_{\infty}'$, we see that  $M_{\infty}'$ is isometric to a product $M_{\infty}''\times\mathbb R$.
In this way, we conclude that  $M_{\infty}$ is isometric to $\mathbb R^n$.
\end{proof}


\begin{thebibliography}{9999}

\bibitem{BGP} Yu.~Burago, M.~Gromov, and G.~Perel'man.
A. D. Aleksandrov spaces with curvatures bounded below, 
Uspekhi Mat. Nauk 47 (1992), no. 2(284), 3--51, 222, 
translation in Russian Math. Surveys 47 (1992), no. 2, 1--58


\bibitem{Ch} J.~Cheeger. Finiteness theorems for Riemannnian manifolds, 
Amer. J. Math., 92(1970), 61-74.

\bibitem{Ch1} 
J.~Cheeger. 
Critical points of distance functions and applications to geometry, 
Geometric Topology: Recent Developments, pp.~1--38, 
vol.~1504 (1991), Springer. 


\bibitem{GP} K.~Grove and P.~Petersen. Bounding homotopy types by geometry, 
Ann. of Math. 128(1988), 195-206.

\bibitem{GPW} K.~Grove, P.~Petersen and J. Y. ~Wu. 
Geometric finiteness theorems via controlled topology,
Invent. Math. 99(1990), 205-213.

\bibitem{Kar} M.~Karmanova. 
Rectifiable sets and coarea formula for metric-valued mappings,
J. Func. Analysis,  254(2008), 1410-1447.

\bibitem{Ma} M.~Maeda. 
A geometric significance of total curvature on complete open surfaces.  
Geometry of geodesics and related topics (Tokyo, 1982), 
Adv. Stud. Pure Math., 3, North-Holland, Amsterdam, 1984, pp. 451-458.

\bibitem{OS} Y.~Otsu and T.~Shioya.
The Riemannian structure of Alexandrov spaces, 
J. Differential Geom. 39 (1994), no. 3, 629--658

\bibitem{PerPet:ext} G.~Perel'man and  A.~Petrunin.
Extremal subsets in Aleksandrov spaces and the generalized Liberman theorem. (Russian) Algebra i Analiz  5  (1993),  no. 1, 242--256;  translation in  St. Petersburg Math. J. 5  (1994), 215--227.

\bibitem{PP} 
G.~Perelman and A.~Petrunin.
Quasigeodesics and gradient curves in Alexandrov spaces, preprint.


\bibitem{Pet} 
A.~Petrunin. 
Semiconcave functions in Alexandrov's geometry.
Surveys in Comparison Geometry, 2007.


\bibitem{Plaut}
C.~Plaut. Spaces of Wald-Berstovskii curvature bounded below, J. Geom. Anal. 6, 113--134 (1996).

\bibitem{SST} 
K.~Shiohama, T.~Shioya and M.~Tanaka.
The geometry of total curvature on complete open surfaces. 
Cambridge Tracts in Mathematics, 159. Cambridge University Press, Cambridge, 2003


\bibitem{Shio:ideal} T.~Shioya.
The ideal boundaries of complete open surfaces. Tohoku Math. J.  43  (1991),  no. 1, 37--59. 


\bibitem{Y:conv} T.~Yamaguchi.
A convergence theorem in the geometry of Alexandrov spaces. 
Actes de la Table Ronde de Geometrie Differentielle (Luminy, 1992), 
Semin. Congr., vol. 1, Soc. Math. France, Paris, 1996, pp. 601--642

\end{thebibliography}
\end{document}